\makeindex \setcounter{tocdepth}{2}
\theoremstyle{plain}
\newtheorem{theorem}{Theorem}[section]
\newtheorem{corollary}[theorem]{Corollary}
\newtheorem{lemma}[theorem]{Lemma}
\theoremstyle{definition}
\newtheorem{remark}[theorem]{Remark}
\newtheorem{example}[theorem]{Example}
\def\bA{\mathbb{A}}
\def\bC{\mathbb{C}}
\def\bF{\mathbb{F}}
\def\bP{\mathbb{P}}
\def\bQ{\mathbb{Q}}
\def\bZ{\mathbb{Z}}
\def\bR{\mathbb{R}}
\def\A{\mathbf{A}}
\def\C{\mathbf{C}}
\def\D{\mathbf{D}}
\def\F{\mathbf{F}}
\def\G{\mathbf{G}}
\def\H{\mathbf{H}}
\def\I{\mathbf{I}}
\def\J{\mathbf{J}}
\def\P{\mathbf{P}}
\def\S{\mathbf{S}}
\def\Z{\mathbf{Z}}
\def\cA{\mathcal{A}}
\def\cB{\mathcal{B}}
\def\cC{\mathcal{C}}
\def\cD{\mathcal{D}}
\def\cP{\mathcal{P}}
\def\cQ{\mathcal{Q}}
\def\cS{\mathcal{S}}
\def\cX{\mathcal{X}}
\def\cY{\mathcal{Y}}
\def\cZ{\mathcal{Z}}
\def\cU{\mathcal{U}}
\def\cV{\mathcal{V}}
\def\cT{\mathcal{T}}
\def\Br{\mathrm{Br}}
\begin{document}

\title{Algebraic families of hyperelliptic curves violating the Hasse principle}

\author{Nguyen Ngoc Dong Quan}

\date{July 27, 2014}

\address{Department of Mathematics \\
         University of British Columbia \\
         Vancouver, British Columbia \\
         V6T 1Z2, Canada}

\email{\href{mailto:dongquan.ngoc.nguyen@gmail.com}{\tt dongquan.ngoc.nguyen@gmail.com}}

\subjclass[2010]{Primary 14G05, 11G35, 11G30}

\keywords{Azumaya algebras, Brauer groups, Brauer-Manin obstruction, Hasse principle, Hyperelliptic curves}

\maketitle

\tableofcontents

\begin{abstract}

In $2000$, Colliot-Th\'el\`ene and Poonen \cite{colliot-thelene-poonen} showed how to construct algebraic families of genus one curves violating the Hasse principle. Poonen \cite{poonen1} explicitly constructed an algebraic family of genus one cubic curves violating the Hasse principle using the general method developed by Colliot-Th\'el\`ene and himself. The main result in this paper generalizes the result of Colliot-Th\'el\`ene and Poonen to arbitrarily high genus hyperelliptic curves. More precisely, for $n > 5$ and $n \not\equiv 0 \pmod{4}$, we show that there is an algebraic family of hyperelliptic curves of genus $n$ that is counterexamples to the Hasse principle explained by the Brauer-Manin obstruction.

\end{abstract}

\section{Introduction}
\label{Section-Introduction}

The aim of this article is to prove the following result.

\begin{theorem}[Theorem \ref{algebraic-family-hyperelliptic-curve-hp-theorem}]
\label{Theorem-The-main-theorem}

Let $n > 5$ be an integer such that $n \not\equiv 0 \pmod{4}$. Then there is an algebraic family $\cC_t$ of hyperelliptic curves of genus $n$ such that $\cC_t$ is a counterexample to the Hasse principle explained by the Brauer-Manin obstruction for all $t \in \bQ$. Furthermore, $\cC_t$ contains no zero-cycles of odd degree over $\bQ$ for every $t \in \bQ$.

\end{theorem}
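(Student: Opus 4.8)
The plan is to write the family down explicitly, extending Poonen's genus-one construction by enlarging the defining polynomial with factors that are norm forms for a well-chosen quadratic field, and then to run a Brauer--Manin computation uniform in the parameter. Starting from (a member of) Poonen's family of everywhere locally soluble genus-one hyperelliptic curves $y^2=c(t)\,g_t(x)\,h_t(x)$ whose Hasse-principle failure is realised by the quaternion algebra $\bigl(g_t(x),\,d_t\bigr)$ — here $g_t,h_t$ are coprime factors of the defining polynomial, $g_t$ of even degree, and $d_t\in\bQ(t)^\times$ — I would set
\[
\cC_t:\qquad y^2 \;=\; c(t)\,g_t(x)\,h_t(x)\prod_{j=1}^{k}\bigl(x^2-d_t\,s_j^2\bigr),
\]
with $s_1,\dots,s_k\in\bQ(t)^\times$ and $k$ chosen so that $\deg\bigl(c(t)g_th_t\prod_j(x^2-d_ts_j^2)\bigr)$ gives genus exactly $n$, and take $\cA_t=\bigl(g_t(x),\,d_t\bigr)\in\Br\bigl(\bQ(t)(\cC_t)\bigr)$ as the obstructing class. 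First I would verify that $\cA_t$ lies in $\Br(\cC_t)$: on $\bP^1_x$ the algebra $(g_t(x),d_t)$ is unramified at $x=\infty$ because $\deg g_t$ is even, and is ramified only along the zeros of $g_t$; after pulling back to $\cC_t$ these residues vanish since, $c(t)g_th_t\prod_j(x^2-d_ts_j^2)$ being squarefree (so that $\cC_t$ is smooth), the divisors over the zeros of $g_t$ are ramification points and $g_t(x)$ there acquires even valuation. Nontriviality of the class modulo $\Br(\bQ(t))$ will follow a posteriori from the invariant computation.

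Next — and this is what I expect to be the main obstacle — I would prove that $\cC_t(\bQ_v)\neq\emptyset$ for every place $v$ of $\bQ$ and every $t\in\bQ$, uniformly. At the archimedean place the sign of $c(t)$ and the shape of the factors are arranged so that the right-hand side takes a positive value. For a finite prime $p$ exceeding a bound depending only on $n$, the Weil estimates for the smooth projective model of $\cC_t$ over $\bF_p$ supply a smooth $\bF_p$-point which lifts by Hensel's lemma; the delicate case is a prime $p$ of bad reduction, where one must still exhibit a smooth $\bF_p$-point, i.e. show that $c(t)g_th_t\prod_j(x^2-d_ts_j^2)$ does not reduce mod $p$ to a unit times a square and admits a simple root in a suitable sense. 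Carrying this out uniformly in $t$, together with the analysis at $2$ and the finitely many other small primes and the reconciliation at $x=\infty$, is precisely where the numerical hypotheses $n>5$ and $n\not\equiv 0\pmod 4$ and the freedom in the choice of $s_1,\dots,s_k$ and $c(t)$ get spent.

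Granting everywhere local solubility, the invariant computation collapses to that of the seed. For $(x_0,y_0)\in\cC_t(\bQ_v)$, dividing the curve equation by $c(t)h_t(x_0)\prod_j(x_0^2-d_ts_j^2)$ gives, in $\Br(\bQ_v)[2]$,
\[
\bigl(g_t(x_0),d_t\bigr)_v \;=\; \bigl(c(t)\,h_t(x_0),\,d_t\bigr)_v \;+\; \sum_{j=1}^{k}\bigl(x_0^2-d_t s_j^2,\,d_t\bigr)_v ,
\]
and each summand $\bigl(x_0^2-d_ts_j^2,d_t\bigr)_v$ vanishes because $x_0^2-d_ts_j^2=N_{\bQ(\sqrt{d_t})/\bQ}(x_0-s_j\sqrt{d_t})$ is a local norm. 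Hence $\inv_v\cA_t(P_v)$ equals the seed invariant: it is constant on $\cC_t(\bQ_v)$, is $\tfrac12$ at an odd number of places and $0$ elsewhere, so $\sum_v\inv_v\cA_t(P_v)=\tfrac12$ for every adelic point $(P_v)$. Combined with local solubility, this shows $\cC_t$ is a counterexample to the Hasse principle explained by the Brauer--Manin obstruction, for every $t\in\bQ$.

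Finally, the displayed identity and the norm-form vanishing are insensitive to base change, so the same computation, applied over the completions of the residue field of a closed point $P$ of $\cC_t$, shows that the local invariants of $\cA_t$ evaluated at $P$ sum to $(\deg P)\cdot\tfrac12$. Extending by linearity, any zero-cycle $z$ on $\cC_t$ defined over $\bQ$ would satisfy $\sum_v\inv_v\bigl(\mathrm{cor}\,\cA_t(z)\bigr)=(\deg z)\cdot\tfrac12$, which must vanish by the reciprocity law of global class field theory. Therefore $\deg z$ is even, i.e. $\cC_t$ carries no zero-cycle of odd degree over $\bQ$, which completes the proof.
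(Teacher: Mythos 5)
Your plan is genuinely different from the paper's: the paper never computes the Brauer--Manin obstruction on the curve itself. Instead it builds on the Colliot-Th\'el\`ene--Coray--Sansuc threefold $\cY_{(5,1,1)}$, constructs a family of threefolds $\cY$ in $\bP^5_{\bQ}$ with $\cY(\bA_{\bQ})^{\Br}=\emptyset$ and $\cY(\bQ)=\emptyset$, smoothly embeds the hyperelliptic curves into $\cY$ via the Coray--Manoil construction, and pulls back the empty Brauer set by functoriality (Lemma \ref{functoriality-azumaya-lemma}); the ``no zero-cycles of odd degree'' claim then falls out from the Amer--Brumer theorem applied to the ambient intersection of two quadrics. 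Your approach tries to do everything intrinsically on the curve, which is attractive in principle, but it has a genuine gap precisely at the step you treat as automatic.

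The decisive gap is in the invariant computation. You correctly deduce, for $(x_0,y_0)\in\cC_t(\bQ_v)$, that
$\bigl(g_t(x_0),d_t\bigr)_v=\bigl(c(t)h_t(x_0),d_t\bigr)_v$, because the norm-form factors $(x_0^2-d_ts_j^2,d_t)_v$ vanish. But you then assert ``hence $\inv_v\cA_t(P_v)$ equals the seed invariant: it is constant on $\cC_t(\bQ_v)$.'' That does not follow. The identity $\bigl(g_t(x_0),d_t\bigr)_v=\bigl(c(t)h_t(x_0),d_t\bigr)_v$ holds on \emph{both} curves; it is a relation, not a value. The constancy on the seed curve is proved using the seed equation, which constrains $x_0$ to lie in
$\{x_0 : c(t)g_t(x_0)h_t(x_0)\in\bQ_v^{\times 2}\}$. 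On $\cC_t$ the admissible set is
$\{x_0 : c(t)g_t(x_0)h_t(x_0)\prod_j(x_0^2-d_ts_j^2)\in\bQ_v^{\times 2}\}$, a \emph{different} set, and for $x_0$ in the second set but not the first you know nothing about $\bigl(g_t(x_0),d_t\bigr)_v$. The seed computation typically pins down, at the critical prime, the parity of $v_\ell\bigl(g_t(x_0)\bigr)$ by a case analysis that uses the whole defining polynomial; once you insert the extra factors $\prod_j(x^2-d_ts_j^2)$, that analysis must be redone. (The norm-form trick removes the Hilbert symbols $\bigl(x_0^2-d_ts_j^2,d_t\bigr)_v$ from the \emph{algebraic identity}, but it does not remove the factors' influence on which $x_0$ occur.) In the paper this is exactly the content of Step 3 of Theorem \ref{general-threefold-hp-theorem}, a long case-by-case Hilbert-symbol computation at $\ell=p$, $\ell=2$, and the other primes; it does not come for free.

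Two secondary gaps you should be aware of. First, you acknowledge local solvability at small primes as ``the main obstacle'' and defer it; in the paper this is the delicate part, handled with Hensel's lemma at $p$ and $2$ under the congruence conditions $(A4)$--$(A6)$, $(B1)$--$(B2)$, and it is precisely where the restrictions $n>5$, $n\not\equiv 0\pmod 4$ enter (the curves genuinely fail to be $2$-adically soluble when $4\mid n$; see Remark \ref{Remark-Existence-of-algebraic-families-of-curves-of-genus-n-with-n-divisible-by-4}). Second, you assert squarefreeness of the defining polynomial for every $t\in\bQ$ but do not prove it; the paper invests an entire separability criterion (Theorem \ref{the-criterion-for-separability-of-certain-polynomials-theorem} and Corollaries \ref{Corollary-The-separability-of-the-first-algebraic-family-of-hyperelliptic-curves}, \ref{Corollary-The-separability-of-the-second-algebraic-family-of-hyperelliptic-curves}) to guarantee this uniformly in $t$, and this is again where $n>5$ is used. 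Finally, as a minor point, Poonen's explicit family in \cite{poonen1} consists of plane cubic curves, not hyperelliptic curves $y^2=f(x)$, so the ``seed'' you invoke is not literally available in the form you quote.
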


Recall from \cite{poonen1} that an \textit{algebraic} family of curves is a family of curves depending on a parameter $T$ such that substituting any rational number for $T$ results in a smooth curve over $\bQ$.

A smooth geometrically irreducible curve $\cC$ over $\bQ$ is said to \textit{satisfy the Hasse principle} if the everywhere local solvability of $\cC$ is equivalent to the global solvability of $\cC$. In more concrete terms, this means that
\begin{align*}
\cC(\bQ) \ne \emptyset \; \; \text{if and only if} \; \; \cC(\bQ_p) \ne \emptyset \; \; \text{for every prime $p$ including $p = \infty$}.
\end{align*}
If $\cC$ has points locally everywhere but has no rational points, we say that $\cC$ is a \textit{counterexample to the Hasse principle}. Furthermore, if we also have $\cC(\bA_{\bQ})^{\Br} = \emptyset$, we say that $\cC$ is a \textit{counterexample to the Hasse principle explained by the Brauer-Manin obstruction}. The Hasse principle fails in general. The first counterexamples of genus one curves to the Hasse principle were discovered by Lind \cite{lind} in $1940$ and independently shortly thereafter by Reichardt \cite{reichardt}. For a basic introduction to the Brauer-Manin obstruction, see \cite{poonen3} and \cite{skorobogatov}.

Let us relate Theorem \ref{Theorem-The-main-theorem} to existing results in literature. For $n = 1$, Colliot-Th\'el\`ene and Poonen \cite{colliot-thelene-poonen} showed how to produce one parameter families of curves of genus one violating the Hasse principle. Poonen \cite{poonen1} explicitly constructed an algebraic family of genus one cubic curves violating the Hasse principle using the general method developed in \cite{colliot-thelene-poonen}. It is not known that there exists an algebraic family of curves of genus $n$ violating the Hasse principle for all $n \ge 2$.

Here, as throughout the article, we say that a smooth geometrically irreducible variety $\cV$ over $\bQ$ is said to \textit{satisfy HP1} if it is a counterexample to the Hasse principle explained by the Brauer-Manin obstruction, and that a smooth geometrically irreducible variety $\cV$ over $\bQ$ is said to \textit{satisfy HP2} if it contains no zero-cycles of odd degree over $\bQ$.

Coray and Manoil \cite{coray-manoil} showed that for each positive integer $n \ge 2$, the smooth projective model of the affine curve defined by
\begin{align}
\label{the-Coray-Manoil-smooth-model-equation}
z^2 = 605\cdot 10^6x^{2n + 2} + (18x^2 - 4400)(45x^2- 8800)
\end{align}
satisfies HP1 and HP2. The Coray-Manoil family of curves is the first family of hyperelliptic curves of varying genus that satisfies HP1 and HP2. Although Coray and Manoil restricted themselves to constructing only one hyperelliptic curve of genus $n$ satisfying HP1 and HP2 for each integer $n \ge 2$, it seems plausible that their approach can be modified to produce algebraic families of hyperelliptic curves of arbitrary genus satisfying HP1 and HP2. Since we will follow the approach of Coray and Manoil with some modifications to prove Theorem \ref{Theorem-The-main-theorem}, we briefly recall their main ideas of constructing the family $(\ref{the-Coray-Manoil-smooth-model-equation})$.

Colliot-Th\'el\`ene, Coray and Sansuc \cite{colliot-thelene-coray-sansuc} proved that the threefold $\cY_{(5, 1, 1)}$ in $\bP^5_{\bQ}$ defined by
\begin{align*}
\cY_{(5, 1, 1)} :
\begin{cases}
u_1^2 - 5v_1^2 &= 2xy \\
u_2^2 - 5v_2^2 &= 2(x + 20y)(x + 25y)
\end{cases}
\end{align*}
satisfies HP1 and HP2. Building on this result, Coray and Manoil \cite{coray-manoil} introduced a geometric construction of hyperelliptic curves that allows to smoothly embed the family of curves defined by $(\ref{the-Coray-Manoil-smooth-model-equation})$ into the threefold $\cY_{(5, 1, 1)}$. It follows immediately from functoriality that the Coray-Manoil family of curves satisfies HP1 and HP2.

In order to generalize the result of Coray and Manoil, we first construct a family of threefolds in $\bP^5_{\bQ}$ that satisfies HP1 and HP2 has the threefold $\cY_{(5, 1, 1)}$ as its member. The construction of such threefolds is achieved building on that of the threefold $\cY_{(5, 1, 1)}$. In order to show that the Brauer-Manin obstruction for these threefolds is non-empty, we also need to show the existence of infinitely many primes $p$ and $q$ satisfying certain quadratic equations. We do this by calling on the result of Iwaniec \cite{iwaniec} that a quadratic polynomial in two variables represents infinitely many primes. Since the existence of certain threefolds in $\bP^5_{\bQ}$ satisfying HP1 and HP2 is of interest in its own right, we state this result here.

\begin{theorem}
\label{Theorem-The-second-main-theorem}

Let $p$ be a prime such that $p \equiv 5 \pmod{8}$ and $3$ is quadratic non-residue in $\bF_p^{\times}$. Then there exist infinitely many couples $(b, d) \in \bZ^2$ such that any smooth and proper $\bQ$-model $\cZ$ of the smooth $\bQ$-variety $\cX$ in $\bA^5_{\bQ}$ defined by
\begin{align*}
\begin{cases}
0 \ne u_1^2 - pv_1^2 &= 2x \\
0 \ne u_2^2 - pv_2^2 &= 2(x + 4pb^2)(x + p^2d^2)
\end{cases}
\end{align*}
satisfies HP1 and HP2.

\end{theorem}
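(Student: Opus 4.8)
The plan is to mimic the structure of the Colliot-Th\'el\`ene--Coray--Sansuc argument for $\cY_{(5,1,1)}$, replacing the fixed constant $5$ by the prime $p$ and the linear forms $x,x+20y,x+25y$ by $x,x+4pb^2,x+p^2d^2$ (after setting $y=1$), and then showing that for suitable $(b,d)$ the same kind of Brauer class obstructs rational points. First I would fix an Azumaya algebra $\cA$ on a smooth proper model $\cZ$ of $\cX$: the natural candidate is the quaternion class $\cA=(p, u_1^2-pv_1^2)=(p,2x)$, which extends to a class in $\Br(\cZ)$ because on the open locus it agrees with $(p,2(x+4pb^2)(x+p^2d^2))$ up to products of norms from $\bQ(\sqrt p)$, and one checks it is unramified along every component of the boundary and of the bad fibres. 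The key local computation is then to evaluate $\mathrm{inv}_v\,\cA(P_v)$ for $P_v\in\cZ(\bQ_v)$: at all places $v\neq 2,p,\infty$ and $v$ not dividing the resultants of the three linear forms, $p$ and $2x$ are both units or the valuation is even, so the invariant vanishes; at $v=\infty$ it vanishes since $p>0$; and the hypotheses $p\equiv 5\pmod 8$ and $\left(\tfrac3p\right)=-1$ are exactly what is needed to force $\sum_v\mathrm{inv}_v\cA(P_v)=1/2$ at $v\in\{2,p\}$ for every adelic point, so $\cZ(\bA_\bQ)^{\Br}=\emptyset$, giving HP1 once local solvability everywhere is checked.

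The conditions on $(b,d)$ enter precisely in verifying everywhere local solvability and in pinning down the local invariants at $2$ and $p$. Concretely, I would require $b,d$ to satisfy congruence conditions modulo a fixed power of $2p$ (and sign/size conditions) so that: (i) the quadrics $u_i^2-pv_i^2$ take the prescribed values locally at $2$ and $p$ with the right Hilbert-symbol data; (ii) at every other prime $\ell$ one of the two conic bundles has a smooth $\bQ_\ell$-point, yielding $\cX(\bQ_\ell)\neq\emptyset$; (iii) the boundary divisors remain disjoint and reduced so that $\cZ$ is genuinely smooth and the Brauer class is well-defined and constant on residue discs. That such $(b,d)$ exist \emph{in infinitely many} congruence classes is automatic once the congruence conditions are shown to be satisfiable by a single pair, because the conditions are open in the $2p$-adic topology; no deep input (in particular, no prime-producing result) is needed here — Iwaniec's theorem is reserved for the later construction of the full family of threefolds and for the genus-$n$ curves, not for this existence statement.

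For HP2, the standard argument is that a zero-cycle of odd degree on $\cZ$ would, via the corestriction/norm of local invariants, produce $\sum_v\mathrm{inv}_v\cA(z_v)\in\tfrac12\bZ$ equal to $\tfrac12\cdot(\text{odd})=\tfrac12$ globally, contradicting the reciprocity law $\sum_v\mathrm{inv}_v=0$; equivalently, the order-$2$ Brauer class obstructs not just rational points but all odd-degree zero-cycles because evaluation on a degree-$d$ point of a field extension multiplies the invariant by $d$. So the same class $\cA$ that gives HP1 gives HP2, and one only has to note that $\cA$ evaluated at any closed point lands in $\tfrac12\bZ/\bZ$ with the parity controlled by the degree.

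The main obstacle I expect is (ii) above: verifying $\cX(\bQ_\ell)\neq\emptyset$ for the finitely many ``bad'' primes $\ell$ dividing the discriminant-type data (those dividing $b$, $d$, $b^2-pd^2$-type resultants, etc.) while \emph{simultaneously} keeping $\cZ$ smooth and the Brauer invariants at $2$ and $p$ equal to the forced values. This is a balancing act: strengthening the congruences on $(b,d)$ to fix the invariants at $2,p$ can create new bad primes or conflict with local solvability elsewhere, so one must organize the conditions so that the "dangerous" primes are confined to $\{2,p\}$ and at every other prime a transparent smooth point (e.g.\ a point with $x$ a unit making $2x$ a square, or $x+4pb^2$ a square) is available. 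Once this bookkeeping is set up cleanly — essentially reproducing the CTS fibration-method lemma in this twisted family — the rest follows the template of \cite{colliot-thelene-coray-sansuc} and \cite{coray-manoil} without new ideas.
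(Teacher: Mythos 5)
Your proposed Azumaya algebra is the wrong one, and this is fatal. You take $\cA = (p, u_1^2 - pv_1^2) = (p, 2x)$. But on $\cX$ the equation $u_1^2 - pv_1^2 = 2x \ne 0$ says precisely that $2x$ is a nonzero norm from $\bQ(\cX)(\sqrt p)$, so $(p, 2x)$ is the \emph{zero} class in $\Br(\bQ(\cX))$. The same goes for $(p, 2(x+4pb^2)(x+p^2d^2))$. So both of your candidates are identically trivial and cannot produce any Brauer--Manin obstruction; all local invariants vanish and no contradiction arises. What the paper does (Lemma \ref{azumaya-threefold-lemma}) is take $\cA = (p, x + 4pb^2)$, which is \emph{not} a priori a norm: the two defining equations only say that $2x$ and the product $2(x+4pb^2)(x+p^2d^2)$ are norms, and the class $(p, x+4pb^2)$ sits ``in between'' and gives the genuinely order-$2$ element whose local invariants sum to $1/2$. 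You need to identify this class (or an equivalent one, e.g.\ $(p, x+p^2d^2)$ up to norms), check it is unramified by producing an explicit divisor with $\mathrm{div}(x+4pb^2) = \Gamma + \sigma\Gamma$ over $\bQ(\sqrt p)$, and then redo the local invariant computation with this class.

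Your claim that ``no prime-producing result is needed here'' for the infinitude of $(b,d)$ is also incorrect and misreads the structure of the argument. The condition imposed on $(b,d)$ in the paper (condition $(A2)$) is that $q := |pd^2 - 4b^2|$ be $1$ or an odd prime, together with some parities and non-vanishing modulo $3$ and $p$. The condition on $q$ is \emph{not} a congruence or open $2p$-adic condition on $(b,d)$; it is precisely the kind of statement you need a prime-representing theorem for, and Lemma \ref{threefolds-infinitude-triple-lemma} invokes Iwaniec's theorem on the quadratic polynomial $P(x,y) = p(2x+1)^2 - 4(6py+b_0)^2$ to produce infinitely many such $(b,d)$. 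The reason this arithmetic condition is essential is in Step 3 of the invariant computation at odd primes $\ell \ne p$ where $p$ is not a square in $\bQ_\ell^\times$: one has to rule out the possibility that both $x+4pb^2$ and $x+p^2d^2$ have positive $\ell$-adic valuation, which would force $\ell \mid p(pd^2 - 4b^2) = \pm pq$; having $q$ prime (or $1$) lets one show such $\ell$ would necessarily be a square mod $p$, a contradiction. Without the prime-producing input your control over the ``bad primes'' evaporates. So the bookkeeping obstacle you anticipated is real, but it is resolved by an arithmetic input you explicitly disclaimed.

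Finally, for HP2 your corestriction/reciprocity argument is a valid alternative in principle, but it requires extending the local invariant computation to points over arbitrary finite extensions, which is extra work you did not do. The paper sidesteps this entirely: since $\cT$ (the projective closure of $\cX$) is a smooth intersection of two quadrics in $\bP^5_\bQ$ with $\cT(\bQ)=\emptyset$, the Amer--Brumer theorem immediately gives that $\cT$ carries no zero-cycle of odd degree, and HP2 for $\cX$, $\cZ$ follows by functoriality. That is the cleaner route given the shape of the variety.
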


The next step is to choose a family of hyperelliptic curves of arbitrary genus that can be smoothly embed into the family of threefolds in Theorem \ref{Theorem-The-second-main-theorem} using the geometric construction of Coray and Manoil. For each $n \ge 2$, we define a family of hyperelliptic curves of genus $n$ of the shape
\begin{align}
\label{Equation-The-equation-of-hyperelliptic-curves-in-the-introduction}
z^2 = p\alpha^2Q^2 x^{2n + 2} + (2b^2Px^2 + \beta Q)(d^2pPx^2 + 2\beta Q),
\end{align}
where $\alpha, \beta, \gamma$ are certain rational numbers, and $P, Q$ depend on $\alpha, \beta, \gamma, p, b, d$. In order to apply the geometric construction of hyperelliptic curves of Coray and Manoil, the polynomials on the right-hand side of $(\ref{Equation-The-equation-of-hyperelliptic-curves-in-the-introduction})$ are required to be separable.

In order to smoothly embed these hyperelliptic curves into the threefolds in Theorem \ref{Theorem-The-second-main-theorem}, we impose certain conditions on $\alpha, \beta, \gamma$ such that these rational numbers satisfy certain local congruences and certain conics in $\bP^2_{\bQ}$ constructed from sextuples $(p, b, d, \alpha, \beta, \gamma)$ possess at least one non-trivial rational point. Lemma \ref{infinitude-alpha-beta-gamma-lemma} and Lemma \ref{Lemma-the-infinitude-of-p-b-d-alpha-beta-gamma-satisfying-A1-A5-and-B1} show that there are infinitely many sextuples $(p, b, d, \alpha, \beta, \gamma)$ satisfying these conditions. For any such sextuple $(p, b, d, \alpha, \beta, \gamma)$, it follows from functoriality and Theorem \ref{Theorem-The-second-main-theorem} that the family of hyperelliptic curves of genus $n$ defined by $(\ref{Equation-The-equation-of-hyperelliptic-curves-in-the-introduction})$ satisfies HP1 and HP2 for each $n \ge 2$.

In the last step, the main difficulty is to show the existence of rational functions in $\bQ(T)$ that parameterize rational numbers $\alpha, \beta, \gamma$ such that for each integer $n$, substituting any rational number for $T$ in the polynomials on the right-hand side of $(\ref{Equation-The-equation-of-hyperelliptic-curves-in-the-introduction})$ results in a separable polynomial of degree $2n + 2$ over $\bQ$. We do this by calling on a \textit{separability criterion} of the author \cite{dq-separable-polynomials} that will be reviewed in Section \ref{algebraic-families-hyperelliptic-curve-hp-section}.

After this article was finished, the author learned that Bhargava, Gross and Wang \cite{bhargava-gross-wang} showed that for any integer $n \ge 1$, there is a positive proportion of everywhere locally solvable hyperelliptic curves over $\bQ$ of genus $n$ that have no points over any number field of odd degree over $\bQ$. The main theorem of this article describes an explicit algebraic family of such curves of genus $n$ with $\gcd(n, 4) = 1$ and $n > 5$.

\section{The Hasse principle for certain threefolds in $\bP^5_{\bQ}$}
\label{threefold-hp-section}

In this section, we will construct families of threefolds satisfying HP1 and HP2. We begin by stating some lemmas that we will need in the proof of the main results throughout the paper.

\begin{lemma}$(\text{\cite[Lemma 4.8]{coray-manoil}})$
\label{functoriality-azumaya-lemma}

Let $k$ be a number field, and let $\cV_1$ and $\cV_2$ be (proper) $k$-varieties. Assume that there is a $k$-morphism $\alpha : \cV_1 \rightarrow \cV_2$ and $\cV_2(\bA_k)^{\mathrm{Br}} = \emptyset$. Then $\cV_1(\bA_k)^{\mathrm{Br}} = \emptyset$.

\end{lemma}

\begin{lemma}$(\text{\cite[Proposition 6.4]{corn-PLMS}})$
\label{bright-corn-lemma}

Let $\cX$ be a smooth $F$-variety. Let $L/F$ be a cyclic extension, and let $F(\cX)$ be the function field of $\cX$. Let $f$ be an element of $F(\cX)$, and let $\cX_L = \cX \times_F L$. Then the class of the cyclic algebra $\left(L/F, f\right) \in \Br(F(\cX))$ lies in the image of the inclusion $\Br(\cX) \hookrightarrow \Br(F(\cX))$ if and only if $\mathrm{div}(f) = \mathrm{Norm}_{L/F}(D)$ for some $D \in \mathrm{Div}(\cX_L)$.

\end{lemma}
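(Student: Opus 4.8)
We may assume $\cX$ is integral (treat connected components separately), and put $n = [L:F]$ and $K = F(\cX)$. The plan is to route the whole argument through residues. Since $\cX$ is smooth over $F$ it is regular, so $\Br(\cX) \hookrightarrow \Br(K)$, and by the purity theorem for Brauer groups of regular schemes (classical when $\dim \cX \le 2$, after Auslander--Goldman and Grothendieck, and due to Gabber in general) a class $A \in \Br(K)$ lies in the image of $\Br(\cX)$ if and only if its residue $\partial_x(A) \in H^1(\kappa(x), \bQ/\bZ)$ vanishes at every codimension-one point $x \in \cX^{(1)}$. Thus it suffices to compute $\partial_x\big((L/F,f)\big)$ for each $x$ and to match the resulting family of local conditions with the solvability of $\mathrm{div}(f) = \mathrm{Norm}_{L/F}(D)$.

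Next I would compute the residue explicitly. Let $\chi \in H^1(F, \bZ/n) \subseteq H^1(F, \bQ/\bZ)$ be the character cutting out $L/F$ (relative to a fixed generator of $\mathrm{Gal}(L/F)$), so that $(L/F,f) = \chi \cup (f)$ in $\Br(K)$. Because $\chi$ is pulled back from $\mathrm{Spec}\, F$, it is unramified at every $x \in \cX^{(1)}$, and the standard formula for the residue of a cyclic algebra gives
\begin{equation*}
\partial_x\big((L/F,f)\big) = \mathrm{ord}_x(f) \cdot \chi_x \ \in \ H^1(\kappa(x), \bQ/\bZ),
\end{equation*}
where $\chi_x$ is the restriction of $\chi$ to $\kappa(x)$, equivalently the character attached to the \'etale $\kappa(x)$-algebra $L \otimes_F \kappa(x)$. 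Since $\pi \colon \cX_L \to \cX$ is finite \'etale, the prime divisors $W$ of $\cX_L$ lying over the prime divisor $V := \overline{\{x\}}$ are the generic points of $V \times_{\cX} \cX_L = \mathrm{Spec}(L \otimes_F \kappa(x))$; and as $L/F$ is cyclic this algebra is a product $\prod_{i=1}^{g} M_i$ of mutually isomorphic cyclic extensions of $\kappa(x)$ of a common degree $e_x$ with $g e_x = n$. Hence $\chi_x$ has exact order $e_x$ in $H^1(\kappa(x), \bQ/\bZ)$, so $\partial_x\big((L/F,f)\big) = 0$ if and only if $e_x \mid \mathrm{ord}_x(f)$.

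On the divisor side, each such $W$ satisfies $[\kappa(W):\kappa(V)] = [M_i : \kappa(x)] = e_x$, so by the definition of the push-forward (norm) of Weil divisors along $\pi$, namely $\pi_*[W] = [\kappa(W):\kappa(V)]\,[V]$, the coefficient of $V$ in any $\pi_*(D)$ is a multiple of $e_x$, and every multiple of $e_x$ is achieved by supporting $D$ on a single $W$ over $V$ with a suitable multiplicity. Consequently $\mathrm{div}(f) = \mathrm{Norm}_{L/F}(D)$ is solvable for some $D \in \mathrm{Div}(\cX_L)$ if and only if $e_x \mid \mathrm{ord}_x(f)$ for every $x \in \cX^{(1)}$ (for the nontrivial implication, choose for each prime divisor $V = \overline{\{x\}}$ one $W$ above it with multiplicity $\mathrm{ord}_x(f)/e_x$, the sum being finite since $\mathrm{div}(f)$ has finite support). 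Combining this with the residue computation and the purity description of $\Br(\cX)$ yields the claim. I expect the only genuinely delicate ingredient to be the invocation of purity in dimension $\ge 3$ (which is what is actually needed for the threefolds appearing later in the paper); the rest is the residue formula for cyclic algebras together with elementary bookkeeping of residue degrees in the \'etale cover $\cX_L \to \cX$.
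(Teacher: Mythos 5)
Your argument is correct but takes a genuinely different route from the paper's proof in Subsection 7.1. The paper, following Bright and Corn, works inside Galois cohomology of the cyclic group $G = \mathrm{Gal}(L/F)$: it uses the short exact sequence $0 \to F(\cX_L)^\times/L^\times \to \mathrm{Div}(\cX_L) \to \mathrm{Pic}(\cX_L) \to 0$ together with the $2$-periodicity of Tate cohomology for a cyclic group to move the class of $(L/F,f)$ from $H^2(G, F(\cX_L)^\times/L^\times)$ to $\widehat{H}^0$, and then reads off the norm condition on $\mathrm{div}(f)$ from the map into $\widehat{H}^0(G, \mathrm{Div}(\cX_L)) = \mathrm{Div}(\cX)/N\,\mathrm{Div}(\cX_L)$. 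You instead invoke purity for the Brauer group of a regular scheme to characterize $\Br(\cX) \subseteq \Br(F(\cX))$ by the vanishing of residues at every codimension-one point, compute that residue at $x$ to be $\mathrm{ord}_x(f)$ times a character whose exact order is the residue degree $e_x$ of the \'etale cover $\cX_L \to \cX$ above $x$, and match the resulting per-divisor divisibility conditions with the image of $\pi_*$ on $\mathrm{Div}(\cX_L)$. Both arrive at the same equivalence, and your identification of $\mathrm{Norm}_{L/F}$ with the pushforward $\pi_*$ on prime divisors (via $\mathrm{Div}(\cX_L)^G \cong \mathrm{Div}(\cX)$ for the finite \'etale $G$-cover) is exactly right. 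The trade-off is that your route is more geometric and checkable divisor-by-divisor, at the cost of invoking Gabber-type absolute purity as a black box (harmless in the paper's characteristic-zero threefold application, where Bloch--Ogus already suffices); the paper's Tate-periodicity route packages the same information into a single cohomological diagram without ever writing down a residue map.
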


\begin{remark}
\label{Remark-Sketching-a-proof-the-Bright-Corn-lemma-in-the-appendix}

We will sketch a proof of Lemma \ref{bright-corn-lemma} in Subsection \ref{Subsection-Proof-of-the-Bright-Corn-Lemma}.

\end{remark}

\begin{lemma}$(\text{Lang-Nishimura, \cite[Lemme 3.1.1, p. 164]{colliot-thelene-coray-sansuc}})$
\label{lang-nishimura-lemma}

Let $F$ be a field, and let $\cX$ be an integral $F$-variety. Let $\cY$ be a proper $F$-variety, and let $f : \cX \longrightarrow \cY$ be an $F$-rational map. If $\cX(F)$ contains a regular $F$-point, then $\cY(F)$ is non-empty. In particular, the condition $\cX(F) \ne \emptyset$ is an $F$-birational invariant in the category of smooth, proper and integral $F$-varieties $\cX$.

\end{lemma}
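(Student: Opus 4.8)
The plan is to prove the statement by induction on $n = \dim \cX$, using the valuative criterion of properness for $\cY \to \operatorname{Spec} F$ as the essential tool and the one‑dimensional case as the engine of the argument. For $n = 0$ an integral $F$‑variety possessing an $F$‑point is $\operatorname{Spec} F$ itself; a rational map out of it is a morphism, and its value at $x$ is the desired point of $\cY(F)$. For the inductive step I would first shrink $\cX$ to an affine open neighbourhood of $x$ on which $\cX$ is \emph{regular}; this is legitimate because the regular locus of a variety over $F$ is open (finite‑type $F$‑schemes are excellent) and contains $x$, and because passing to an open subset changes neither the hypotheses nor the conclusion. If $f$ is already defined as a morphism at $x$, then $f(x) \in \cY(F)$ and we are done, so I may assume that $x$ lies on the indeterminacy locus $Z \subsetneq \cX$ of $f$; in particular $\dim Z \le n - 1$.

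If $n = 1$, then $\mathcal O_{\cX,x}$ is a discrete valuation ring with residue field $F$ and fraction field $F(\cX)$, and $f$ supplies a morphism $\operatorname{Spec} F(\cX) \to \cY$. Since $\cY \to \operatorname{Spec} F$ is proper, the valuative criterion of properness (in its Noetherian/DVR form, which suffices as $\cY$ is of finite type) extends this to a morphism $\operatorname{Spec}\mathcal O_{\cX,x} \to \cY$; the image of the closed point is then an $F$‑rational point of $\cY$. If $n \ge 2$, I would instead cut down by one dimension: choose $h \in \mathfrak m_x \setminus \mathfrak m_x^2$ (a regular parameter at $x$, defined on a neighbourhood to which I shrink $\cX$) so that $V(h)$ is regular at $x$ and, crucially, so that the irreducible component $\cX'$ of $V(h)$ through $x$ is \emph{not} contained in $Z$. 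Because $\mathcal O_{\cX,x}$ is regular and $h$ is a regular parameter, $\mathcal O_{\cX',x} = \mathcal O_{\cX,x}/(h)$ is again a regular local ring, in particular a domain, so near $x$ the scheme $\cX'$ is integral, regular at $x$, of dimension $n-1$, with $x$ a regular $F$‑point; and since $\cX' \not\subseteq Z$, the restriction $f|_{\cX'} : \cX' \dashrightarrow \cY$ is a genuine rational map. The inductive hypothesis applied to $\cX'$ then gives $\cY(F) \ne \emptyset$.

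The ``in particular'' clause is then immediate: every $F$‑point of a smooth $F$‑variety is a regular $F$‑point, so applying the lemma to an $F$‑birational map between smooth, proper, integral $F$‑varieties in either direction shows that the existence of an $F$‑point is preserved, hence is an $F$‑birational invariant in that category.

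The main obstacle is the choice of the hyperplane section $h$ when $n \ge 2$ and $Z$ has codimension exactly one at $x$: one cannot then take $h$ arbitrarily, and over a finite field the naive ``general hyperplane'' dimension count is unavailable since a nonzero homogeneous form can vanish on every rational line through the origin. What is actually required is the purely local statement that a nonzero element of a regular local ring of dimension $\ge 2$ is not contained in every height‑one prime $\mathfrak p$ with $\mathcal O_{\cX,x}/\mathfrak p$ regular — equivalently, that through $x$ there passes a curve germ, regular at $x$, not lying in $Z$ — which holds over an arbitrary field by a short commutative‑algebra argument (alternatively, one may simply invoke \cite{colliot-thelene-coray-sansuc} for this input). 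A minor point to track throughout is the distinction between ``regular'' and ``smooth'' over a possibly imperfect field: regularity of the chosen point is exactly what makes the successive local rings discrete valuation rings, while smoothness enters only in the birational‑invariance statement to guarantee that $F$‑points are regular.
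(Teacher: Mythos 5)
The paper gives no proof of this lemma: it is quoted directly from Colliot-Th\'el\`ene, Coray and Sansuc, so there is no in-paper argument to compare against, and what follows is an assessment of your proposal on its own merits.

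Your proof is correct in structure, and it is a genuine variant of the argument now usually attributed to Koll\'ar and Szab\'o. Both proofs induct on $\dim\cX$ and both invoke the valuative criterion of properness at a DVR in the base case; the difference is in how one drops a dimension. You cut down by a regular parameter $h$ at $x$ and pass to the component of $V(h)$ through $x$; Koll\'ar and Szab\'o instead blow up $\cX$ at $x$, note that the lifted rational map is automatically defined at the generic point of the exceptional $\bP^{n-1}_F$ (whose local ring on the blow-up is a DVR, so properness of $\cY$ applies), and induct on the exceptional divisor, which has many regular $F$-points. The blow-up route sidesteps the one genuine gap in your version, which you have honestly flagged: one must produce a regular parameter $h$ with $V(h)\not\subseteq Z$, and over a finite field the naive ``generic hyperplane'' count is not available.

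That gap is real but fillable, so the proof is salvageable as written. Take $0\ne g\in I(Z)\subseteq R:=\cO_{\cX,x}$. Since $R$ is regular local of dimension $\ge 2$, it is a UFD by Auslander--Buchsbaum, so $g$ has only finitely many prime divisors up to associates; on the other hand, $R$ contains infinitely many pairwise non-associate regular parameters. For instance, pick $h,k$ with $\bar h,\bar k$ linearly independent in $\mathfrak m_x/\mathfrak m_x^2$ and consider $h_i := h + k^i$ for $i\ge 2$: a relation $h_i = u h_j$ with $u\in R^\times$ and $i<j$ forces $(1-u)h = k^i(uk^{j-i}-1)$ with $uk^{j-i}-1$ a unit, and comparing leading forms in $\operatorname{gr}_{\mathfrak m_x}R \cong F[\bar x_1,\dots,\bar x_n]$ gives $\bar h\mid\bar k^i$, which is impossible for non-proportional linear forms. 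Choosing $h$ in one of the infinitely many good associate classes yields $g\notin(h)$, hence $V(h)\not\subseteq Z$ locally at $x$. With that local lemma supplied, the remaining steps --- shrinking to the integral regular neighbourhood of $x$ in $V(h)$, the $n=1$ case via the valuative criterion (the image $y$ of the closed point is an $F$-point since $\kappa(y)\hookrightarrow\kappa(x)=F$ is $F$-linear), and the ``in particular'' clause using that $F$-points of smooth varieties are regular --- are all sound.
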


The following result describes how to construct certain Azumaya algebras on certain threefolds.

\begin{lemma}
\label{azumaya-threefold-lemma}

Let $p$ be a prime such that $p \equiv 5 \pmod{8}$. Assume that the following are true.
\begin{itemize}

\item [(A1)] $3$ is a quadratic non-residue in $\bF_p^{\times}$.

\item [(B)] there exists a couple $(b, c) \in \bZ^2$ such that $\gcd(b, c) = 1$, $b \not\equiv 0 \pmod{p}$ and $q := |pc - 4b^2|$ is either $1$ or an odd power of an odd prime. Here $|\cdot|$ denotes the absolute value in $\bQ$. Furthermore, if $b \equiv 0 \pmod{3}$, then $c \equiv 2 \pmod{3}$.

\end{itemize}
Let $\cV$ be a smooth, proper $\bQ$-model of the smooth $\bQ$-variety $\cU$ in $\bA^5_{\bQ}$ defined by
\begin{equation}
\label{eqn-U}
\cU:
\begin{cases}
0 \ne u_1^2 - pv_1^2 = 2x \\
0 \ne u_2^2 - pv_2^2 = 2(x + 4pb^2)(x + p^2c).
\end{cases}
\end{equation}
Let $\bQ(\cV)$ be the function field of $\cV$, and let $\cA$ be the class of the quaternion algebra $\left(p, x + 4pb^2\right)$. Then $\cA$ is
an Azumaya algebra of $\cV$, that is, $\cA$ belongs to the subgroup $\Br(\cV)$ of $\Br(\bQ(\cV))$.

\end{lemma}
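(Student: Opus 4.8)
The plan is to show that the quaternion algebra $\cA = \left(p, x + 4pb^2\right)$, viewed as an element of $\Br(\bQ(\cV))$, actually lies in the subgroup $\Br(\cV)$, by applying the Bright--Corn criterion (Lemma \ref{bright-corn-lemma}) with $F = \bQ$, the cyclic (quadratic) extension $L = \bQ(\sqrt{p})/\bQ$, and $f = x + 4pb^2 \in \bQ(\cV)$. Since $p$ is not a square in $\bQ$ (indeed $p$ is prime), the extension $L/\bQ$ is cyclic of degree $2$, and the class of $\cA$ is precisely the class of the cyclic algebra $\left(L/\bQ, f\right)$. By Lemma \ref{bright-corn-lemma}, what must be verified is that the divisor of $f = x + 4pb^2$ on $\cV$ is a norm from $\mathrm{Div}(\cV_L)$ where $\cV_L = \cV \times_{\bQ} L$. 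Because the statement is a birational one — membership in $\Br(\cV)$ for a smooth proper model — it suffices to compute $\mathrm{div}(f)$ on the smooth affine variety $\cU$ and understand the components at infinity on any fixed model; the divisor class computation is insensitive to the choice of smooth proper model up to the subtlety of the boundary, which I will handle by checking that the relevant function extends appropriately.

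The core computation is the following. On $\cU$ the function $x + 4pb^2$ appears in the second equation as a factor: $u_2^2 - pv_2^2 = 2(x + 4pb^2)(x + p^2c)$. Over $L = \bQ(\sqrt p)$ the quadratic form $u_2^2 - pv_2^2$ factors as $(u_2 - \sqrt p\, v_2)(u_2 + \sqrt p\, v_2)$, so on $\cU_L$ we get the identity $(u_2 - \sqrt p\, v_2)(u_2 + \sqrt p\, v_2) = 2(x + 4pb^2)(x + p^2 c)$. Likewise the first equation gives $(u_1 - \sqrt p\, v_1)(u_1 + \sqrt p\, v_1) = 2x$. The idea is to exhibit $\mathrm{div}(x + 4pb^2)$ on $\cV$ (equivalently, its prime components and multiplicities, including at the boundary) as $\mathrm{Norm}_{L/\bQ}(D)$ for an explicit divisor $D$ on $\cV_L$ built out of the divisors of $u_2 - \sqrt p\, v_2$ and $u_2 + \sqrt p\, v_2$ (which are Galois-conjugate) and the divisor of $x + p^2 c$. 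Concretely, from the factorization one expects $\mathrm{div}(x + 4pb^2) + \mathrm{div}(x + p^2 c) = \mathrm{Norm}_{L/\bQ}\big(\mathrm{div}(u_2 - \sqrt p\, v_2)\big)$ up to the divisor of the constant $2$ and boundary corrections; one then needs the analogous relation handling $x + p^2 c$ via the other conic or via a direct argument that its divisor is also a norm (which is where the arithmetic hypotheses, in particular $q = |pc - 4b^2|$ being $1$ or an odd power of an odd prime, and $p \equiv 5 \pmod 8$ with $3$ a non-residue, enter: they guarantee that the closed fibres where these factors vanish behave as required, i.e. stay irreducible or split compatibly over $L$, so that no ``odd leftover'' component obstructs the norm). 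Hypothesis (A1) and $p \equiv 5 \pmod 8$ should be used to control the geometry of the singular/reducible locus and to ensure the pieces not captured by the obvious norm relation are themselves individually norms.

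The main obstacle I anticipate is the boundary analysis: the Bright--Corn criterion is about divisors on the smooth proper model $\cV$, not on the affine $\cU$, so after writing down the candidate $D$ on $\cU_L$ I must check that the relation $\mathrm{div}(f) = \mathrm{Norm}_{L/\bQ}(D)$ persists after taking closures in $\cV$ and $\cV_L$ — i.e. that the components of $\mathrm{div}(f)$ and of $\mathrm{Norm}_{L/\bQ}(D)$ supported on $\cV \setminus \cU$ agree. Since $\Br(\cV)$ does not depend on the choice of smooth proper model (any two such are connected by a chain of blow-ups away from codimension one, by Lemma \ref{lang-nishimura-lemma}-type birational invariance reasoning), it is enough to verify this for one convenient model, or to argue that the discrepancy at the boundary is itself a principal divisor or a norm. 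I expect this to reduce to a finite check at the (finitely many) places/components at infinity, using the explicit shape of the equations $(\ref{eqn-U})$, and the arithmetic conditions (A1), (B) to rule out the bad cases. Once the norm relation is established on all of $\cV$, Lemma \ref{bright-corn-lemma} immediately yields $\cA \in \Br(\cV)$, i.e. $\cA$ is an Azumaya algebra on $\cV$, completing the proof.
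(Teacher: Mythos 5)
Your overall framework is correct: you apply Lemma \ref{bright-corn-lemma} with $L = \bQ(\sqrt p)$ and $f = x + 4pb^2$, and try to exhibit $\mathrm{div}(f)$ as a norm. But your route is unnecessarily roundabout and, more importantly, contains a misconception. You propose to work with the full factorization $(u_2 - \sqrt p\, v_2)(u_2 + \sqrt p\, v_2) = 2(x+4pb^2)(x+p^2c)$, obtain a norm relation for the \emph{product} $(x+4pb^2)(x+p^2c)$, and then separately show $\mathrm{div}(x+p^2c)$ is a norm in order to peel it off. The paper instead argues directly: restrict to the locus $\{f = 0\}$ on $\cV_L$. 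There the second defining equation collapses to $u_2^2 - p v_2^2 = 0$, which over $L$ splits as $(u_2 - \sqrt p\, v_2)(u_2 + \sqrt p\, v_2) = 0$; hence the divisor $\{f = 0\}$ decomposes as $\Gamma + \sigma\Gamma$ where $\Gamma := \{f = 0,\ u_2 - \sqrt p\, v_2 = 0,\ u_1^2 - pv_1^2 = -8pb^2\}$, and this is exactly $\mathrm{Norm}_{L/\bQ}(\Gamma)$. One never needs to touch the other factor $x+p^2c$ or combine two norm relations. In fact your plan for disposing of $x+p^2c$ ``via a direct argument that its divisor is also a norm'' is precisely the same direct argument the paper applies once, to $x+4pb^2$, so your detour would end up doing the essential computation twice.

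The more substantive issue is your claim that the arithmetic hypotheses $p \equiv 5 \pmod 8$, (A1), and (B) are ``where the arithmetic conditions enter: they guarantee that the closed fibres\dots stay irreducible or split compatibly over $L$.'' This is not the case. The proof that $\cA$ is Azumaya is a purely geometric/divisorial computation and uses none of (A1), (B), nor the congruence on $p$; it requires only that $p$ is not a square in $\bQ$ (so that $L/\bQ$ is a degree-$2$ cyclic extension). Irreducibility of $\Gamma$ is irrelevant: $\mathrm{div}(f) = \Gamma + \sigma\Gamma$ is a norm whether or not $\Gamma$ is irreducible, since $\mathrm{Norm}_{L/\bQ}$ is applied to the divisor $\Gamma$ as an element of $\mathrm{Div}(\cV_L)$, not to a prime divisor. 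The conditions (A1) and (B) are used later, in Theorem \ref{general-threefold-hp-theorem}, to compute the local invariants of $\cA$ and to establish local solvability; they play no role in Lemma \ref{azumaya-threefold-lemma}. To your credit, you do correctly flag the boundary/poles subtlety — the norm relation must hold on the full smooth proper model $\cV$, not only on the affine $\cU$ — and the paper's one-line assertion $\mathrm{div}(f) = \Gamma + \sigma\Gamma$ silently sweeps the pole locus at infinity into the (unspecified) divisor $D$ of Lemma \ref{bright-corn-lemma}; neither you nor the paper spells out that bookkeeping, but you at least recognize it as an obstacle.
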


\begin{proof}

Let $K = \bQ(\sqrt{p})$, and let $\Gamma$ be the divisor defined over $\bQ(\sqrt{p})$ and lying on $\cV$ defined by
\begin{align*}
\Gamma : f := x  + 4pb^2 = 0, \; u_2 - \sqrt{p}v_2 = 0, \; u_1^2 - pv_1^2 = -8pb^2.
\end{align*}
Let $\sigma$ be a generator of $\text{Gal}(K/\bQ)$. We see that
\begin{align*}
\sigma\Gamma : x  + 4pb^2 = 0, \; u_2 + \sqrt{p}v_2 = 0, \; u_1^2 - pv_1^2 = -8pb^2.
\end{align*}
Hence it follows that
\begin{align*}
\mathrm{div}(f) = \Gamma + \sigma \Gamma.
\end{align*}
By Lemma \ref{bright-corn-lemma}, we deduce that $\cA$ is in the image of $\Br(\cV) \hookrightarrow \Br(\bQ(\cV))$.

\end{proof}

\begin{lemma}
\label{existence-of-a}

Let $p$ be a prime such that $p \equiv 5 \pmod{8}$. Assume that conditions $(A1)$ and $(B)$ in Lemma \ref{azumaya-threefold-lemma} are true. Then there exists a non-zero integer $a$ such that
\begin{equation}
\label{gcd-a-b-c}
\gcd\left((a^2 + 2pb^2)(2a^2 + p^2c), 3(2b^2 + pc)\right) = 1.
\end{equation}

\end{lemma}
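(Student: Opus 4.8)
The plan is to build $a$ by the Chinese Remainder Theorem, treating one prime divisor of $N := 3(2b^2 + pc)$ at a time. First I would note that $N \neq 0$: if $2b^2 + pc = 0$ then $p \mid 2b^2$, and since $p$ is odd this forces $p \mid b$, contradicting $b \not\equiv 0 \pmod p$ in (B). Next, it suffices to produce, for each prime $\ell \mid N$, a residue $a_\ell$ modulo $\ell$ with $\ell \nmid a_\ell^2 + 2pb^2$ and $\ell \nmid 2a_\ell^2 + p^2 c$; gluing these together with CRT gives a residue class modulo the radical of $N$, every representative of which satisfies $(\ref{gcd-a-b-c})$, and one may pick a representative $a \neq 0$. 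Observe also that since $3 \mid N$ always, the conditions $3 \nmid a^2 + 2pb^2$ and $3 \nmid 2a^2 + p^2 c$ are imposed, so in particular both factors are non-zero integers and there is no degenerate ``$\gcd$ with $0$'' issue.

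The second point is that $N$ is odd. Indeed, (B) says $q = |pc - 4b^2|$ is odd, and $pc - 4b^2 \equiv c \pmod 2$, so $c$ is odd; hence $2b^2 + pc$ and $N$ are odd. Thus every prime $\ell \mid N$ satisfies $\ell \geq 3$. For primes $\ell \geq 5$ the construction is immediate: modulo $\ell$ the polynomial $X^2 + 2pb^2$ has at most two roots, and $2X^2 + p^2 c$ is genuinely quadratic (as $2$ is invertible mod $\ell$) hence has at most two roots, so at most four residues are forbidden, and $\ell \geq 5 > 4$ leaves a valid $a_\ell$.

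The main obstacle, as anticipated, is the prime $\ell = 3$; this is exactly where (A1) and the final clause of (B) enter. First I would deduce from (A1) that $p \equiv 2 \pmod 3$: since $p \equiv 5 \pmod 8$ one has $(-1)^{(p-1)/2} = 1$, so quadratic reciprocity gives $\left(\frac{3}{p}\right) = \left(\frac{p}{3}\right)$, and (A1) says the left side is $-1$, forcing $p \equiv 2 \pmod 3$ (in particular $p \neq 3$, so $p^2 \equiv 1 \pmod 3$). Reducing modulo $3$ then gives $a^2 + 2pb^2 \equiv a^2 + b^2$ and $2a^2 + p^2 c \equiv 2a^2 + c \pmod 3$. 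If $3 \mid b$, then (B) gives $c \equiv 2 \pmod 3$, and taking $a \equiv 1 \pmod 3$ makes both $a^2 + b^2 \equiv 1$ and $2a^2 + c \equiv 1$ non-zero. If $3 \nmid b$, then $b^2 \equiv 1 \pmod 3$; I would take $a \equiv 0 \pmod 3$ when $c \not\equiv 0 \pmod 3$ (yielding $a^2+b^2 \equiv 1$ and $2a^2 + c \equiv c \not\equiv 0$) and $a \not\equiv 0 \pmod 3$ when $c \equiv 0 \pmod 3$ (yielding $a^2 + b^2 \equiv 2$ and $2a^2 + c \equiv 2$), non-zero in every case. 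Assembling all the local choices via CRT and choosing a non-zero representative completes the proof; the only genuinely delicate part is the $\ell = 3$ bookkeeping, where one must verify that the hypotheses leave a consistent choice in each subcase.
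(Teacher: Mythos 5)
Your proof is correct, and it takes a genuinely different route from the paper's. The paper proceeds by writing $H_1 := 2b^2 + pc = \prod_i l_i^{\alpha_i}$ and \emph{explicitly} defining $a := \prod_i l_i$ (or $a := 3\prod_i l_i$ in one sub-case), so that every prime $l_j$ of $H_1$ divides $a$; it then deduces $l_j \nmid a^2 + 2pb^2$ and $l_j \nmid 2a^2 + p^2 c$ by observing that a common factor would force $l_j \mid b$ and then $l_j \mid c$, contradicting $\gcd(b,c)=1$ (and it treats $\ell = 3$ by the same three-way split on $(b \bmod 3, c \bmod 3)$ that you use). Your argument replaces the explicit global choice of $a$ by a CRT construction prime by prime, and for each $\ell \geq 5$ it dispenses with the hypothesis $\gcd(b,c)=1$ entirely, using only the generic counting bound that two quadratics in $\bF_\ell[X]$ forbid at most four residues. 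This is a cleaner and slightly more robust way to handle the primes $\ell \geq 5$; the paper's explicit formula $a = \prod l_i$ is more concrete but tied more tightly to the specific arithmetic of the hypothesis (B). At $\ell = 3$ the two arguments are essentially identical: both derive $p \equiv 2 \pmod 3$ from (A1) and the assumption $p \equiv 5 \pmod 8$ via reciprocity, and both split into the same three sub-cases, with the impossible case $3 \mid \gcd(b,c)$ excluded by (B). Both proofs also make the same preliminary observations that $c$ (hence $N$) is odd and that $p \nmid 2b^2 + pc$.
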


\begin{proof}

Assume that $H_1 := 2b^2 + pc = \prod_{i = 1}^{m}l_i^{\alpha_i}$, where $l_i$ are distinct primes and $\alpha_i \in \bZ_{>0}$. Note that since $q = |pc - 4b^2|$ is either $1$ or an odd power of an odd prime, $c$ is odd. Thus $H_1$ is odd, and therefore $l_i \ne 2$ for each $1 \le i \le m$. We also have that $l_i \ne p$ for each $1 \le i \le m$; otherwise, $l_i = p$ for some integer $1 \le i \le m$. Since $2b^2 + pc \equiv 0 \pmod{l_i}$ and $l_i = p$, it follows that $b \equiv 0 \pmod{p}$, which is a contradiction. We consider the following cases.

$\star$ \textit{Case 1. $b \equiv 0 \pmod{3}$ and $c \not\equiv 0 \pmod{3}$.}

By assumption $(B)$, one knows that $c \equiv 2 \pmod{3}$. Define $a := \prod_{i = 1}^{m}l_i$. We contend that $a$ satisfies $(\ref{gcd-a-b-c})$. Indeed, we have that $l_i \ne 3$ for each $1 \le i \le m$; otherwise, $l_i = 3$ for some integer $1 \le i \le m$. Since $b \equiv 0 \pmod{3}$ and $p \ne 3$, it follows that $c \equiv 0 \pmod{3}$, which is a contradiction.

Let $H_2 := a^2 + 2pb^2$ and $H_3 := 2a^2 + p^2c$. We see that $a^2 = \prod_{i = 1}^{m}l_i^2 \equiv 1 \pmod{3}$. Since $p \ne 3$, we deduce that
$H_2 \equiv 1 \pmod{3}$ and $H_3 \equiv 2 + c \equiv 1 \pmod{3}$, and thus $H_2H_3 \equiv 1 \pmod{3}$.

Suppose that $l_j$ divides $H_2$ for some integer $1 \le j \le m$. Since $a = \prod_{i = 1}^{m}l_i \equiv 0 \pmod{l_j}$, it follows that
$b \equiv 0 \pmod{l_j}$. Thus $c \equiv 0 \pmod{l_j}$, which is a contradiction to $(B)$.

Suppose that $l_j$ divides $H_3$ for some integer $1 \le j \le m$. Since $a = \prod_{i = 1}^{m}l_i \equiv 0 \pmod{l_j}$ and $l_j \ne p$, it follows that
$c \equiv 0 \pmod{l_j}$. Hence $b \equiv 0 \pmod{l_j}$, which is a contradiction to $(B)$. Therefore, in any event, $(\ref{gcd-a-b-c})$ holds.

$\star$ \textit{Case 2. $b \not\equiv 0 \pmod{3}$ and $c \equiv 0 \pmod{3}$.}

Let $a := \prod_{i = 1}^{m}l_i$. By $(A1)$, we know that $p \equiv 2 \pmod{3}$. Hence repeating in the same manner as in \textit{Case 1}, we deduce that $(\ref{gcd-a-b-c})$ holds.

$\star$ \textit{Case 3. $b \not\equiv 0 \pmod{3}$ and $c \not\equiv 0 \pmod{3}$.}

Let $a := 3\prod_{i = 1}^{m}l_i$. The same arguments as in \textit{Case 1} show that $(\ref{gcd-a-b-c})$ holds.

\end{proof}

Following the techniques in the proof of Proposition 7.1 in \cite{colliot-thelene-coray-sansuc}, we now prove the main theorem in this section.

\begin{theorem}
\label{general-threefold-hp-theorem}

We maintain the same notation as in Lemma \ref{azumaya-threefold-lemma}. Let $p$ be a prime such that $p \equiv 5 \pmod{8}$. Assume further that $(A1)$ and $(B)$ are true. Let $\cU$ and $\cV$ be the $\bQ$-varieties defined by $(\ref{eqn-U})$ in Lemma \ref{azumaya-threefold-lemma}. Let $\cT$ be the singular $\bQ$-variety in $\bP^5_{\bQ}$ defined by
\begin{equation}
\label{eqn-T}
\cT:
\begin{cases}
u_1^2 - pv_1^2 &= 2xy \\
u_2^2 - pv_2^2 &= 2(x + 4pb^2y)(x + p^2cy).
\end{cases}
\end{equation}
Then $\cU$, $\cV$ and $\cT$ satisfy HP1 and HP2.

\end{theorem}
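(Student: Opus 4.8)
The plan is to follow the template of \cite[Proposition 7.1]{colliot-thelene-coray-sansuc}, splitting the argument into three parts: local solvability everywhere, non-emptiness of the Brauer--Manin set (which forces HP1), and absence of odd-degree zero-cycles (HP2). I would first reduce everything to $\cU$: since $\cV$ is a smooth proper model of $\cU$ and $\cT$ is a (singular) projective model containing $\cU$ as a dense open subset, Lemma \ref{lang-nishimura-lemma} (Lang--Nishimura) shows that $\cU(\bQ)=\emptyset$ iff $\cV(\bQ)=\emptyset$ iff $\cT(\bQ)=\emptyset$, local solvability transfers the same way at each place, and Lemma \ref{functoriality-azumaya-lemma} propagates emptiness of the Brauer--Manin set from $\cV$ to $\cT$ (and from $\cU$'s smooth proper model, which is $\cV$, to everything birational). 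Likewise a zero-cycle of odd degree on any one of them produces one on $\cV$; so it suffices to prove all three properties for $\cV$, and in practice to work on the affine variety $\cU$ together with the Azumaya class $\cA=(p,x+4pb^2)$ furnished by Lemma \ref{azumaya-threefold-lemma}.

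Next I would establish local solvability of $\cU$ at every place $v$ of $\bQ$. The two equations say that $2x$ and $2(x+4pb^2)(x+p^2c)$ are both norms from $\bQ_v(\sqrt p)$; at every place away from $p$ and $\infty$ (and the primes dividing $q$) one simply chooses $x$ a $v$-adic unit making both right-hand sides into squares, using that $\bQ_v(\sqrt p)/\bQ_v$ is either split or unramified there, and at the finitely many remaining places one exhibits an explicit point using $p\equiv 5\pmod 8$, condition $(A1)$, and the structure of $q$ in $(B)$ — this is the routine but place-by-place bookkeeping of \cite{colliot-thelene-coray-sansuc}.

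For HP1 the core computation is to evaluate the local invariants $\mathrm{inv}_v\,\cA(P_v)=(p,x(P_v)+4pb^2)_v$ for a point $P_v\in\cU(\bQ_v)$ and show $\sum_v \mathrm{inv}_v\,\cA(P_v)=\tfrac12$ for all adelic points. Away from $2$, $p$, $\infty$ and the primes dividing $q$, the quaternion symbol is trivial because all quantities involved are units; at the bad places one uses the first equation $u_1^2-pv_1^2=2x$ (so $2x$ is a norm, pinning down the parity of $v_p(x)$ and the class of $x$ modulo squares and units) together with the second equation and the auxiliary integer $a$ from Lemma \ref{existence-of-a} — whose defining coprimality $(\ref{gcd-a-b-c})$ is exactly what makes the symbols at $3$, at $p$, and at the prime dividing $q$ controllable — to show the only nonzero contribution is $\tfrac12$, at $v=p$ (or split between $p$ and $q$). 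The hypotheses $p\equiv5\pmod 8$ and $3\notin(\bF_p^\times)^2$ enter through the Hilbert symbols $(p,2)_p$ and $(p,3)_p$. This global reciprocity computation, showing the invariant sum is constantly $\tfrac12$, is the main obstacle and the technical heart of the proof; granting it, $\cV(\bA_\bQ)^{\Br}=\emptyset$ and hence $\cV$ (and $\cU$, $\cT$) satisfy HP1.

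Finally, HP2 follows from the same invariant computation by a standard degree argument: for a closed point $P$ of $\cV$ of degree $d$ over $\bQ$ with residue field $L$, functoriality of the Brauer group gives $\sum_{v}\mathrm{inv}_v\big(\mathrm{cores}_{L/\bQ}\,\cA(P)\big)=d\cdot\tfrac12$ after pushing the computation through $L$; but reciprocity forces this sum to be $0$ in $\bQ/\bZ$, so $d$ must be even. Hence no closed point, and a fortiori no zero-cycle, of odd degree exists on $\cV$, and by the transfer above the same holds for $\cU$ and $\cT$. I would close by noting that the formula $q=|pc-4b^2|$ being $1$ or an odd power of an odd prime is what guarantees the relevant symbol at $q$ is nontrivial of the correct parity, so that the contributions genuinely sum to $\tfrac12$ rather than cancelling.
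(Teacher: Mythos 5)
Your proposal tracks the paper's broad outline (reduce to $\cU$ and $\cV$, establish local solvability, compute local invariants of $\cA=(p,x+4pb^2)$, deduce HP1 and HP2), and the local-solvability bookkeeping you sketch is close in spirit to the paper's, including the role of the auxiliary integer $a$ from Lemma \ref{existence-of-a} (which the paper uses precisely for local solvability, via $x=2a^2$ at most primes and $x=2pb^2$ at the rest). However there are two substantive problems.

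First, you place the nonzero local invariant at $v=p$ ``or split between $p$ and $q$.'' That is wrong. The paper proves $\mathrm{inv}_l\,\cA(P_l)=0$ for \emph{every} $l\neq 2$, including $l=p$ and $l$ dividing $q$, and $\mathrm{inv}_2\,\cA(P_2)=\tfrac12$. The hypotheses $p\equiv5\pmod 8$ (so $(p,2)_2=-1$), the oddness of $c$ (hence of $q$), and the structure of $q$ in $(B)$ are all used to force the invariant at $2$ to be nontrivial and the one at $p$ and at primes dividing $q$ to vanish; the $2$-adic valuation analysis of $x$, $x+4pb^2$, $x+p^2c$ is the heart of it. Pinning the obstruction at the wrong place would derail any careful write-up of Step 3.

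Second, and more seriously, your HP2 argument via corestriction does not go through. The identity $\sum_v \mathrm{inv}_v(\mathrm{cores}_{L/\bQ}\cA(P))=0$ is automatic for any $\cA(P)\in\Br(L)$ by reciprocity over the global field $L$, so it gives no information; and the other side of your claimed equality, that the sum is $d\cdot\tfrac12$, is not what the invariant computation yields. The constancy $\mathrm{inv}_l\,\cA(\cdot)=0$ (resp. $\tfrac12$) proved in Step 3 is a statement about $\bQ_l$-rational points $P_l\in\cV(\bQ_l)$, not about closed points of $\cV_{\bQ_l}$ of arbitrary degree; one cannot conclude that a closed point of degree $e$ over $\bQ_l$ contributes $e\cdot\tfrac12$ to the invariant sum without redoing the analysis over every finite extension of $\bQ_l$. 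The paper sidesteps all of this with a much cleaner and quite different tool: it first shows $\cU(\bQ)=\cT(\bQ)$ (Step 1, a direct case analysis on $\bQ$-points of $\cT$ using Hilbert-symbol arguments at $2$ and $p$, which you did not mention), concludes $\cT(\bQ)=\emptyset$ from HP1 (Step 4), and then invokes the Amer--Brumer theorem: since $\cT$ is an intersection of two quadrics with no $\bQ$-point, it has no point over any odd-degree extension, hence no closed point of odd degree and \emph{a fortiori} no zero-cycle of odd degree; this then transfers to $\cU$ and $\cV$. You should replace your corestriction paragraph with this argument, which exploits the special structure of $\cT$ rather than trying to push the Brauer obstruction through odd-degree closed points.
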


\begin{proof}

$\bullet$ \textit{Step 1. $\cU(\bQ) = \cT(\bQ)$.}

It is clear that $\cU(\bQ) \subseteq \cT(\bQ)$. Hence it suffices to prove that
\begin{align*}
\cT(\bQ) \subseteq \cU(\bQ).
\end{align*}

Assume that there is a point $P := (x : y : u_1 : v_1 : u_2 : v_2) \in \cT(\bQ)$. Suppose first that $y = 0$. It follows from $(\ref{eqn-T})$ that
\begin{equation}
\label{y=0-prove-U-T-equal}
\begin{cases}
u_1^2 - pv_1^2 = 0 \\
u_2^2 - pv_2^2 = 2x^2.
\end{cases}
\end{equation}
We see from the first equation of $(\ref{y=0-prove-U-T-equal})$ that $u_1 = v_1 = 0$. If $x = 0$, then the second equation of $(\ref{y=0-prove-U-T-equal})$ implies that
$u_2 = v_2 = 0$, which is a contradiction. Hence $x \ne 0$, and thus $2 = \left(\dfrac{u_2}{x}\right)^2 - p\left(\dfrac{v_2}{x}\right)^2$. Hence $2$ is the norm of an element in $\bQ(\sqrt{p})^{\times}$, and therefore $2$ is the norm of an element in $\bQ_p(\sqrt{p})^{\times}$. Hence it follows that the local Hilbert symbol $(2, p)_p$ is $1$. On the other hand, using Theorem $5.2.7$ in \cite[page 296]{cohen} and since $p \equiv 5 \pmod{8}$, we deduce that
\begin{align*}
(2, p)_p = \left(\dfrac{2}{p}\right) = -1,
\end{align*}
which is a contradiction.

Now we assume that $y \ne 0$, and with no loss of generality, assume further that $y = 1$. It follows from $(\ref{eqn-T})$ that
\begin{equation}
\label{y=1-prove-U-T-equal}
\begin{cases}
u_1^2 - pv_1^2 = 2x \\
u_2^2 - pv_2^2 = 2(x + 4pb^2)(x + p^2c).
\end{cases}
\end{equation}
We consider the following cases.

$\star$ \textbf{Case 1.} $x = 0$.

The second equation of $(\ref{y=1-prove-U-T-equal})$ implies that $u_2^2 - pv_2^2 = 8p^3b^2c$. Thus $8p^3b^2c$ is the norm of an element in $\bQ_2(\sqrt{p})^{\times}$, and hence the local Hilbert symbol $(8p^3b^2c, p)_2$ is $1$. Since $q = |pc - 4b^2|$ is either $1$ or an odd power of an odd prime, $c$ is odd. Hence $v_2(8p^3b^2c) = 3 + 2v_2(b)$ which is an odd integer. Using Theorem $5.2.7$ in \cite[page 296]{cohen}, we deduce that
\begin{align*}
(8p^3b^2c, p)_2 = \left(\dfrac{p}{2}\right) = -1,
\end{align*}
which is a contradiction.

$\star$ \textbf{Case 2.} $x = -4pb^2$.

It follows from $(\ref{eqn-T})$ that
\begin{equation*}
u_1^2 - pv_1^2 = -8pb^2.
\end{equation*}
Using the same arguments as in \textbf{Case 1}, we deduce that $-8pb^2$ is not the norm of any element in $\bQ_2(\sqrt{p})^{\times}$, which is a contradiction to the last identity.

$\star$ \textbf{Case 3.} $x = -p^2c$.

It follows from $(\ref{eqn-T})$ that
\begin{equation*}
u_1^2 - pv_1^2 = -2p^2c.
\end{equation*}
Using the same arguments as in \textbf{Case 1}, we deduce that $-2p^2c$ is not the norm of any element in $\bQ_2(\sqrt{p})^{\times}$, which is a contradiction to the last identity.

Therefore, in any event, we have shown that if the point $P := (x : y : u_1 : v_1 : u_2 : v_2)$ belongs to $\cT(\bQ)$, then $y = 1$, $x \ne 0$, $x + 4pb^2 \ne 0$ and $x + p^2c \ne 0$. In other words, the point $P$ satisfies
\begin{equation*}
\begin{cases}
0 \ne u_1^2 - pv_1^2 = 2x \\
0 \ne u_2^2 - pv_2^2 = 2(x + 4pb^2)(x + p^2c),
\end{cases}
\end{equation*}
and thus $P \in \cU(\bQ)$. Therefore $\cU(\bQ) = \cT(\bQ)$.

$\bullet$ \textit{Step 2. $\cU, \cV$ and $\cT$ are everywhere locally solvable.}

We now prove that $\cU, \cV$ and $\cT$ are everywhere locally solvable. By Lemma \ref{lang-nishimura-lemma}, it suffices to prove that $\cU$ is everywhere locally solvable. Recall that by  Lemma \ref{existence-of-a}, there is a non-zero integer $a$ such that
\begin{equation*}
\gcd\left((a^2 + 2pb^2)(2a^2 + p^2c), 3(2b^2 + pc)\right) = 1.
\end{equation*}
Hence it suffices to consider the following cases.

$\star$ \textbf{Case I.} $l$ is a prime such that $l \ne p$ and $\gcd(l, (a^2 + 2pb^2)(2a^2 + p^2c)) = 1$.

Let $x = 2a^2$. Since $2x = 4a^2$ is a square in $\bZ$, we see that the local Hilbert symbol $(2x, p)_l$ satisfies
\begin{align*}
(2x, p)_l = (4a^2, p)_l = 1.
\end{align*}
Thus $2x$ is the norm of an element in $\bQ_l(\sqrt{p})^{\times}$. We see that
\begin{equation*}
v_l\left(2(x + 4pb^2)(x + p^2c)\right) = v_l\left(4(a^2 + 2pb^2)(2a^2 + p^2c)\right) = 2v_l(2) + v_l((a^2 + 2pb^2)(2a^2 + p^2c)) = 2v_l(2).
\end{equation*}
Hence using Theorem $5.2.7$ in \cite[page 296]{cohen}, we deduce that the local Hilbert symbol $(2(x + 4pb^2)(x + p^2c), p)_l$ satisfies
\begin{align*}
(2(x + 4pb^2)(x + p^2c), p)_l = 1
\end{align*}
Thus $2(x + 4pb^2)(x + p^2c)$ is the norm of an element in $\bQ_l(\sqrt{p})^{\times}$. Therefore $\cU$ is locally solvable at $l$.

$\star$ \textbf{Case II. } $l$ is a prime such that $\gcd(l,3(2b^2 + pc)) = 1$. Note that $p$ is among these primes.

Assume first that $l = p$, and set $x = 2pb^2$. We see that
\begin{align*}
2x &= p(2b)^2, \\
2(x + 4pb^2)(x + p^2c) &= p^2(12b^2)(2b^2 + pc).
\end{align*}
Note that $(2b)^2 \not\equiv 0 \pmod{p}$ and $(12b^2)(2b^2 + pc) \equiv 6(2b^2)^2 \not\equiv 0 \pmod{p}$. Hence using Theorem $5.2.7$ in \cite[page 296]{cohen}, we deduce that the local Hilbert symbol $(2x, p)_p$ satisfies
\begin{align*}
(2x, p)_p = (-1)^{(p - 1)/2}\left(\dfrac{(2b)^2}{p}\right) = 1.
\end{align*}
Hence $2x$ is the norm of an element in $\bQ_p\left(\sqrt{p}\right)$.

By $(A1)$, we know that $6$ is quadratic residue in $\bF_p^{\times}$. Since $(12b^2)(2b^2 + pc) \equiv 6(2b^2)^2 \pmod{p}$, we see that $(12b^2)(2b^2 + pc)$ is a quadratic residue in $\bF_p^{\times}$. Thus using the same arguments as above, we deduce that
\begin{align*}
\left(2(x + 4pb^2)(x + p^2c), p\right)_p = \left(p^2(12b^2)(2b^2 + pc), p\right)_p = 1.
\end{align*}
Therefore $2(x + 4pb^2)(x + p^2c)$ is the norm of an element in $\bQ_p\left(\sqrt{p}\right)$. Hence $\cU$ is locally solvable at $p$.

Suppose that $l \ne p$, and set $x = 2pb^2$. We see that
\begin{align*}
v_l(2x) &= v_l(4pb^2) = v_l(p) + 2v_l(2b) = 2v_l(2b), \\
v_l(2(x + 4pb^2)(x + p^2c)) &= v_l(p^2(12b^2)(2b^2 + pc)) = 2v_l(2b) + v_l(3(2b^2 + pc)) = 2v_l(2b).
\end{align*}
Hence using the same arguments as in \textbf{Case I}, we deduce that $\cU$ is locally solvable at $l$.

Therefore it follows from \textbf{Case I} and \textbf{Case II} that $\cU$ is everywhere locally solvable, and thus $\cU, \cV$ and $\cT$ are everywhere locally solvable.

$\bullet$ \textit{Step 3. $\cV$ satisfies HP1.}

We will prove that $\cV(\mathbb{A}_{\bQ})^{\Br} = \emptyset$. Let $\bQ(\cV)$ be the function field of $\cV$, and let $\cA$ be the class of quaternion algebra $(p, x + 4pb^2)$ in $\Br(\bQ(\cV))$. It follows from Lemma \ref{azumaya-threefold-lemma} that $\cA$ is an Azumaya algebra of $\cV$. We will prove that for any $P_l \in \cV(\bQ_l)$,
\begin{equation}
\label{inv-value}
 \text{inv}_l(\cA(P_l)) =
\begin{cases}
0 &  \text{if $l \ne 2$,}
\\
1/2 & \text{if $l = 2$.}
\end{cases}
\end{equation}
Since $\cV$ is smooth, we know that $\cU(\bQ_l)$ is $l$-adically dense in $\cV(\bQ_l)$. It is well-known \cite[Lemma 3.2]{viray} that $\text{inv}_l(\cA(P_l))$ is a continuous function on $\cV(\bQ_l)$ with the $l$-adic topology. Hence it suffices to prove $(\ref{inv-value})$ for $P_l \in \cU(\bQ_l)$.

Suppose that $l = \infty$ or $l$ is an odd prime such that $l \ne p$ and $p$ is a square in $\bQ_l^{\times}$. We see that $p \in \bQ_l^{2, \times}$, and hence the local Hilbert symbol $(p, t)_l$ is $1$ for any $t \in \bQ_l^{\times}$. Thus $\text{inv}_l(\cA(P_l))$ is $0$.

Suppose that $l$ is an odd prime such that $l \ne p$ and $p$ is not a square in $\bQ_l^{\times}$. Let $P_l \in \cU(\bQ_l)$, and let $x = x(P_l)$. It follows from equations $(\ref{eqn-U})$ and Theorem $5.2.7$ in \cite[page 296]{cohen} that $v_l(x)$ and $v_l((x + 4pb^2)(x + p^2c))$ are even, and hence the sum $v_l(x + 4pb^2) + v_l(x + p^2c)$ is even. Assume first that $v_l(x) < 0$. We deduce that $v_l(x + 4pb^2) = v_l(x)$, and hence it is even. Suppose now that $v_l(x) \ge 0$. We then see that $v_l(x + 4pb^2) \ge 0$ and $v_l(x + p^2c) \ge 0$. We contend that at least one of the last two numbers is zero. Otherwise, since $x \in \bZ_l$, one sees that $x + 4pb^2 \equiv 0 \pmod{l}$ and $x + p^2c \equiv 0 \pmod{l}$. Hence $l$ divides $p(pc - 4b^2)$, and thus by condition $(B)$, we deduce that $l$ divides $pq$. If $q$ is $1$, then $l = p$, which is a contradiction. Hence with no loss of generality, we might assume that $q$ is an odd power of an odd prime, say $q_1^{2m + 1}$ for some odd prime $q_1$ and $m \in \bZ_{\ge 0}$. It follows that $l = q_1$. By condition $(B)$, we know that $q = q_1^{2m + 1} \equiv \pm 4b^2 \not\equiv 0 \pmod{p}$. Hence
\begin{align*}
l = q_1 \equiv \pm \left(\dfrac{2b}{q_1^m}\right)^2 \pmod{p}.
\end{align*}
Since $-1$ is a square in $\bF_p^{\times}$, it follows from the congruence above that $l$ is a square in $\bF_p^{\times}$. By the quadratic reciprocity law, $p$ is a square in $\bQ_l^{\times}$, which is a contradiction. Since the sum $v_l(x + 4pb^2) + v_l(x + p^2c)$ is even and at least one of the two summands is even, we deduce that each of them is even. Hence using Theorem $5.2.7$ in \cite[page 296]{cohen}, we deduce that the local Hilbert symbol $(p, x + 4pb^2)_l$ is $1$. Therefore $\text{inv}_l(\cA(P_l))$ is $0$.

Suppose that $l = p$. Let $P_p \in \cU(\bQ_p)$ and $x = x(P_p)$. Since the local Hilbert symbol $(p, 2)_p$ is $-1$, we deduce from $(\ref{eqn-U})$ and Theorem $5.2.7$ in \cite[page 296]{cohen} that
\begin{align}
\label{congruence-relation}
\begin{cases}
x = p^n\alpha, \;\text{with} \;n \in \bZ, \alpha \in \bZ_p^{\times} \; \text{and} \; \left(\dfrac{\alpha}{p}\right) = -1, \\
(x + 4pb^2)(x + p^2c) = p^m\beta \; \text{with} \; m \in \bZ, \beta \in \bZ_p^{\times} \; \text{and} \; \left(\dfrac{\beta}{p}\right) = -1.
\end{cases}
\end{align}
Assume that $n \le 0$. We see that $p^{-n}x \equiv \alpha \pmod{p}$. Hence $p^{-n}(x + 4pb^2) \equiv \alpha \pmod{p}$ and
$p^{-n}(x + p^2c) \equiv \alpha \pmod{p}$. Thus the product of the two last congruences contradicts the second equation of $(\ref{congruence-relation})$. Hence with no loss of generality, we might assume that $n \ge 1$. Assume first that $n = 1$. We deduce that $p^{-1}x \equiv \alpha \pmod{p}$, and hence $p^{-1}(x + p^2c) = p^{-1}x + pc \equiv \alpha \pmod{p}$.
Thus by $(\ref{congruence-relation})$, there exists an integer $k \in \bZ$ such that $p^{k}(x + 4pb^2) \equiv \beta \alpha^{-1} \pmod{p}$. We see that $\left(\dfrac{\beta \alpha^{-1}}{p}\right) = 1$. Hence using Theorem $5.2.7$ in \cite[page 296]{cohen}, we deduce that the local Hilbert symbol $\left(p, x + 4pb^2\right)_p$ satisfies
\begin{align*}
\left(p, x + 4pb^2\right)_p = \left(\dfrac{\beta \alpha^{-1}}{p}\right) = 1.
\end{align*}
Therefore $\text{inv}_p(\cA(P_p))$ is $0$.

Suppose now that $n \ge 2$. We see that
\begin{equation*}
p^{-1}(x + 4pb^2) = p^{n - 1}\alpha + 4b^2 \equiv 4b^2 \pmod{p}.
\end{equation*}
Hence using the same arguments as above, we deduce that the local Hilbert symbol $\left(p, x + 4pb^2\right)_p$ is $1$, and thus $\text{inv}_p(\cA(P_p))$ equals $0$.

Therefore, in any event, we see that $\text{inv}_p(\cA(P_p)) = 0$.

Suppose that $l = 2$. Let $P_2 \in \cU(\bQ_2)$, and let $x = x(P_2)$. Since the local Hilbert symbol $(p, 2)_2$ satisfies
\begin{align*}
(p, 2)_2 = \left(\dfrac{p}{2}\right) = -1,
\end{align*}
we deduce from $(\ref{eqn-U})$ and Theorem $5.2.7$ in \cite[page 296]{cohen} that
\begin{align*}
(p, x)_2 = \left(p, (x + 4pb^2)(x + p^2c)\right) = -1.
\end{align*}
Hence $v_2(x)$ and $v_2((x + 4pb^2)(x + p^2c))$ are odd, and thus the sum $v_2(x + 4pb^2) + v_2(x + p^2c)$ is odd. We contend that $v_2(x) \ge 0$. Otherwise, we deduce that
\begin{align*}
v_2(x + 4pb^2) + v_2(x + p^2c) = 2v_2(x),
\end{align*}
which is a contradiction since the left-hand side is odd whereas the right-hand side is even. Since $v_2(x)$ is odd and $v_2(x) \ge 0$, we see that $v_2(x) \ge 1$. Since $c$ is odd, it follows that $v_2(p^2c) = 0$. Hence
$v_2(x + p^2c) = v_2(p^2c) = 0$, and thus $v_2(x + 4pb^2)$ is odd. Since $p \equiv 5 \pmod{8}$, the local Hilbert symbol $(p, x + 4pb^2)_2$ satisfies
\begin{align*}
(p, x + 4pb^2)_2 = \left(\dfrac{p}{2}\right) = -1.
\end{align*}
Therefore $\text{inv}_2(\cA(P_2))$ equals $1/2$.

Therefore, in any event, $\sum_{l}\text{inv}_l\cA(P_l) = 1/2$ for any $(P_l)_{l} \in \cV(\bA_{\bQ})$. Thus $\cV(\bA_{\bQ})^{\Br} = \emptyset$.

$\star$ \textit{Step 4. $\cU$ and $\cT$ satisfy HP1.}

For any point $P_l \in \cU(\bQ_l)$, let $x = x(P_l)$. By the definition of $\cU$, we see that $x + 4pb^2$ is nonzero. By what we have proved in \textit{Step 3}, we know that the local Hilbert symbol $(p, x + 4pb^2)_l$ satisfies
\begin{align*}
(p, x + 4pb^2)_l =
\begin{cases}
1 \; \; &\text{if $l \ne 2$,} \\
- 1\; \; &\text{if $l = 2$.}
\end{cases}
\end{align*}
Hence it follows that $x + 4pb^2$ is the norm of an element of $\bQ_l(\sqrt{p})$ for every $l \ne 2$ including $l = \infty$, and that $x + 4pb^2$ is not a local norm of any element of $\bQ_2(\sqrt{p})$. Thus we deduce that
\begin{align}
\label{Equation-The-product-of-local-Hilbert-symbols-equals-minus-1-in-Theorem-about-no-rational-points-on-the-threefolds}
\prod_{\substack{l}}(p, x + 4pb^2)_l = -1,
\end{align}
where the product is taken over every prime $l$ including $l = \infty$. Therefore it follows from the product formula \cite[Theorem 5.3.1]{cohen} that $\cU(\bQ)$ is empty; otherwise there exists a rational point $P \in \cU(\bQ)$. Thus the element $x + 4pb^2$ is in $\bQ^{\times}$, where $x = x(P)$. Hence by the product formula, we see that
\begin{align*}
\prod_{\substack{l}}(p, x + 4pb^2)_l = 1,
\end{align*}
which is a contradiction to $(\ref{Equation-The-product-of-local-Hilbert-symbols-equals-minus-1-in-Theorem-about-no-rational-points-on-the-threefolds})$. Hence $\cU$ satisfies HP1, and it thus follows from \textit{Step 1} that $\cT$ satisfies HP1.

$\star$ \textit{Step 5. $\cU, \cV$ and $\cT$ satisfy HP2.}

For this contention, note that since $\cT(\bQ) = \emptyset$, it follows from the Amer-Brumer theorem \cite{amer} \cite{brumer} that $\cT$ does not contain any zero-cycle of odd degree over $\bQ$. Thus $\cU, \cV$ and $\cT$ satisfy HP2, and hence our contention follows.

\end{proof}

The following result plays a key role in constructing algebraic families of curves satisfying HP1 and HP2.

\begin{theorem}
\label{threefold-hp-theorem}

Let $p$ be a prime such that $p \equiv 5 \pmod{8}$. Assume $(A1)$, and assume further that the following is true.

\begin{itemize}

\item [(A2)] there exists a couple $(b, d)$ of integers such that $b, d$ are odd, $b \not\equiv 0 \pmod{3}$, $b \not\equiv 0 \pmod{p}$ and $q := |pd^2 - 4b^2|$ is either $1$ or an odd prime.

\end{itemize}
Let $\cZ$ be a smooth and proper $\bQ$-model of the smooth $\bQ$-variety $\cX$ in $\bA^5_{\bQ}$ defined by
\begin{equation}
\label{eqn-X}
\cX :
\begin{cases}
0 \ne u_1^2 - pv_1^2 &= 2x \\
0 \ne u_2^2 - pv_2^2 &= 2(x + 4pb^2)(x + p^2d^2).
\end{cases}
\end{equation}
Let $\cY \subset \bP^5_{\bQ}$ be the singular $\bQ$-variety defined by
\begin{equation}
\label{eqn-Y}
\cY :
\begin{cases}
u_1^2 - pv_1^2 &= 2xy \\
u_2^2 - pv_2^2 &= 2(x + 4pb^2y)(x + p^2d^2y).
\end{cases}
\end{equation}
Then $\cX$, $\cY$ and $\cZ$ satisfy HP1 and HP2.

\end{theorem}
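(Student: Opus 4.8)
The plan is to obtain Theorem \ref{threefold-hp-theorem} as a direct specialization of Theorem \ref{general-threefold-hp-theorem}. Comparing the defining equations, I observe that $(\ref{eqn-X})$ and $(\ref{eqn-Y})$ are exactly $(\ref{eqn-U})$ and $(\ref{eqn-T})$ after the substitution $c := d^2$; accordingly $\cX$, $\cY$ and any smooth proper $\bQ$-model $\cZ$ of $\cX$ are the varieties $\cU$, $\cT$ and $\cV$ attached by Lemma \ref{azumaya-threefold-lemma} and Theorem \ref{general-threefold-hp-theorem} to the data $(p, b, c)$ with $c = d^2$. Hence it suffices to check that this data satisfies hypotheses $(A1)$ and $(B)$ of Theorem \ref{general-threefold-hp-theorem}; its conclusion then delivers HP1 and HP2 for $\cU = \cX$, $\cV = \cZ$ and $\cT = \cY$ word for word.

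Condition $(A1)$ is assumed in Theorem \ref{threefold-hp-theorem}. For $(B)$, set $(b, c) := (b, d^2)$ and verify the clauses one at a time. The coprimality $\gcd(b, d^2) = 1$ is equivalent to $\gcd(b, d) = 1$; and if some prime $\ell$ divided both $b$ and $d$, then $\ell^2$ would divide $pd^2 - 4b^2 = \pm q$, which is impossible because $(A2)$ makes $q$ equal to $1$ or an odd prime, hence squarefree. The condition $b \not\equiv 0 \pmod{p}$ is part of $(A2)$. The quantity $q = |pc - 4b^2| = |pd^2 - 4b^2|$ equals, by $(A2)$, either $1$ or an odd prime, and an odd prime is the first (hence an odd) power of an odd prime, as $(B)$ requires. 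Finally the implication ``$b \equiv 0 \pmod{3}$ $\Rightarrow$ $c \equiv 2 \pmod{3}$'' holds vacuously, since $(A2)$ forces $b \not\equiv 0 \pmod{3}$.

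With $(A1)$ and $(B)$ in hand, Theorem \ref{general-threefold-hp-theorem} gives the result. I do not anticipate any real obstacle; the entire argument is a matter of matching notation, the only slightly non-automatic point being the squarefreeness of $q$ used to upgrade the coprimality of $b$ and $d$ implicit in $(A2)$ to the coprimality of $b$ and $c = d^2$ demanded by $(B)$. One may also remark that the choice of smooth proper model $\cZ$ is harmless: any two such are $\bQ$-birational, and both HP1 and HP2 are $\bQ$-birational invariants of smooth proper integral varieties (via Lemma \ref{lang-nishimura-lemma} and the continuity of the local invariant maps, exactly as exploited for $\cV$ in the proof of Theorem \ref{general-threefold-hp-theorem}), so nothing is lost by reusing that theorem's statement for $\cV$.
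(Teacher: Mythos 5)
Your proposal is correct and follows essentially the same route as the paper: set $c := d^2$, verify that $(b,c)$ satisfies hypothesis $(B)$ of Lemma \ref{azumaya-threefold-lemma} (using the squarefreeness of $q$ to pass from $(A2)$ to $\gcd(b,c)=1$, and noting that the final clause of $(B)$ holds vacuously), and then invoke Theorem \ref{general-threefold-hp-theorem}. Your closing remark on the harmlessness of the choice of smooth proper model is a correct observation, though the paper treats $\cZ = \cV$ directly and does not pause to note it.
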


\begin{remark}

In Section \ref{infinitude-threefold-section}, we will prove that there are infinitely many triples $(p, b, d)$ satisfying $(A1)$ and $(A2)$.

\end{remark}

\begin{proof}

Let $c = d^2$. We contend that the couple $(b, c)$ satisfies $(B)$ in Lemma \ref{azumaya-threefold-lemma}. Indeed, we note that $\gcd(b, d) = 1$; otherwise, there exists an odd prime
$l$ such that $b = lb_1$ and $d = ld_1$ for some integers $b_1, d_1 \in \bZ$. Hence $q := l_1^2|pd_1^2 - 4b_1^2|$, which is a contradiction to $(A2)$. Thus $\gcd(b, d) = 1$, and it follows that $\gcd(b, c) = 1$.

We know that $q = |pc - 4b^2|$ is either $1$ or an odd prime, and that $b \not\equiv 0 \pmod{3}$ and $b \not\equiv 0 \pmod{p}$. Hence the couple $(b, c)$ satisfies $(B)$ in Lemma \ref{azumaya-threefold-lemma}. Thus by Theorem \ref{general-threefold-hp-theorem}, we deduce that $\cX$, $\cY$ and $\cZ$ satisfy HP1 and HP2.

\end{proof}

\section{Infinitude of the triples $(p, b, d)$.}
\label{infinitude-threefold-section}

In this section, we will prove that there are infinitely many triples $(p, b, d)$ satisfying $(A1)$ and $(A2)$ in Theorem \ref{threefold-hp-theorem}. We begin by recalling a theorem of Iwaniec's.

Let $P(x,y)$ be a quadratic polynomial in two variables $x$ and $y$. We say that $P$ $\textit{depends essentially on two
variables}$ if $\tfrac{\partial{P}}{\partial{x}}$ and $\tfrac{\partial{P}}{\partial{y}}$ are linearly independent as elements of
the $\bQ$-vector space $\bQ[x, y]$.

\begin{theorem}$(\text{Iwaniec, \cite[p.$443$]{iwaniec}})$
\label{iwaniec-thm}

Let $P(x,y) = ax^2 + bxy + cy^2 + ex + fy + g$ be a quadratic polynomial defined over $\bQ$, and assume that the following are true.

\begin{itemize}

\item[(i)] $a$, $b$, $c$, $e$, $f$, $g$ are in $\bZ$ and $\gcd(a,b,c,e,f,g) = 1$.

\item[(ii)] $P(x,y)$ is irreducible in $\bQ[x,y]$, and represents arbitrarily large odd numbers and depends essentially on two
variables.

\item[(iii)] $D =  af^2 - bef + ce^2 + (b^2 - 4ac)g = 0$ or $\Delta = b^2 - 4ac$ is a perfect square.

\end{itemize}
Then
\begin{equation*}
 Nlog^{-1}N \ll \sum_{\substack{p \le N, \, p = P(x,y)\\ p \; prime}}1.
\end{equation*}

\end{theorem}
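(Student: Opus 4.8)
The plan is to recast the problem as one of detecting primes among the values of $P$ and to use hypothesis (iii) to bring $P$ into a shape for which the parity obstruction of sieve theory can be beaten. Fix a large parameter $N$. Since $P$ is nonconstant with content $1$, there is a box $\mathcal B = \mathcal B_N \subset \bZ^2$ --- a product of two intervals --- with $|\mathcal B| \asymp N$ on which $0 < P(x,y) \le N$ and on which the size of $P$ is governed by its leading binary form; set $\mathcal A = \mathcal A_N = \{P(x,y) : (x,y) \in \mathcal B\}$. For squarefree $d$ let $\rho(d) = \#\{(x,y) \bmod d : d \mid P(x,y)\}$, a multiplicative function. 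Part (i) forces $\rho(\ell) < \ell^2$ for every odd prime $\ell$ (otherwise $\ell$ divides the content), and the requirement in (ii) that $P$ represent arbitrarily large odd numbers forces $\rho(2) < 4$, so the singular series $\mathfrak S = \prod_\ell \bigl(1 - \rho(\ell)\ell^{-2}\bigr)\bigl(1 - \ell^{-1}\bigr)^{-1}$ converges to a positive constant, and $\sum_{\ell \le z}\rho(\ell)\ell^{-2} = \log\log z + O_P(1)$, i.e. the sieve has dimension $1$. Irreducibility of $P$ and the hypothesis that it depends essentially on two variables rule out the degenerate cases in which $P$ factors (so its values are automatically composite) or the problem collapses to a single variable. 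The target is $\#\{(x,y) \in \mathcal B : P(x,y)\text{ prime}\} \gg N(\log N)^{-1}$.

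The role of (iii) is that, after an invertible $\bQ$-linear change of coordinates cleared of denominators (and, harmlessly for a lower bound, restricting to a suitable arithmetic progression and sub-box), $P$ takes one of two forms. If $\Delta = b^2-4ac$ is a nonzero perfect square the leading binary form splits over $\bQ$ and $P$ becomes a \emph{split shape} $uv+k$ with $k \in \bZ\setminus\{0\}$ and $u,v$ ranging over intervals of length $\asymp N^{1/2}$; the degenerate subcase $\Delta = 0$ leaves $P$ of the form $au^2 + \beta v + \gamma$, linear in $v$, where the conclusion is immediate from Dirichlet's theorem on each fibre $u = \text{const}$. If instead $D = 0$ (with $\Delta$ not a square) the plane conic $P = 0$ is degenerate and anisotropic, so $P$ is a nonzero rational multiple of an integral \emph{norm form} of a quadratic field, restricted to a box; in this case the count of primes represented is classical, following from Hecke's theory of $L$-functions with Grossencharacters --- equivalently from class field theory and the Chebotarev density theorem --- and already gives an asymptotic of the right order. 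Hence the essential case is the split shape $uv+k$.

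For $uv+k$ the distribution of $\mathcal A$ in arithmetic progressions is, unusually, not the main difficulty: because $P$ is quadratic, the set $\{(x,y)\bmod d : d\mid P(x,y)\}$ can be described explicitly, and Gauss-sum evaluations together with Weil's bound for the resulting (Kloosterman-type) exponential sums supply a level of distribution for $\mathcal A$ comfortably beyond $N^{1/2}$. The genuine obstacle is the parity barrier: a one-dimensional sieve, whatever its level, cannot by itself separate primes from products of two primes. The way around it, and the reason (iii) is imposed, is the bilinear (hyperbola) structure of $uv+k$: one expands the von Mangoldt weight on the values $uv+k$ by a combinatorial identity (Vaughan or Heath--Brown), reducing to ``Type I'' sums controlled by the level-of-distribution estimate above and to ``Type II'' bilinear sums of the shape $\sum_m\sum_n \alpha_m\beta_n\,\mathbf 1[\,mn\equiv -k \pmod q\,]$, in which genuine cancellation is extracted from the structure of the hyperbola modulo $q$ and Weil's bound. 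Feeding these inputs into a Rosser--Iwaniec lower-bound sieve with well-factorable weights then yields $\gg N(\log N)^{-1}$ prime values in the split case, and together with the norm-form and linear subcases this proves the theorem. I expect this last step --- obtaining power-saving cancellation in the Type II bilinear sums, uniformly over the moduli $q$ produced by the sieve --- to be the principal difficulty, since it is precisely the step that circumvents the parity obstruction.
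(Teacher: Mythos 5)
The paper does not prove this theorem at all: it is quoted directly from Iwaniec's 1974 paper \emph{Primes represented by quadratic polynomials in two variables} (Acta Arith.\ \textbf{24}, p.\ 443), and no proof or even sketch is supplied. In fact Remark~\ref{Remark-We-only-need-a-special-case-of-Iwaniec-theorem-which-is-due-to-Hecke} is explicit that only a special case is ever invoked: the polynomial $P(x,y) = p(2x+1)^2 - 4(6py+b_0)^2$ used in Lemma~\ref{threefolds-infinitude-triple-lemma} has $D = AF^2 - BEF + CE^2 + (B^2-4AC)G = 0$ while $\Delta = B^2-4AC = 2304p^3$ is \emph{not} a perfect square, so only the $D = 0$ branch of hypothesis (iii) is needed, and the paper points out that this branch follows from ``relatively classical ideas going back to Hecke.'' There is therefore no internal proof to compare your sketch against; you are reconstructing a cited external result.

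Judged on its own as such a reconstruction, your outline has the right overall architecture. The trichotomy into $\Delta$ a nonzero square (after a unimodular change of variables, the hyperbola shape $uv+k$, where one must break the parity barrier via bilinear estimates), $\Delta = 0$ (linear in one variable on each fibre, Dirichlet), and $D = 0$ with $\Delta$ not a square (using the identity $D = \Delta\cdot P(\alpha,\beta)$ for the centre $(\alpha,\beta)$ of the conic to translate away the linear part and reduce to primes represented by a binary quadratic form, which is Hecke) is the correct decomposition, and the sieve-dimension-one framing is right. But this is a sketch, not a proof: the Type~II bilinear cancellation that defeats the parity obstruction --- which is the whole content of the hard case of Iwaniec's theorem --- is stated as a goal and flagged by you as ``the principal difficulty,'' not carried out. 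A complete argument must actually establish that estimate (and, incidentally, would likely not use the Rosser--Iwaniec well-factorable weights, which postdate 1974); alternatively one can, exactly as the paper does, simply cite Iwaniec and observe that the application needs only the classical $D=0$ case.
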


We now prove the main lemma in this section.

\begin{lemma}
\label{threefolds-infinitude-triple-lemma}

Let $p$ be a prime such that $p \equiv 5 \pmod{8}$, and assume that $3$ is a quadratic non-residue in $\bF_p^{\times}$. Then there are infinitely many triples $(p, b, d)$ satisfying $(A1)$ and $(A2)$ in Theorem \ref{threefold-hp-theorem}.

\end{lemma}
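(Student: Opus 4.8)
The plan is to fix the prime $p$ (which is given to satisfy $p \equiv 5 \pmod 8$ and condition $(A1)$) and to produce infinitely many pairs $(b,d)$ of positive integers meeting all the requirements of $(A2)$: namely $b,d$ odd, $b \not\equiv 0 \pmod 3$, $b \not\equiv 0 \pmod p$, and $q := |pd^2 - 4b^2|$ equal to $1$ or an odd prime. The natural strategy is to impose linear congruence conditions on $b$ and $d$ that force the parity, the non-vanishing modulo $3$ and modulo $p$, and the oddness of $q$, and then to invoke Theorem \ref{iwaniec-thm} to get infinitely many choices making $q$ prime. So the first step is to restrict $b$ and $d$ to fixed residue classes modulo $8$, modulo $3$, and modulo $p$: for instance take $b \equiv 1 \pmod{24p}$ and $d$ odd, which guarantees $b,d$ odd, $b \not\equiv 0 \pmod 3$, and $b \not\equiv 0 \pmod p$ automatically, and makes $pd^2 - 4b^2$ odd (since $pd^2$ is odd and $4b^2$ is even).

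Next I would set up the quadratic polynomial to which Iwaniec's theorem is applied. Writing $b = 24p s + 1$ and $d = 2r+1$ (or a similar affine substitution tailored so that the output stays in a single residue class guaranteeing oddness), the expression $P(r,s) := pd^2 - 4b^2 = p(2r+1)^2 - 4(24ps+1)^2$ becomes a quadratic polynomial in the two variables $r,s$ with integer coefficients. I then need to verify hypotheses (i)-(iii) of Theorem \ref{iwaniec-thm}: that the coefficient gcd is $1$ (clear the common factor if necessary and check $\gcd = 1$ using the constant term $p - 4$, which is odd), that $P$ is irreducible over $\bQ$ and depends essentially on the two variables (the $r^2$ coefficient is $4p$ and the $s^2$ coefficient is $-4(24p)^2$, with no $rs$ term, so $\partial P/\partial r$ and $\partial P/\partial s$ are linearly independent, and the absence of an $rs$ term together with the mixed signs of the pure squares makes factorization over $\bQ$ fail unless $p$ times a square equals a rational square, which it does not), and that it represents arbitrarily large odd numbers — which holds because along $s = 0$ the one-variable polynomial $p(2r+1)^2 - 4$ takes arbitrarily large odd values. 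For condition (iii): here $\Delta = b^2 - 4ac = 0 - 4\cdot(4p)\cdot(-4\cdot(24p)^2) = (\text{perfect square})$ since it equals $64 p (24p)^2 \cdot \tfrac{?}{}$ — more carefully, with no cross term $\Delta = -4ac = 64p(24p)^2$, which is a perfect square precisely when $p$ is, so instead I would arrange the substitution so that the discriminant condition $D = 0$ holds, or choose the two Iwaniec variables to be $d$ itself and an auxiliary linear form so that $\Delta$ becomes a square; this bookkeeping is where the choice of substitution must be made with care.

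The main obstacle I anticipate is exactly this last point: steering the quadratic form $pd^2 - 4b^2$, after the affine substitutions forced by the congruence conditions, into a shape where hypothesis (iii) of Theorem \ref{iwaniec-thm} is satisfied — either $D = 0$ or $\Delta$ a perfect square. Since $4b^2$ and $pd^2$ are genuine squares up to the factor $p$, it is plausible that writing $q = pd^2 - 4b^2 = (\sqrt{p}\,d - 2b)(\sqrt{p}\,d + 2b)$ suggests a substitution in the two variables $u = d, v = b$ for which the discriminant is automatically a square, but one must honor the modular restrictions simultaneously; resolving this compatibly is the crux. Once (i)-(iii) are in place, Iwaniec's theorem yields infinitely many $(r,s)$ with $P(r,s)$ prime (and, being in a fixed odd residue class, an odd prime), hence infinitely many $(b,d)$; discarding the finitely many where $q = |P|$ might accidentally be negative-signed of small absolute value or non-prime is harmless, and each surviving triple $(p,b,d)$ satisfies $(A1)$ and $(A2)$, completing the proof.
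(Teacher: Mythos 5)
Your proposal follows essentially the same strategy as the paper: fix $p$, parametrize $b$ and $d$ by an affine substitution that forces the congruence and parity conditions in $(A2)$, and then apply Theorem \ref{iwaniec-thm} to the resulting two-variable quadratic $P(r,s) = pd^2 - 4b^2$ to get infinitely many prime values. The paper's choice is $d = 2x+1$, $b = 6py + b_0$ with $b_0$ odd, coprime to $3$ and to $p$; yours is $d = 2r+1$, $b = 24ps + 1$, which is the same idea with a more specific (and slightly overkill) modulus.

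The one point you flagged as a remaining obstacle — hypothesis (iii) of Iwaniec's theorem — is in fact already satisfied by your chosen substitution, so there is no bookkeeping issue to resolve. A direct computation with your coefficients $A = 4p$, $B = 0$, $C = -2304p^2$, $E = 4p$, $F = -192p$, $G = p-4$ gives
\begin{align*}
D &= AF^2 - BEF + CE^2 + (B^2 - 4AC)G \\
  &= 4p(192p)^2 + (-2304p^2)(4p)^2 + (4\cdot 4p \cdot 2304p^2)(p-4) \\
  &= 147456\,p^3 - 36864\,p^4 + 36864\,p^4 - 147456\,p^3 = 0,
\end{align*}
exactly as in the paper's computation with its own substitution. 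There is also a conceptual reason this was never in doubt: $D$ is (up to a nonzero constant) the determinant of the $3\times 3$ matrix of the conic $P(r,s)=0$, so $D=0$ if and only if that conic is degenerate. Since $pd^2 - 4b^2 = (\sqrt{p}\,d - 2b)(\sqrt{p}\,d + 2b)$ is a product of two conjugate linear forms, the conic is a pair of lines, hence degenerate; and any affine change of variables $(d,b) \mapsto (r,s)$ preserves degeneracy (it scales the determinant by the square of the Jacobian). So $D = 0$ holds automatically for \emph{every} affine parametrization of $b$ and $d$, not just a carefully chosen one. With that observation your proposal closes the gap and coincides with the paper's proof.

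One small cleanup: since Iwaniec's theorem produces positive prime values $q = P(r,s)$, and $P(r,s) = pd^2 - 4b^2$ is odd (odd minus even) by your parity constraints, each such value is automatically a positive odd prime equal to $|pd^2 - 4b^2|$; there is nothing to discard.
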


\begin{proof}

Let $b_0 \in \bZ$ such that $b_0 \not\equiv 0 \pmod{3}$, $b_0 \not\equiv 0 \pmod{p}$ and $b_0$ is odd. Let $P(x, y) \in \bQ[x, y]$ be a polynomial in two variables $x, y$ defined by
\begin{equation*}
P(x, y) := p(2x + 1)^2 - 4(6py + b_0)^2.
\end{equation*}
Expanding $P(x, y)$ in the form of $Ax^2 + Bxy + Cy^2 + Ex + Fy + G$, we know that
\begin{equation*}
A := 4p, \; B := 0, \; C := -144p^2, \; E := 4p, \; F := -48pb_0, \; G := p - 4b_0^2 .
\end{equation*}
Hence we deduce that $AF^2 - BEF + CE^2 + (B^2 - 4AC)G = 0$ and $\gcd(A, B, C, E, F, G) = 1$. Thus $P$ satisfies the conditions in Theorem \ref{iwaniec-thm}. Therefore there are infinitely many primes $q$ such that $q = P(x, y)$. Upon letting $b = 6py + b_0$ and $d = 2x + 1$, we see that the triple $(p, b, d)$ satisfies $(A1)$ and $(A2)$.

\end{proof}

\begin{remark}
\label{Remark-We-only-need-a-special-case-of-Iwaniec-theorem-which-is-due-to-Hecke}

In the proof of Lemma \ref{threefolds-infinitude-triple-lemma}, we only need to use a special case of Iwaniec's theorem that quadratic polynomials in two variables with zero discriminant represent infinitely many primes. As remarked in \cite[page 436]{iwaniec}, the proof of this special case can be obtained by using relatively classical ideas going back to Hecke \cite{hecke-Primes-representations}.

\end{remark}

\begin{example}

Let $(p, b, d) = (5, 1, 1)$. We see that the triple $(p, b, d)$ satisfies $(A1)$ and $(A2)$. Let $\cY_{(5, 1, 1)}$ be the singular $\bQ$-threefold in $\bP^5_{\bQ}$ defined by
\begin{align*}
\cY_{(5, 1, 1)} :
\begin{cases}
u_1^2 - 5v_1^2 = 2xy \\
u_2^2 - 5v_2^2 = 2(x + 20y)(x + 25y).
\end{cases}
\end{align*}
By Theorem \ref{threefold-hp-theorem}, $\cY_{(5, 1, 1)}$ satisfies HP1 and HP2. The threefold $\cY_{(5, 1, 1)}$ is
the well-known Colliot-Th\'el\`ene-Coray-Sansuc threefold \cite[Proposition 7.1, p. 186]{colliot-thelene-coray-sansuc}.

\end{example}

\begin{example}

Let $(p, b, d) = (29, 1, 3)$. We see that
\begin{equation*}
q = |pd^2 - 4b^2| = |29\cdot 3^2 - 4\cdot 1^2| = 257,
\end{equation*}
which is an odd prime. Hence $(29, 1, 3)$ satisfies $(A1)$ and $(A2)$. Let $\cY_{(29, 1, 3)}$ be the singular $\bQ$-threefold in $\bP^5_{\bQ}$ defined by
\begin{align*}
\cY_{(29, 1, 3)} :
\begin{cases}
u_1^2 - 29v_1^2 = 2xy \\
u_2^2 - 29v_2^2 = 2(x + 116y)(x + 7569y).
\end{cases}
\end{align*}
By Theorem \ref{threefold-hp-theorem}, $\cY_{(29, 1, 3)}$ satisfies HP1 and HP2.

\end{example}

\section{Hyperelliptic curves violating the Hasse principle}
\label{hyperelliptic-curve-hp-section}

In this section, we give a sufficient condition under which for each integer $n \ge 2$ and $n \not\equiv 0 \pmod{4}$, there exist hyperelliptic curves of genus $n$ that lie on the threefolds $\cY$ in Theorem \ref{threefold-hp-theorem} and satisfy HP1 and HP2. The sufficient condition is in terms of the existence of certain sextuples $(p, b, d, \alpha, \beta, \gamma)$, and obtained using the geometric construction of hyperelliptic curves due to Coray and Manoil \cite[Proposition 4.2]{coray-manoil}.

\begin{theorem}
\label{hyperelliptic-curves-theorem}

Let $p$ be a prime such that $p \equiv 5 \pmod{8}$, and let $(p, b, d) \in \bZ^3$ be a triple of integers satisfying $(A1)$ and $(A2)$ in Theorem \ref{threefold-hp-theorem}. Let $n$ be an integer such that $n \ge 2$, and let $(\alpha, \beta, \gamma) \in \bQ^3$ be a triple of rational numbers such that $\alpha \beta \gamma \ne 0$. Assume further that the following are true.

\begin{itemize}

\item [(A3)]
\begin{equation}
\label{eqn-P}
P := p\alpha^2 + 2\beta^2 - 2p\gamma^2 \ne 0,
\end{equation}
\begin{equation}
\label{eqn-Q}
Q := 4bdp\gamma - 4b^2\beta - d^2p\beta \ne 0,
\end{equation}
and the conic $\cQ_1 \subset \bP^2_{\bQ}$ defined by
\begin{equation*}
\cQ_1: pU^2 - V^2 - (\beta PQ)T^2 = 0
\end{equation*}
has a point $(u, v, t) \in \bZ^3$ with $uvt \ne 0$ and $\gcd(u, v, t) = 1$.

\item [(S)] the polynomial $P_{p, b, d, \alpha, \beta, \gamma}(x) \in \bQ[x]$ defined by
\begin{align*}
P_{p, b, d, \alpha, \beta, \gamma}(x) := p\alpha^2Q^2 x^{2n + 2} + (2b^2Px^2 + \beta Q)(d^2pPx^2 + 2\beta Q)
\end{align*}
is separable; that is, $P_{p, b, d, \alpha, \beta, \gamma}(x)$ has exactly $2n + 2$ distinct roots in $\bC$.

\end{itemize}
Let $\cC$ be the smooth projective model of the affine curve defined by
\begin{equation}
\label{eqn-hyperelliptic-curve-C}
\cC: z^2 = p\alpha^2Q^2 x^{2n + 2} + (2b^2Px^2 + \beta Q)(d^2pPx^2 + 2\beta Q).
\end{equation}
Then $\cC(\bQ_l) \ne \emptyset$ for every prime $l \ne 2, p$ and $\cC(\bA_{\bQ})^{\Br} = \emptyset$. Furthermore $\cC$ satisfies HP2.

\end{theorem}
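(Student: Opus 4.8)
The plan is to exhibit an explicit $\bQ$-morphism from $\cC$ into the threefold $\cY$ of Theorem \ref{threefold-hp-theorem} and then invoke functoriality (Lemma \ref{functoriality-azumaya-lemma}) together with Theorem \ref{threefold-hp-theorem} to conclude that $\cC(\bA_{\bQ})^{\Br}=\emptyset$ and that $\cC$ has no zero-cycle of odd degree. The construction of this morphism is precisely the geometric construction of hyperelliptic curves of Coray--Manoil \cite[Proposition 4.2]{coray-manoil}: one uses the rational point $(u,v,t)$ on the conic $\cQ_1$ supplied by (A3) to produce, after a linear change of the projective coordinates on $\bP^5_{\bQ}$, a map sending a point $(x:z)$ of $\cC$ to a point $(x':y':u_1:v_1:u_2:v_2)$ of $\cY$. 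The role of the auxiliary quantities $P$ and $Q$ and of the separability hypothesis (S) is exactly to guarantee that the two quadratic factors $2b^2Px^2+\beta Q$ and $d^2pPx^2+2\beta Q$ appearing in $(\ref{eqn-hyperelliptic-curve-C})$, together with the leading term $p\alpha^2Q^2x^{2n+2}$, line up with the two quadric equations cutting out $\cY$; the non-vanishing $\alpha\beta\gamma\ne 0$, $P\ne 0$, $Q\ne 0$ ensures the map is well-defined and that $\cC$ is genuinely a curve of genus $n$ (the smooth projective model of a separable degree-$2n+2$ hyperelliptic equation).

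Concretely, the steps I would carry out are: (1) recall the Coray--Manoil construction and check that under hypotheses (A3) and (S) it applies verbatim to the sextuple $(p,b,d,\alpha,\beta,\gamma)$, producing a non-constant $\bQ$-rational map $\varphi:\cC\dashrightarrow\cY$; since $\cC$ is a smooth projective curve, $\varphi$ extends to an everywhere-defined $\bQ$-morphism $\cC\to\cY$. (2) Since $\cZ$ is a smooth proper $\bQ$-model of $\cX$ and $\cY$ is $\bQ$-birational to $\cZ$ (they differ by the passage from the affine chart $y=1$ to the projective closure, plus resolution of singularities), compose with the birational map and apply Lang--Nishimura (Lemma \ref{lang-nishimura-lemma}) where needed to land in $\cZ$. (3) Invoke Theorem \ref{threefold-hp-theorem}, which gives $\cZ(\bA_{\bQ})^{\Br}=\emptyset$; then Lemma \ref{functoriality-azumaya-lemma} applied to the morphism $\cC\to\cZ$ (after replacing $\cC$ by itself, already smooth and proper) yields $\cC(\bA_{\bQ})^{\Br}=\emptyset$. (4) For HP2: by (A3) and (S), $\cC(\bA_{\bQ})^{\Br}=\emptyset$ forces $\cC(\bQ)=\emptyset$, and a zero-cycle of odd degree on $\cC$ would push forward to one on $\cZ$, contradicting the HP2 part of Theorem \ref{threefold-hp-theorem}; alternatively one argues directly on $\cC$ as in \textit{Step 5} of the proof of Theorem \ref{general-threefold-hp-theorem}. (5) Finally, for the local-solvability claim $\cC(\bQ_l)\ne\emptyset$ for $l\ne 2,p$: one checks by an explicit Hensel-type argument that the equation $(\ref{eqn-hyperelliptic-curve-C})$, or rather its smooth projective model, acquires an $l$-adic point, using that $\cX$ is everywhere locally solvable (\textit{Step 2} of Theorem \ref{general-threefold-hp-theorem}) and pulling back a smooth $\bQ_l$-point of $\cX$ through the Coray--Manoil map — the coprimality and squareness bookkeeping at $l$ being the same as in \textbf{Case I} and \textbf{Case II} there.

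The main obstacle is step (1): verifying in detail that the Coray--Manoil geometric construction does produce a morphism $\cC\to\cY$ for the specific polynomials in $(\ref{eqn-hyperelliptic-curve-C})$, i.e. that the coefficients $p\alpha^2Q^2$, $2b^2P$, $\beta Q$, $d^2pP$, $2\beta Q$ are exactly the ones forced by matching $\cC$ against the pair of quadrics defining $\cY$ and by the change of variables determined by $(u,v,t)\in\cQ_1$. This is a bookkeeping computation with the conic $\cQ_1$ and the norm forms $u_i^2-pv_i^2$, and getting the normalization of $P$ and $Q$ right (so that both factors are simultaneously represented, over $\bQ$, by $2\cdot(\text{norm from }\bQ(\sqrt p))$ up to the common scalar coming from $\cQ_1$) is where the argument has real content; everything downstream is a formal consequence of functoriality and Theorem \ref{threefold-hp-theorem}. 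A secondary subtlety is that we only claim local solvability away from $2$ and $p$ — at those two places $\cC$ may fail to have local points, which is consistent with (indeed part of the mechanism behind) the Brauer--Manin obstruction, so no local claim is needed there.
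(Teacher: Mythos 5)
Your outline of the Brauer–Manin and HP2 parts tracks the paper's proof closely: the Coray–Manoil construction produces a $\bQ$-morphism $\cC \to \cD \to \cY$ (with $\cD$ a singular del Pezzo surface in $\bP^4_{\bQ}$ appearing as a hyperplane section of $\cY$), and then functoriality via Lemma \ref{functoriality-azumaya-lemma} together with Theorem \ref{threefold-hp-theorem} gives $\cC(\bA_{\bQ})^{\Br}=\emptyset$, while pushing forward zero-cycles gives HP2. The detour through $\cZ$ and Lang--Nishimura in your step (2) is unnecessary — Lemma \ref{functoriality-azumaya-lemma} applies directly to the proper target $\cY$ — but it is harmless.

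The genuine gap is your step (5). You propose to establish $\cC(\bQ_l)\ne\emptyset$ for $l\ne 2,p$ by ``pulling back a smooth $\bQ_l$-point of $\cX$ through the Coray--Manoil map.'' That cannot work: the Coray--Manoil map goes from the curve $\cC$ into the threefold $\cY$, not the other way round, and a point of $\cY(\bQ_l)$ has no reason to lie on the image of $\cC$ (a one-dimensional subvariety). Local solvability of the ambient threefold says nothing about local solvability of a curve sitting inside it. The paper instead proves local solvability of $\cC$ by \emph{direct} construction of $l$-adic points, splitting into the three cases according to which of $p$, $2$, $2p$ is a square in $\bQ_l^\times$ (for odd $l\ne p$ at least one of these must be a square, since the product of the three Legendre symbols is $1$): the point at infinity $(1:0:\sqrt{p}\,\alpha Q)$ when $p$ is a square, the affine point $(0,\sqrt{2}\,\beta Q)$ when $2$ is a square, and the affine point $(1,\sqrt{2p}\,(\gamma Q+bdP))$ when $2p$ is a square. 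This last case is the one with real content: it rests on the algebraic identity
\begin{align*}
p\alpha^2Q^2 + 4b^2\beta PQ + \beta p P Q d^2 + 2\beta^2Q^2 = 2p\gamma^2Q^2 + 4p(\gamma Q)(bdP),
\end{align*}
which follows directly from the definitions $(\ref{eqn-P})$ and $(\ref{eqn-Q})$ of $P$ and $Q$. That identity is precisely why $P$ and $Q$ are normalized as they are, and it plays no role in the bookkeeping of the embedding; your proposal does not account for it, and without it the local-solvability claim is unproved.
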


\begin{proof}

The proof of Theorem \ref{hyperelliptic-curves-theorem} presented below follows closely from that of Proposition \cite[Proposition 4.2]{coray-manoil}. We begin by recalling the geometric construction of hyperelliptic curves due to Coray and Manoil.

Let $\cC_{a} \subset \bA_K^2$ be the affine curve defined by $z^2 = P(x)$, where $P(x)$ is a separable polynomial of degree $2n + 2$ and $K$ is a number field. Recall from \cite[Chapter II, Exercise 2.14]{silverman-EC} that the smooth projective model of $\cC_{a}$ can be described as the closure of
the image of $\cC_{a}$ under the mapping
\begin{align*}
\cC_{a} &\longrightarrow \bP^{n + 2}_K \\
(x, z) &\mapsto \left(1, x, \ldots, x^{n + 1}, z\right).
\end{align*}
Following \cite[Proposition 4.2]{coray-manoil}, we will index the coordinates of $\bP^{n+2}_K$ in such a way that $z_i$ corresponds to $x^i$ for $0 \le i \le n + 1$ and $z_{n + 2}$ corresponds to $z$.

Using the above arguments, we deduce from $(\ref{eqn-hyperelliptic-curve-C})$ that $\cC$ can be smoothly embedded into the intersection of quadrics defined by
\begin{align}
\label{quadrics-eqns}
\begin{cases}
z_{n + 2}^2 &= p\alpha^2Q^2z_{n+1}^2 + (2b^2Pz_2 + \beta Qz_0)(d^2pPz_2 + 2\beta Qz_0) \\
z_1^2       &= z_2z_0.
\end{cases}
\end{align}

Recall that $(u, v, t) \in \bZ^3$ is the point on the conic $\cQ_1$ defined in $(A3)$ that is assumed to exist. Upon letting
\begin{align*}
\begin{cases}
z_0 &= \dfrac{1}{\beta Q}x \\
z_1 &= \dfrac{t}{u}u_1  \\
z_2 &= \dfrac{2p}{P}y \\
z_{n + 1} &= \dfrac{1}{\alpha Q}v_2 \\
z_{n+2} &= u_2,
\end{cases}
\end{align*}
we deduce from $(\ref{quadrics-eqns})$ that
\begin{align}
\label{dpsurface}
\begin{cases}
\dfrac{\beta PQt^2}{pu^2}u_1^2 &= 2xy \\
u_2^2 - pv_2^2 &= 2(x + 4pb^2y)(x + p^2d^2y).
\end{cases}
\end{align}

We see that $(\ref{dpsurface})$ defines a singular del Pezzo surface $\cD \subseteq \bP^4_{\bQ}$. We contend that $\cD(\bA_{\bQ})^{\text{Br}} = \emptyset$ and $\cD$ does not contain any zero-cycle of odd degree over $\bQ$. Indeed, upon letting
\begin{equation*}
v_1 = \dfrac{v}{pu}u_1,
\end{equation*}
we deduce from the first equation of $(\ref{dpsurface})$ and $(A3)$ that
\begin{equation*}
u_1^2 - pv_1^2 = u_1^2 - p\dfrac{v^2}{p^2u^2}u_1^2 = \dfrac{\beta PQt^2}{pu^2}u_1^2 = 2xy.
\end{equation*}
Therefore $\cD$ is a hyperplane section of the threefold $\cY$ in Theorem \ref{threefold-hp-theorem}. Hence there exists a sequence of $\bQ$-morphisms
\begin{align*}
\cC \longrightarrow \cD \longrightarrow \cY.
\end{align*}
Hence it follows from Lemma \ref{functoriality-azumaya-lemma} and Theorem \ref{threefold-hp-theorem} that $\cD(\bA_{\bQ})^{\text{Br}} = \emptyset$. Thus $\cC(\bA_{\bQ})^{\text{Br}} = \emptyset$. Furthermore since $\cY$ does not contain any zero-cycle of odd degree over $\bQ$, so do $\cC$ and $\cD$.

We now prove that $\cC$ is locally solvable at primes $l$ with $l \ne 2, p$. We consider the following cases.

$\star$ \textbf{Case I.} \textit{$l = \infty$ or $l$ is an odd prime such that $l \ne p$ and $\left(\dfrac{p}{l}\right) = 1$.}

We know that the curve $\cC^{\ast}$ defined by
\begin{equation*}
\cC^{\ast} : z^2 = p\alpha^2Q^2 x^{2n + 2} + y^{2n - 2}(2b^2Px^2 + \beta Qy^2)(d^2pPx^2 + 2\beta Qy^2)
\end{equation*}
is an open subscheme of $\cC$. We see that $P_{\infty} = (x :y : z) = (1 : 0 : \sqrt{p}\alpha Q)$ belongs to $\cC^{\ast}(\bQ_l) \subset \cC(\bQ_l) $, and hence $\cC$ is locally solvable at $l$.

$\star$ \textbf{Case II.} \textit{$l$ is an odd prime such that $\left(\dfrac{2}{l}\right) = 1$.}

It follows from $(\ref{eqn-hyperelliptic-curve-C})$ that the point $P_1 = (x, z) = (0, \sqrt{2}\beta Q)$ belongs to $\cC(\bQ_l)$.

$\star$ \textbf{Case III.} \textit{$l$ is an odd prime such that $l \ne p$ and $\left(\dfrac{2p}{l}\right) = 1$.}

Let $F(x, z)$ be the defining polynomial of $\cC$ defined by
\begin{equation*}
F(x, z) = p\alpha^2Q^2 x^{2n + 2} + (2b^2Px^2 + \beta Q)(d^2pPx^2 + 2\beta Q) - z^2.
\end{equation*}
We see that
\begin{equation*}
F\left(1, \sqrt{2p}(\gamma Q + bdP)\right) = (p\alpha^2Q^2 + 2p(bdP)^2 + 4b^2\beta PQ + \beta pPQd^2 + 2\beta^2 Q^2) - 2p(\gamma Q + bdP)^2
\end{equation*}
Hence it follows from $(\ref{eqn-P})$ and $(\ref{eqn-Q})$ that
\begin{equation*}
p\alpha^2Q^2 + 4b^2\beta PQ + \beta pPQd^2 + 2\beta^2 Q^2 = 2p\gamma^2Q^2 + 4p(\gamma Q)(bdP).
\end{equation*}
Thus
\begin{equation*}
p\alpha^2Q^2 + 2p(bdP)^2 + 4b^2\beta PQ + \beta pPQd^2 + 2\beta^2 Q^2 = 2p(\gamma Q + bdP)^2.
\end{equation*}
Hence we deduce that $F(1,\sqrt{2p}(\gamma Q + bdP)) = 0$, and therefore the point $P_2 = (1,\sqrt{2p}(\gamma Q + bdP))$ belongs to $\cC(\bQ_l)$.

Thus, in any event, $\cC$ is locally solvable at primes $l$ with $l \ne 2, p$, which proves our contention.

\end{proof}

\begin{remark}

Theorem \ref{hyperelliptic-curves-theorem} constructs hyperelliptic curves of genus at least two such that they satisfy HP2 and all conditions in HP1 except local solvability at $2$ and $p$. The rest of this section presents certain sufficient conditions for which those hyperelliptic curves arising from Theorem \ref{hyperelliptic-curves-theorem} are locally solvable at $2$ and $p$, and hence satisfy HP1 and HP2.

\end{remark}

\begin{lemma}
\label{p-divisibility-beta-lemma}

Let $p$ be a prime such that $p \equiv 5 \pmod{8}$, and let $(b, d) \in \bZ^3$ be a couple of integers satisfying $(A1)$ and $(A2)$ in Theorem \ref{threefold-hp-theorem}. Assume that there is a triple $(\alpha, \beta, \gamma) \in \bQ^3$ satisfying $(A3)$ in Theorem \ref{hyperelliptic-curves-theorem}, and assume further that $\alpha, \beta, \gamma \in \bZ_p$. Then there is a rational number $\bar{\beta} \in \bQ$ such that $\beta = p\bar{\beta}$ and $\bar{\beta} \in \bZ_p$.

\end{lemma}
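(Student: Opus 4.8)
The claim amounts to showing that $p \mid \beta$ in the sense that $\beta/p \in \bZ_p$, i.e., $v_p(\beta) \ge 1$, given that $\alpha, \beta, \gamma \in \bZ_p$ and that the conic $\cQ_1 : pU^2 - V^2 - (\beta PQ)T^2 = 0$ has a nontrivial integral point with coprime coordinates and $uvt \ne 0$. The strategy is to feed this point into the conic equation modulo (powers of) $p$ and exploit that $p \equiv 5 \pmod 8$ together with $(A1)$ — in particular, that $-1$ is a square mod $p$ but $2$ and $3$ (hence $6$) have prescribed quadratic characters. First I would reduce the conic equation mod $p$ to obtain $V^2 + (\beta PQ)T^2 \equiv 0 \pmod p$, and analyze the $p$-adic valuations of the three terms $pU^2$, $V^2$, $\beta PQ T^2$.

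The key step is a valuation bookkeeping argument. Write $\mu = v_p(\beta) \ge 0$; I want to show $\mu \ge 1$. Note that $P = p\alpha^2 + 2\beta^2 - 2p\gamma^2$, so modulo $p$ we have $P \equiv 2\beta^2 \pmod p$, whence $v_p(P) = 2\mu$ unless the leading behavior degenerates — but since $\beta \in \bZ_p$, in fact $v_p(P) \ge \min(1, 2\mu)$, and if $\mu = 0$ then $v_p(P) = 0$ with $P \equiv 2\beta^2 \pmod p$. Similarly $Q = 4bdp\gamma - 4b^2\beta - d^2 p\beta$, and since $b \not\equiv 0 \pmod p$ by $(A2)$ and $b, d$ are units mod $p$, we get $Q \equiv -4b^2\beta \pmod p$, so $v_p(Q) = \mu$ when $\mu \ge 1$, and $Q \not\equiv 0 \pmod p$ precisely when $\mu = 0$. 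The plan is then to suppose for contradiction that $\mu = 0$, so that $P, Q \in \bZ_p^\times$ and $\beta PQ \equiv 2\beta^2 \cdot (-4b^2\beta) = -8b^2\beta^3 \pmod p$, a unit; then the conic relation $V^2 + (\beta PQ) T^2 \equiv p U^2 \equiv 0 \pmod p$ forces $(V/T)^2 \equiv -\beta PQ \equiv 8b^2\beta^3 \pmod p$ (after checking $t \not\equiv 0 \pmod p$, which follows since if $p \mid t$ then $p \mid V$, then $p^2 \mid pU^2$ so $p \mid U$, contradicting $\gcd(u,v,t)=1$). Thus $8\beta^3$, equivalently $2\beta$ (absorbing the square $4b^2\beta^2$), is a quadratic residue mod $p$.

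Now I would derive the contradiction from the quadratic-character hypotheses. Since $p \equiv 5 \pmod 8$ we have $\left(\tfrac{2}{p}\right) = -1$, and $(A1)$ says $\left(\tfrac{3}{p}\right) = -1$, hence $\left(\tfrac{6}{p}\right) = 1$. The condition that $2\beta$ is a square mod $p$ is then equivalent to $\left(\tfrac{\beta}{p}\right) = \left(\tfrac{2}{p}\right) = -1$. At this point I expect the argument closes by extracting a parallel congruence from looking at the conic (or from the defining equation of $\cX$/$\cY$ restricted to the relevant point, or from a companion conic $\cQ_1$ computed differently) that also constrains $\left(\tfrac{\beta}{p}\right)$ but with the opposite sign — the likely source is that $\beta P Q$ must simultaneously be analyzed via $P \equiv 2\beta^2$ and the factorization structure, giving $\left(\tfrac{\beta P Q}{p}\right) = \left(\tfrac{\beta}{p}\right)^3 \left(\tfrac{2}{p}\right)\left(\tfrac{-4b^2}{p}\right) = \left(\tfrac{\beta}{p}\right)\left(\tfrac{2}{p}\right) = -\left(\tfrac{\beta}{p}\right)$, while the conic forces this to equal $\left(\tfrac{-1}{p}\right) = 1$; combined with the requirement that $-\beta PQ$ be a square we get both $\left(\tfrac{\beta}{p}\right) = -1$ and a forced value contradicting it.

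**Main obstacle.** The delicate point is the valuation analysis when $\mu = 0$ is assumed for contradiction: one must carefully verify that $t$ is a $p$-adic unit (using $\gcd(u,v,t) = 1$ and $uvt \ne 0$) so that dividing by $t^2$ mod $p$ is legitimate, and that $P$ and $Q$ are genuinely $p$-adic units under $\mu = 0$ (this uses $b \not\equiv 0 \pmod p$ crucially for $Q$, and $\beta \in \bZ_p^\times$ for $P$). Once the reduction to "$2\beta$ is a QR mod $p$" is clean, the contradiction with $\left(\tfrac{2}{p}\right)\left(\tfrac{3}{p}\right)$ is a short Legendre-symbol computation; the real work is the case analysis ensuring no term unexpectedly vanishes mod $p$.
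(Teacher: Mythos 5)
Your overall strategy is the paper's: reduce the conic relation $pu^2 - v^2 - \beta PQ t^2 = 0$ modulo $p$, exploit $P \equiv 2\beta^2 \pmod p$ and $Q \equiv -4b^2\beta \pmod p$, and derive a contradiction from the fact that $2$ is a quadratic nonresidue mod $p$ (since $p \equiv 5 \pmod 8$). The $\gcd(u,v,t)=1$ trick to rule out $p\mid t$ is also exactly what the paper does. But there is an arithmetic slip that derails the final step: you wrote $\beta PQ \equiv 2\beta^2\cdot(-4b^2\beta) = -8b^2\beta^3 \pmod p$, which is $PQ$, not $\beta PQ$. The correct reduction is $\beta PQ \equiv -8b^2\beta^4 \pmod p$, hence
\begin{equation*}
-\beta PQ \equiv 8b^2\beta^4 = 2\,(2b\beta^2)^2 \pmod p,
\end{equation*}
which is $2$ times a square. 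Once $t$ is a $p$-adic unit, the conic modulo $p$ reads $(v/t)^2 \equiv 2\,(2b\beta^2)^2 \pmod p$, so $2$ would be a quadratic residue modulo $p$ --- contradiction, full stop. The odd power $\beta^3$ in your version makes $\left(\tfrac{\beta}{p}\right)$ appear relevant and forces you to reach for an (unidentified) ``parallel congruence'' to pin it down; that second constraint does not exist and is not needed. Correcting the exponent to $\beta^4$ eliminates the gap entirely and also shows that your appeals to $(A1)$ and to $\left(\tfrac{-1}{p}\right) = 1$ are superfluous --- the only quadratic-character input used is $\left(\tfrac{2}{p}\right) = -1$.

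One further comment on structure: the paper does not assume $v_p(\beta)=0$ from the outset; it deduces unconditionally from the mod-$p$ congruence that $v \equiv 0$ and $\beta t \equiv 0 \pmod p$, and only then splits on whether $p\mid\beta$, showing that $p\nmid\beta$ forces $p\mid t$ and hence $p\mid u$, violating $\gcd(u,v,t)=1$. This is logically equivalent to your contradiction framing, but slightly cleaner because it avoids having to separately verify that $P,Q \in \bZ_p^{\times}$ under the assumption $\mu=0$; those facts are simply read off the congruences $P \equiv 2\beta^2$, $Q \equiv -4b^2\beta$ as needed.
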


\begin{proof}

Let $\cQ_1$ be the conic defined in $(A3)$. Assume that $(u, v, t) \in \bZ^3$ belongs to $\cQ_1(\bQ)$ such that $uvt \ne 0$ and $\gcd(u, v, t) = 1$. We see that
\begin{equation*}
pu^2 - v^2 - \beta PQt^2 = 0,
\end{equation*}
where $P$ and $Q$ are defined by $(\ref{eqn-P})$ and $(\ref{eqn-Q})$, respectively. Taking the identity above modulo $p$, it follows that
\begin{equation*}
v^2 \equiv 8b^2\beta^4t^2 \pmod{p}.
\end{equation*}
Since $2$ is a quadratic non-residue in $\bF_p^{\times}$ and $b \not\equiv 0 \pmod{p}$, we deduce from the congruence above that
\begin{equation*}
v \equiv \beta t \equiv 0 \pmod{p}.
\end{equation*}
Assume that $\beta \not\equiv 0 \pmod{p}$. Then $v \equiv t \equiv 0 \pmod{p}$, and hence $v = pv_1$ and $t = pt_1$ for some integers $v_1, t_1$. Substituting $v$ and $t$ into the defining equation of the conic $\cQ_1$, we get
\begin{equation*}
u^2 - pv_1^2 - p\beta PQ t_1^2 = 0,
\end{equation*}
and hence it follows that $p$ divides $u$. Thus $p$ divides $\gcd(u, v, t)$, which is a contradiction. Therefore there is a rational number $\bar{\beta} \in \bQ$ such that
$\beta = p\bar{\beta}$ and $\bar{\beta} \in \bZ_p$.

\end{proof}

\begin{remark}
\label{p-divisibility-P-Q-remark}

By Lemma \ref{p-divisibility-beta-lemma}, one knows that if $(\alpha, \beta, \gamma) \in \bQ^3$ satisfies $(A3)$ and $\alpha, \beta, \gamma \in \bZ_p$, then there is a rational number $\bar{\beta}$ such that $\beta = p\bar{\beta}$ and $\bar{\beta} \in \bZ_p$. Hence one sees that $P = pP_1$ and $Q= pQ_1$, where
\begin{align*}
P_1 &:= \alpha^2 + 2p\bar{\beta}^2 - 2\gamma^2, \\
Q_1 &:= 4bd\gamma - 4b^2\bar{\beta} - d^2p\bar{\beta}.
\end{align*}
We also see that $P_1$ and $Q_1$ belong to $\bZ_p$.

\end{remark}

In the proofs of Corollary \ref{hyperelliptic-curves-hp-corollary} and Corollary \ref{n-even-hyperelliptic-curve-hp-corollary} below, we will use Hensel's lemma to deduce local solvability at primes $2$ and $p$. For the sake of self-containedness, we recall the statement of Hensel's lemma.

\begin{theorem}
\label{Theorem-Hensel-lemma}
$(\text{Hensel's lemma, \cite[Theorem 3, Section 5.2]{borevich-shafarevich}})$

Let $p$ be a prime. Let $F(x_1, x_2, \ldots, x_n) \in \bZ_p[x_1, x_2, \ldots, x_n]$ be a polynomial whose coefficients are $p$-adic integers. Let $\delta \ge 0$ be a nonnegative integer. Assume that there are $p$-adic integers $a_1, a_2, \ldots, a_n$ such that for some integer $1 \le k \le n$, we have
\begin{align*}
F(a_1, a_2, \ldots, a_n) &\equiv 0 \pmod{p^{2\delta + 1}}, \\
\dfrac{\partial F}{\partial x_k}(a_1, a_2, \ldots, a_n) &\equiv 0 \pmod{p^{\delta}}, \\
\dfrac{\partial F}{\partial x_k}(a_1, a_2, \ldots, a_n) &\not\equiv 0 \pmod{p^{\delta + 1}}.
\end{align*}
Then there exist $p$-adic integers $\theta_1, \theta_2, \ldots, \theta_n$ such that $F(\theta_1, \theta_2, \ldots, \theta_n) = 0$.

\end{theorem}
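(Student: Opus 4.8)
The statement is the multivariable form of Hensel's lemma, and the plan is to bootstrap it from the one-variable version by a Newton iteration. First I would freeze every variable except $x_k$: put $\theta_i := a_i$ for $i \ne k$ and consider the single-variable polynomial $g(x) := F(a_1, \dots, a_{k-1}, x, a_{k+1}, \dots, a_n) \in \bZ_p[x]$. The three hypotheses then become $v_p(g(a_k)) \ge 2\delta + 1$ and $v_p(g'(a_k)) = \delta$ (the latter combining the congruences mod $p^\delta$ and mod $p^{\delta+1}$), so in particular $v_p(g(a_k)) > 2\,v_p(g'(a_k))$. It then suffices to produce $\theta_k \in \bZ_p$ with $g(\theta_k) = 0$.

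The one structural input I would record is the integral Taylor expansion: for a polynomial $g(x) = \sum_i b_i x^i$ with $b_i \in \bZ_p$ and any $a, t \in \bZ_p$, the binomial theorem gives $g(a+t) = g(a) + g'(a)\,t + t^2 h(a, t)$, where $h$ is a polynomial with coefficients in $\bZ_p$; no denominators appear because the binomial coefficients are integers. The same applies to $g'$, giving $g'(a+t) = g'(a) + t\,r(a,t)$ with $r$ again having coefficients in $\bZ_p$.

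Next I would run the Newton iteration $a^{(0)} := a_k$, $a^{(m+1)} := a^{(m)} - g(a^{(m)})/g'(a^{(m)})$, and prove by induction that for all $m \ge 0$: \textup{(i)} $a^{(m)} \in \bZ_p$; \textup{(ii)} $v_p(g'(a^{(m)})) = \delta$; \textup{(iii)} $v_p(g(a^{(m)})) \ge 2\delta + 1 + m$. For the inductive step, set $t := -g(a^{(m)})/g'(a^{(m)})$; by \textup{(ii)} and \textup{(iii)} one has $v_p(t) \ge \delta + 1 + m \ge 1$, so $a^{(m+1)} = a^{(m)} + t \in \bZ_p$. Feeding $t$ into the Taylor expansions, $g(a^{(m+1)}) = g(a^{(m)}) + g'(a^{(m)})t + t^2 h(a^{(m)},t) = t^2 h(a^{(m)}, t)$, hence $v_p(g(a^{(m+1)})) \ge 2v_p(t) \ge 2\delta + 2 + 2m \ge 2\delta + 1 + (m+1)$; and $g'(a^{(m+1)}) = g'(a^{(m)}) + t\,r(a^{(m)},t)$ with $v_p(t) > \delta = v_p(g'(a^{(m)}))$, so $v_p(g'(a^{(m+1)})) = \delta$.

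Finally, $v_p(a^{(m+1)} - a^{(m)}) = v_p(g(a^{(m)})) - \delta \ge m + 1 \to \infty$, so $(a^{(m)})$ is $p$-adically Cauchy and converges to some $\theta_k \in \bZ_p$; continuity of $g$ together with \textup{(iii)} gives $g(\theta_k) = \lim_m g(a^{(m)}) = 0$. Taking $\theta_i = a_i$ for $i \ne k$ and this $\theta_k$ completes the proof. I expect the only subtle point to be clause \textup{(ii)} — that the derivative's valuation stays exactly $\delta$ rather than merely $\ge \delta$ — since this is what keeps the Newton quotient well-defined and drives the convergence; it holds precisely because $v_p(t) > \delta$ forces the linear term in the Taylor expansion of $g'$ to dominate.
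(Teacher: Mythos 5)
The paper does not prove this statement; it is quoted verbatim from Borevich--Shafarevich (\cite[Theorem 3, Section 5.2]{borevich-shafarevich}) and used as a black box in Corollaries \ref{hyperelliptic-curves-hp-corollary} and \ref{n-even-hyperelliptic-curve-hp-corollary}, so there is no in-paper proof to compare against. Your argument is correct and is essentially the standard one found in the cited source: reduce to one variable by freezing $x_i = a_i$ for $i \ne k$, observe that the three congruences translate to $v_p(g(a_k)) \ge 2\delta + 1$ and $v_p(g'(a_k)) = \delta$, run Newton iteration, and prove inductively that (i) the iterates stay in $\bZ_p$, (ii) the derivative's valuation is frozen at exactly $\delta$ (by the ultrametric inequality, since the increment's valuation strictly exceeds $\delta$), and (iii) $v_p(g(a^{(m)}))$ increases without bound, forcing Cauchy convergence. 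You correctly isolate the one subtle point — that (ii) must be an equality and not merely a lower bound, since otherwise the Newton quotient's valuation and hence the convergence estimate would degrade — and you correctly handle it. The only stylistic remark is that you could note explicitly that $g(a^{(m)}) + g'(a^{(m)})t = 0$ by the definition of $t$, which is why the Taylor expansion collapses to $t^2 h(a^{(m)},t)$; this is implicit in your write-up but worth a sentence.
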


The following result provides a sufficient condition under which certain hyperelliptic curves of odd genus satisfy HP1 and HP2.

\begin{corollary}
\label{hyperelliptic-curves-hp-corollary}

We maintain the same notation and assumptions as in Theorem \ref{hyperelliptic-curves-theorem}. Assume $(A1)-(A3)$ and $(S)$ in Theorem \ref{hyperelliptic-curves-theorem}. Assume further that the following are true.

\begin{itemize}

\item [(A4)] $\alpha, \beta, \gamma \in \bZ_2^{\times}$, $\alpha, \gamma, d \in \bZ_p^{\times}$ and $\beta \in \bZ_p$.

\item [(A5)] $\gamma Q_1 + bdP_1 \equiv 0 \pmod{p^2}$, where $\bar{\beta}$, $P_1$ and $Q_1$ are defined as in Remark \ref{p-divisibility-P-Q-remark}.

\item [(A6)] $n \not\equiv -2\left(\dfrac{\gamma}{\alpha}\right)^2 \pmod{p}$, $n \ge 3$ and $n$ is odd.

\end{itemize}
Let $\cC$ be the smooth projective model of the affine curve defined by $(\ref{eqn-hyperelliptic-curve-C})$ in Theorem \ref{hyperelliptic-curves-theorem}. Then $\cC$ satisfies HP1 and HP2.

\end{corollary}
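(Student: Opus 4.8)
The plan is to invoke Theorem \ref{hyperelliptic-curves-theorem}, which already gives us $\cC(\bQ_l) \ne \emptyset$ for all $l \ne 2, p$, that $\cC(\bA_\bQ)^{\Br} = \emptyset$, and that $\cC$ satisfies HP2. Thus the entire content of the corollary is to establish the two remaining local points: $\cC(\bQ_2) \ne \emptyset$ and $\cC(\bQ_p) \ne \emptyset$. Once these are shown, $\cC$ is everywhere locally solvable with $\cC(\bA_\bQ)^{\Br} = \emptyset$, so it satisfies HP1, and HP2 was already noted, completing the proof. So the whole argument reduces to two Hensel's-lemma computations using Theorem \ref{Theorem-Hensel-lemma}.

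First I would handle $l = 2$. Using $(A4)$ we have $\alpha, \beta, \gamma$ all $2$-adic units and $b, d$ odd (from $(A2)$), so $P = p\alpha^2 + 2\beta^2 - 2p\gamma^2$ and $Q = 4bdp\gamma - 4b^2\beta - d^2p\beta$ have controlled $2$-adic valuations; in particular $P$ is a $2$-adic unit and $v_2(Q) = 0$ as well since $d^2p\beta$ is odd. I would look for a point on the affine model $z^2 = P_{p,b,d,\alpha,\beta,\gamma}(x)$ with $x \in 2\bZ_2$ (or $x$ a unit, whichever makes the right side a square mod a suitable power of $2$). Setting $x = 0$ gives $z^2 = 2\beta^2 Q^2$, which is not a square in $\bQ_2$; so instead I would try $x$ a unit and track the leading behavior, or use the homogenized curve $\cC^\ast$ with $y$ even. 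The cleanest route is probably: reduce $P_{p,b,d,\alpha,\beta,\gamma}(x) \bmod 8$ for an appropriate unit choice of $x$, check it is $\equiv 1 \pmod 8$ (a square in $\bZ_2^\times$), then apply Hensel with $\delta = 1$ in the variable $z$. The odd-genus hypothesis and $n \ge 3$ from $(A6)$, together with $n \not\equiv 0 \pmod 4$ implicitly, should be what makes the leading term $p\alpha^2 Q^2 x^{2n+2}$ combine correctly with the rest mod $8$.

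Next I would handle $l = p$. By $(A4)$, $\beta \in \bZ_p$ but $\alpha, \gamma, d \in \bZ_p^\times$, so Lemma \ref{p-divisibility-beta-lemma} and Remark \ref{p-divisibility-P-Q-remark} apply: $\beta = p\bar\beta$ with $\bar\beta \in \bZ_p$, and $P = pP_1$, $Q = pQ_1$ with $P_1, Q_1 \in \bZ_p$; moreover $P_1 \equiv \alpha^2 - 2\gamma^2 \pmod p$ and $Q_1 \equiv 4bd\gamma - 4b^2\bar\beta - d^2 p\bar\beta \pmod p$. The natural test point, as in \textbf{Case III} of the proof of Theorem \ref{hyperelliptic-curves-theorem}, is near $(x, z) = (1, \sqrt{2p}(\gamma Q + bdP))$; over $\bQ_p$ the obstruction is that $\sqrt{2p}$ is not in $\bQ_p$. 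Writing $F(1, z) = p\alpha^2 Q^2 + 2p(bdP)^2 + 4b^2\beta PQ + \beta p P Q d^2 + 2\beta^2 Q^2 - z^2$ and substituting $P = pP_1$, $Q = pQ_1$, $\beta = p\bar\beta$, I would factor out the appropriate power of $p$; the identity from Theorem \ref{hyperelliptic-curves-theorem}'s Case III rewrites the value as $2p(\gamma Q + bdP)^2 = 2p^3(\gamma Q_1 + bdP_1)^2$, and hypothesis $(A5)$, $\gamma Q_1 + bdP_1 \equiv 0 \pmod{p^2}$, makes this term extremely small $p$-adically. This should let me solve $z^2 = (\text{unit})\cdot p^{2k}$ for suitable even $2k$ via Hensel in $z$, or, if a parity issue arises, perturb $x$ slightly. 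The congruence condition $n \not\equiv -2(\gamma/\alpha)^2 \pmod p$ in $(A6)$ is what I expect to guarantee that the relevant partial derivative or leading coefficient is a $p$-adic unit so that Hensel applies cleanly.

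The main obstacle I anticipate is the $l = 2$ case: unlike $l = p$, there is no normalizing lemma reducing the shape of the equation, and squares in $\bQ_2$ require control mod $8$ rather than mod $p$, so one must carefully choose the test value of $x$ (and possibly use the homogenized curve $\cC^\ast$) and verify the resulting residue is $1 \pmod 8$, using $p \equiv 5 \pmod 8$, the oddness of $b, d$, and the unit conditions in $(A4)$. The $l = p$ case, by contrast, is essentially forced by $(A5)$ once the divisibility bookkeeping from Remark \ref{p-divisibility-P-Q-remark} is in place.
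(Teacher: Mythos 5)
Your high-level plan is right: by Theorem \ref{hyperelliptic-curves-theorem} everything reduces to showing $\cC(\bQ_2)\ne\emptyset$ and $\cC(\bQ_p)\ne\emptyset$, and Hensel's lemma (Theorem \ref{Theorem-Hensel-lemma}) is the tool. But your execution goes wrong in both cases because you plan to apply Hensel in the $z$-variable, whereas the paper applies it in the $x$-variable at the fixed point $(x,z)=(1,0)$. For $l=p$ this is not a removable ``parity issue'': the Case~III identity gives $F(1,0)=2p(\gamma Q+bdP)^2=2p^3(\gamma Q_1+bdP_1)^2$, which always has \emph{odd} $p$-adic valuation (the square absorbs an even power, leaving the odd factor $2p^3$, and $2$ is a non-residue since $p\equiv 5\pmod 8$), so $z^2=F(1,0)$ has no solution for $x=1$ and your primary plan is structurally impossible. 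What the paper does instead is show $F(1,0)\equiv 0\pmod{p^7}$ (from $(A5)$, which gives $\gamma Q_1+bdP_1\equiv 0\pmod{p^2}$) and $v_p\bigl(\tfrac{\partial F}{\partial x}(1,0)\bigr)=3$ exactly — the exactness is precisely where $(A6)$ enters, since $n\not\equiv -2(\gamma/\alpha)^2\pmod p$ prevents the leading bracket from vanishing mod $p$ — and then applies Hensel with $\delta=3$ in $x$, finding a point with $z=0$ (a ramification point of the double cover near $x=1$).

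For $l=2$ your plan also misses the mechanism: the paper again Hensels in $x$ at $(1,0)$, now with $\delta=1$, checking $F(1,0)\equiv 0\pmod 8$ and that $\tfrac{\partial F}{\partial x}(1,0)$ has $2$-adic valuation exactly $1$. The role of $n$ odd is not to make the leading term of $F$ ``combine correctly mod $8$'' — it is to force $2n+2\equiv 0\pmod 4$, killing the first term of $\tfrac{\partial F}{\partial x}(1,0)$ modulo $4$ and leaving $\tfrac{\partial F}{\partial x}(1,0)\equiv 2d^2p^4\bar\beta P_1Q_1\pmod 4$, which has valuation exactly $1$ by $(A4)$. Your alternative of finding an $x$ with $P_{p,b,d,\alpha,\beta,\gamma}(x)\equiv 1\pmod 8$ and Henseling in $z$ is not justified by the hypotheses: for $x$ even the value has $v_2=1$ (never a square), and for $x$ odd one only knows $P(1)\equiv 0\pmod 8$, so producing an odd $x$ with the desired residue would in effect require first establishing the paper's ramification-point argument anyway. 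So while your outline is sound, the working details of both local computations need to be replaced by the $x$-variable Hensel argument at $(1,0)$, with $(A5)$ controlling the value and $(A6)$, respectively the oddness of $n$, controlling the $x$-derivative at $p$ and at $2$.
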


\begin{proof}

By Theorem \ref{hyperelliptic-curves-theorem}, it suffices to prove that $\cC$ is locally solvable at $2$ and $p$.

$\star$ \textit{Step 1. $\cC$ is locally solvable at $p$.}

We will use Theorem \ref{Theorem-Hensel-lemma} with the exponent $\delta = 3$ to prove local solvability of $\cC$ at $p$. We consider the following system of equations
\begin{align}
\label{hensel-p}
\begin{cases}
F(x , z) &= p\alpha^2Q^2 x^{2n + 2} + (2b^2Px^2 + \beta Q)(d^2pPx^2 + 2\beta Q) - z^2 \equiv 0 \pmod{p^7} \\
\dfrac{\partial{F}}{\partial{x}}(x, z) &= (2n + 2)p\alpha^2 Q^2x^{2n + 1} + 4b^2Px(d^2pPx^2 + 2\beta Q) + 2d^2pPx(2b^2Px^2 + \beta Q) \equiv 0 \pmod{p^3} \\
\dfrac{\partial{F}}{\partial{x}}(x, z) &\not\equiv 0 \pmod{p^4}.
\end{cases}
\end{align}
Repeating the same arguments as in \textbf{Case III} of the proof of Theorem \ref{hyperelliptic-curves-theorem}, we deduce that
\begin{equation*}
F(1, 0) = 2p(\gamma Q + bdP)^2.
\end{equation*}
By Remark \ref{p-divisibility-P-Q-remark}, one knows that $P = pP_1$ and $Q = pQ_1$. Hence
\begin{equation}
\label{eqn-F}
F(1, 0) = 2p^3\left(\gamma Q_1 + bdP_1\right)^2.
\end{equation}
Thus it follows from $(A5)$ and $(\ref{eqn-F})$ that $F(1, 0) \equiv 0 \pmod{p^7}$. On the other hand, we see that
\begin{equation}
\label{der-F-eqn}
\dfrac{\partial F}{\partial x}(1, 0) = p^3\left((2n + 2)\alpha^2Q_1^2 + 4b^2P_1(d^2P_1 + 2\bar{\beta}Q_1) + 2d^2P_1(2b^2P_1 + p\bar{\beta}Q_1)\right).
\end{equation}
Since $\alpha, \bar{\beta}, \gamma$ and $P_1, Q_1$ are in $\bZ_p$, one obtains that
\begin{equation*}
\dfrac{\partial F}{\partial x}(1, 0) \equiv 0 \pmod{p^3}.
\end{equation*}

Assume that
\begin{equation}
\label{p^4-der-F-eqn}
\dfrac{1}{p^3}\left(\dfrac{\partial F}{\partial x}(1, 0)\right) \equiv 0 \pmod{p}.
\end{equation}
Since $\gamma \in \bZ_p^{\times}$, it follows from $(A5)$ that
\begin{equation}
\label{relation-PQ}
Q_1 \equiv -\dfrac{bd}{\gamma}P_1 \pmod{p}.
\end{equation}
Upon replacing $Q_1$ by $-\dfrac{bd}{\gamma}P_1$ in $(\ref{p^4-der-F-eqn})$, we deduce that
\begin{equation*}
\dfrac{2P_1^2bd}{\gamma^2}\left((n + 1)\alpha^2bd + \gamma(4bd\gamma - 4b^2\bar{\beta} - d^2p\bar{\beta}) \right) \equiv 0 \pmod{p}.
\end{equation*}
Thus it follows from the equation of $Q_1$ in Remark \ref{p-divisibility-P-Q-remark} that
\begin{equation*}
\dfrac{2P_1^2bd}{\gamma^2}\left((n + 1)\alpha^2bd + \gamma Q_1\right) \equiv 0 \pmod{p}.
\end{equation*}
Note that $P_1 \in \bZ_p^{\times}$; otherwise, we deduce from the equation of $P_1$ in Remark \ref{p-divisibility-P-Q-remark} that
\begin{equation*}
\alpha^2 - 2\gamma^2 \equiv P_1 \equiv 0 \pmod{p}.
\end{equation*}
Since $\alpha, \gamma \in \bZ_p^{\times}$, it follows from the congruence above that $2 \equiv \left(\dfrac{\alpha}{\gamma}\right)^2 \pmod{p}$, which is a contradiction to the fact that
$p \equiv 5 \pmod{8}$. Thus $P_1 \in \bZ_p^{\times}$. Since $2, b, d, \gamma$ and $P_1$ are in $\bZ_p^{\times}$, we obtain that
\begin{equation*}
(n + 1)\alpha^2bd  + \gamma Q_1 \equiv 0 \pmod{p}.
\end{equation*}
Since $\gamma Q_1 \equiv -bdP_1 \pmod{p}$ and $b, d \in \bZ_p^{\times}$, we deduce from the last congruence that
\begin{align*}
(n + 1)\alpha^2 \equiv P_1 \equiv (\alpha^2 - 2\gamma^2) \pmod{p}.
\end{align*}
Since $\alpha, \gamma \in \bZ_p^{\times}$, it follows that $n \equiv -2\left(\dfrac{\gamma}{\alpha}\right)^2 \pmod{p}$, which is a contradiction to $(A6)$. Thus the system $(\ref{hensel-p})$ has a solution $(x, z) = (1, 0)$. By Hensel's lemma, $\cC$ is locally solvable at $p$.

$\star$ \textit{Step 2. $\cC$ is locally solvable at $2$.}

We will use Theorem \ref{Theorem-Hensel-lemma} with the exponent $\delta = 1$ to prove local solvability of $\cC$ at $2$. We consider the following system of equations
\begin{align}
\label{hensel-2}
\begin{cases}
F(x,z) &\equiv 0 \pmod{2^3} \\
\dfrac{\partial{F}}{\partial{x}}(x,z) &\equiv 0 \pmod{2} \\
\dfrac{\partial{F}}{\partial{x}}(x,z) &\not\equiv 0 \pmod{2^2}.
\end{cases}
\end{align}

We see from $(\ref{eqn-F})$ and the equations of $P_1$ and $Q_1$ in Remark \ref{p-divisibility-P-Q-remark} that
\begin{equation*}
F(1, 0) = 2p^3\left(\gamma(4bd\gamma - 4b^2\bar{\beta} - d^2p\bar{\beta}) + bd(\alpha^2 + 2p\bar{\beta}^2 - 2\gamma^2)\right)^2.
\end{equation*}
Since $\beta$ is in $\bZ_2^{\times}$ and $p \ne 2$, we see that $\bar{\beta}$ is also in $\bZ_2^{\times}$. Since $b, d, p, \alpha, \bar{\beta}, \gamma \in \bZ_2^{\times}$, we see that
\begin{equation*}
-d^2p \bar{\beta} \gamma + bd\alpha^2 \equiv 0 \pmod{2}.
\end{equation*}
Let $v_2$ denote the $2$-adic valuation. We see that
\begin{align*}
v_2\left(\gamma(4bd\gamma - 4b^2\bar{\beta} - d^2p\bar{\beta}) + bd(\alpha^2 + 2p\bar{\beta}^2 - 2\gamma^2)\right) &= v_2\left((4\gamma(bd\gamma - b^2\bar{\beta}) + 2bd(p\bar{\beta}^2 - \gamma^2)) + (-d^2p \bar{\beta} \gamma + bd\alpha^2)\right) \\
&\ge \min\left(v_2(4\gamma(bd\gamma - b^2\bar{\beta}) + 2bd(p\bar{\beta}^2 - \gamma^2)), v_2(-d^2p \bar{\beta} \gamma + bd\alpha^2)\right) \\
 &\ge 1.
\end{align*}
Hence $F(1, 0) \equiv 0 \pmod{2^3}$. On the other hand, we know from $(\ref{der-F-eqn})$ that $\dfrac{\partial{F}}{\partial{x}}(1, 0) \equiv 0 \pmod{2}$. Since $n$ is odd, $(2n + 2) \equiv 2(n + 1) \equiv 0 \pmod{2^2}$. Hence it follows from $(\ref{der-F-eqn})$ that
\begin{equation*}
\dfrac{\partial F}{\partial x}(1, 0) \equiv 2d^2p^4\bar{\beta}P_1Q_1 \pmod{2^2}.
\end{equation*}
By $(A4)$ and the equations of $P_1$ and $Q_1$ in Remark \ref{p-divisibility-P-Q-remark}, we know that
\begin{equation*}
d^2p^4\bar{\beta}P_1Q_1 \not\equiv 0 \pmod{2}.
\end{equation*}
Hence we deduce that $\dfrac{\partial F}{\partial x}(1, 0) \not\equiv 0 \pmod{2^2}$. Thus the system $(\ref{hensel-2})$ has a solution $(x, z) = (1, 0)$. By Hensel's lemma, $\cC$ is locally solvable at $2$, and hence our contention follows.

\end{proof}

\begin{remark}
\label{local-solubility-at-2-p-remark}

Assuming $(A1), (A2), (A3), (A5)$ and $(S)$. Following closely the proof of Corollary \ref{hyperelliptic-curves-hp-corollary}, we note that the following are true.

\begin{itemize}

\item [(1)] if $\alpha, \beta, \gamma \in \bZ_2^{\times}$ and $n$ is odd, then $\cC$ is locally solvable at $2$.

\item [(2)] if $\alpha, \gamma, d \in \bZ_p^{\times}$, $\beta \in \bZ_p$, $n \ge 2$ and $n \not\equiv -2\left(\dfrac{\gamma}{\alpha}\right)^2 \pmod{p}$, then $\cC$ is locally solvable at $p$.

\end{itemize}

\end{remark}

We now prove a sufficient condition under which certain hyperelliptic curves of genus $n \equiv 2 \pmod{4}$ satisfy HP1 and HP2.

\begin{corollary}
\label{n-even-hyperelliptic-curve-hp-corollary}

We maintain the same notation as in Theorem \ref{hyperelliptic-curves-theorem} and Corollary \ref{hyperelliptic-curves-hp-corollary}. Assume $(A1)-(A5)$ and $(S)$. Assume further that the following are true.

\begin{itemize}

\item [(B1)] $bd - \bar{\beta}\gamma \equiv 0 \pmod{4}$.

\item [(B2)] $n \not\equiv -2\left(\dfrac{\gamma}{\alpha}\right)^2 \pmod{p}$, $n \ge 2$ and $n \equiv 2 \pmod{4}$.

\end{itemize}
Let $\cC$ be the smooth projective model defined by $(\ref{eqn-hyperelliptic-curve-C})$ in Theorem \ref{hyperelliptic-curves-theorem}. Then $\cC$ satisfies HP1 and HP2.

\end{corollary}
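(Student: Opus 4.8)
The plan is to mimic the proof of Corollary \ref{hyperelliptic-curves-hp-corollary} essentially verbatim, since by Theorem \ref{hyperelliptic-curves-theorem} it again suffices to verify that $\cC$ is locally solvable at the two remaining primes $2$ and $p$; the only new point is that the genus $n$ is now even with $n \equiv 2 \pmod 4$, which changes the $2$-adic divisibility bookkeeping but not the $p$-adic one. For local solvability at $p$, I would invoke item (2) of Remark \ref{local-solubility-at-2-p-remark}: the hypotheses $(A4)$ (which gives $\alpha, \gamma, d \in \bZ_p^{\times}$ and $\beta \in \bZ_p$), together with $(A5)$, $(A3)$, $(S)$, and the congruence condition $n \not\equiv -2(\gamma/\alpha)^2 \pmod p$ from $(B2)$, are exactly what is needed, so the Hensel argument with $\delta = 3$ at the candidate point $(x,z) = (1,0)$ goes through word for word as in Step 1 of Corollary \ref{hyperelliptic-curves-hp-corollary}. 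No new idea is required here.

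For local solvability at $2$, I would again use Hensel's lemma (Theorem \ref{Theorem-Hensel-lemma}) at $(x,z) = (1,0)$ with the exponent $\delta = 1$, testing the system $(\ref{hensel-2})$. The verification $F(1,0) \equiv 0 \pmod{2^3}$ is literally the computation already done in Step 2 of Corollary \ref{hyperelliptic-curves-hp-corollary} and uses only $(A4)$. The genuinely new step is the non-vanishing condition $\tfrac{\partial F}{\partial x}(1,0) \not\equiv 0 \pmod{2^2}$: when $n$ was odd we had $2n+2 \equiv 0 \pmod 4$, so the leading-term contribution dropped out mod $4$ and the surviving term was $2 d^2 p^4 \bar\beta P_1 Q_1$, which is a unit times $2$. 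Now $n \equiv 2 \pmod 4$ forces $2n + 2 \equiv 6 \equiv 2 \pmod 4$, so the term $(2n+2)\alpha^2 Q^2 = (2n+2)p^2\alpha^2 Q_1^2$ contributes $2 p^2 \alpha^2 Q_1^2$ mod $4$ and no longer vanishes. Plugging into $(\ref{der-F-eqn})$ and reducing mod $4$ I would get
\begin{align*}
\dfrac{\partial F}{\partial x}(1,0) \equiv 2p^2 Q_1^2\left(\alpha^2 + p^2 d^2 \bar\beta P_1 Q_1^{-1}\cdot(\text{unit})\right) \pmod{2^2},
\end{align*}
or more cleanly: write $\tfrac{1}{2p^3}\tfrac{\partial F}{\partial x}(1,0) = (n+1)\alpha^2 Q_1^2 + 2b^2 P_1(d^2 P_1 + 2\bar\beta Q_1) + d^2 P_1(2b^2 P_1 + p\bar\beta Q_1)$ and reduce mod $2$, using that $n+1$ is odd, that $\bar\beta, P_1, Q_1, \alpha, b, d, p$ are all $2$-adic units (here I use $(A4)$ and the formulas of Remark \ref{p-divisibility-P-Q-remark}, noting $P_1 \equiv \alpha^2 \equiv 1$ and $Q_1 \equiv bd - \bar\beta\gamma \pmod 2$, wait---$Q_1 = 4bd\gamma - 4b^2\bar\beta - d^2 p\bar\beta \equiv \bar\beta \pmod 2$ actually). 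The upshot is that mod $2$ the expression becomes $\alpha^2 Q_1^2 + d^2 P_1 p \bar\beta Q_1 \equiv Q_1(\alpha^2 Q_1 + pd^2\bar\beta P_1)$, and I would use hypothesis $(B1)$, $bd - \bar\beta\gamma \equiv 0 \pmod 4$, precisely to control this combination and force it to be odd, so that $\tfrac{\partial F}{\partial x}(1,0)$ is exactly divisible by $2$.

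The main obstacle I anticipate is getting the mod-$4$ reduction of $\tfrac{\partial F}{\partial x}(1,0)$ exactly right and seeing transparently why $(B1)$ is the correct hypothesis to impose: the condition $bd \equiv \bar\beta\gamma \pmod 4$ must be what makes the relevant sum of $2$-adic units odd rather than even once the $(2n+2)$ term is present. I would work this out by substituting the explicit formulas for $P_1$ and $Q_1$, expanding $\tfrac{1}{2p^3}\tfrac{\partial F}{\partial x}(1,0)$ into a sum of monomials in $b, d, p, \alpha, \bar\beta, \gamma$, reducing each mod $2$ using $(A4)$, and then isolating the terms that survive; $(B1)$ should collapse the $\bar\beta\gamma$ versus $bd$ ambiguity so that an odd number of unit terms remains. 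Once that single congruence is pinned down, Hensel's lemma with $\delta = 1$ finishes local solvability at $2$, and combining with Step 1 and Theorem \ref{hyperelliptic-curves-theorem} yields that $\cC$ satisfies HP1; HP2 is already included in the conclusion of Theorem \ref{hyperelliptic-curves-theorem}. The rest is routine.
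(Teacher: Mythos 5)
Your plan for local solvability at $p$ is fine and matches the paper (invoke Remark \ref{local-solubility-at-2-p-remark}(2) using $(A4)$, $(A5)$, and the congruence on $n$ from $(B2)$). But the $2$-adic half of your argument contains a genuine gap: Hensel with exponent $\delta = 1$ at $(x,z)=(1,0)$ does \emph{not} work when $n \equiv 2 \pmod 4$, and condition $(B1)$ does something quite different from what you expect.

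Concretely, write $\tfrac{1}{p^3}\tfrac{\partial F}{\partial x}(1,0) = (2n+2)\alpha^2 Q_1^2 + 8b^2d^2P_1^2 + 8b^2\bar\beta P_1Q_1 + 2pd^2\bar\beta P_1Q_1$. For $n$ odd (the previous corollary) one has $2n+2 \equiv 0 \pmod 4$, the first summand dies mod $4$, and only the odd unit $2pd^2\bar\beta P_1Q_1$ survives, giving $v_2 = 1$; this is why $\delta=1$ was usable there. For $n \equiv 2 \pmod 4$ you correctly note that $2n+2 \equiv 2 \pmod 4$, so the leading term $2\alpha^2 Q_1^2$ reappears mod $4$ --- but that term is \emph{also} $2$ times a $2$-adic unit, and the two surviving terms $2\alpha^2 Q_1^2$ and $2pd^2\bar\beta P_1Q_1$ cancel mod $4$ (using $\alpha^2 \equiv 1$, $p \equiv 1 \pmod 4$, $P_1\equiv 1$, $Q_1 \equiv -\bar\beta \pmod 4$, one gets $2\bar\beta^2 - 2\bar\beta^2 \equiv 0 \pmod 4$). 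So $\tfrac{\partial F}{\partial x}(1,0) \equiv 0 \pmod{2^2}$, the $\delta=1$ nondegeneracy condition fails, and your computation aiming to make the sum of unit terms ``odd'' is headed in the wrong direction: you should be aiming to show the derivative has $2$-adic valuation exactly $2$, and then a mod-$8$ computation (using $P_1 \equiv 1$, $Q_1 \equiv -5\bar\beta \pmod{8}$, and $p \equiv 5 \pmod 8$) gives $\tfrac{\partial F}{\partial x}(1,0) \equiv 4 \pmod{2^3}$.

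The correct exponent is therefore $\delta = 2$, and this changes what you need from $F(1,0)$: Hensel now demands $F(1,0) \equiv 0 \pmod{2^5}$, not merely $\pmod{2^3}$. This is where $(B1)$ actually enters. By $(\ref{eqn-F})$ we have $F(1,0) = 2p^3(\gamma Q_1 + bdP_1)^2$, and $\gamma Q_1 + bdP_1 \equiv \gamma(-\bar\beta) + bd(1) = bd - \bar\beta\gamma \pmod 4$, so $(B1)$ forces $\gamma Q_1 + bdP_1 \equiv 0 \pmod 4$ and hence $F(1,0) \equiv 0 \pmod{2^5}$. In short, $(B1)$ is not there to control $\tfrac{\partial F}{\partial x}(1,0)$ at all --- it is there to upgrade the divisibility of $F(1,0)$ to what the larger $\delta$ requires.
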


\begin{proof}

By Theorem \ref{hyperelliptic-curves-theorem} and Remark \ref{local-solubility-at-2-p-remark}, it suffices to prove that $\cC$ is locally solvable at $2$. We will use Theorem \ref{Theorem-Hensel-lemma} with the exponent $\delta = 2$ to prove local solvability of $\cC$ at $2$. We consider the following
system of equations
\begin{align}
\label{locally-solvable-2-hensel}
\begin{cases}
F(x , z)  &\equiv 0 \pmod{2^5} \\
\dfrac{\partial{F}}{\partial{x}}(x, z) &\equiv 0 \pmod{2^2} \\
\dfrac{\partial{F}}{\partial{x}}(x, z) &\not\equiv 0 \pmod{2^3},
\end{cases}
\end{align}
where $F(x, z)$ denotes the polynomial in variables $x, z$ defined in $(\ref{hensel-p})$. Since $\alpha \in \bZ_2^{\times}$, we know that $\alpha \equiv 1 \pmod{4}$ or $\alpha \equiv 3 \pmod{4}$. Hence $\alpha^2 \equiv 1 \pmod{4}$. Similarly we know that $\bar{\beta}^2, \gamma^2, b^2, d^2 \equiv 1 \pmod{4}$. Since $p \equiv 5 \pmod{8}$, it follows that
\begin{align*}
P_1 &\equiv 1 \pmod{4}, \\
Q_1 &\equiv -\bar{\beta} \pmod{4}.
\end{align*}
By $(B1)$, we know that
\begin{equation*}
\gamma Q_1 + bd P_1 \equiv bd - \bar{\beta}\gamma \equiv 0 \pmod{4},
\end{equation*}
and hence we deduce from $(\ref{eqn-F})$ that $F(1, 0) \equiv 0 \pmod{2^5}$.

Since $n \equiv 2 \pmod{4}$, there is a non-negative integer $l$ such that $n = 4l + 2$. We know that
\begin{equation*}
4b^2P_1(d^2P_1 + 2\bar{\beta}Q_1) + 2d^2P_1(2b^2P_1 + p\bar{\beta}Q_1) = 8b^2d^2P_1^2 + 8b^2\bar{\beta}P_1Q_1 + 2pd^2\bar{\beta}P_1Q_1.
\end{equation*}
Hence it follows from $(\ref{der-F-eqn})$ that
\begin{equation*}
\dfrac{\partial F}{\partial x}(1, 0) \equiv 2\alpha^2Q_1^2 + 2pd^2\bar{\beta}P_1Q_1 \equiv 2 -2\bar{\beta}^2 \equiv 0 \pmod{2^2}.
\end{equation*}
Similarly one sees that
\begin{equation*}
\dfrac{\partial F}{\partial x}(1, 0) \equiv 5(8l + 6)\alpha^2Q_1^2 + 10pd^2\bar{\beta}P_1Q_1 \pmod{2^3}.
\end{equation*}
Since $\alpha, \bar{\beta}, \gamma, b, d \in \bZ_2^{\times}$, we deduce that $\alpha^2, \bar{\beta}^2, \gamma^2, b^2, d^2 \equiv 1 \pmod{2^3}$. Since $p \equiv 5 \pmod{2^3}$ and $bd\gamma - b^2\bar{\beta} \equiv 0 \pmod{2}$, it follows from the equations of $P_1$ and $Q_1$ in Remark \ref{p-divisibility-P-Q-remark} that
\begin{align*}
P_1 &\equiv 1 \pmod{2^3}, \\
Q_1 &\equiv 4(bd\gamma - b^2\bar{\beta}) - 5\bar{\beta} \equiv -5\bar{\beta} \pmod{2^3}.
\end{align*}
Thus we see that
\begin{equation*}
\dfrac{\partial F}{\partial x}(1, 0) \equiv 30 - 250\bar{\beta}^2 \equiv 4 \not\equiv 0 \pmod{2^3}.
\end{equation*}
Therefore the system $(\ref{locally-solvable-2-hensel})$ has a solution $(x, z) = (1, 0)$. By Hensel's lemma, $\cC$ is locally solvable at $2$, which proves our contention.

\end{proof}

\section{Infinitude of the sextuples $(p, b, d, \alpha, \beta, \gamma)$}
\label{infinitude-hyperelliptic-curve-hp-section}

By Corollary \ref{hyperelliptic-curves-hp-corollary} and Corollary \ref{n-even-hyperelliptic-curve-hp-corollary}, we know that in order to construct algebraic families of hyperelliptic curves satisfying HP1 and HP2, we need to find certain sextuples of rational functions in $\bQ(T)$ that parameterize sextuples $(p, b, d, \alpha, \beta, \gamma)$ satisfying $(A1)-(A5)$, $(S)$ and $(B1)$. In this section, we will show how to produce infinitely many sextuples $(p, b, d, \alpha, \beta, \gamma)$ satisfying $(A1)-(A5)$ and $(B1)$ from the known ones.

\begin{lemma}
\label{infinitude-alpha-beta-gamma-lemma}

Let $(p, b, d)$ be a triple of integers satisfying $(A1)$ and $(A2)$. Assume that there is a triple $(\alpha_0, \beta_0, \gamma_0) \in \bQ^3$ satisfying $(A3)$, $(A4)$, $(A5)$ and $(B1)$.
Let $(u_0, v_0, t_0) \in \bZ^3$ be a point on the conic $\cQ_1^{(\alpha_0, \beta_0, \gamma_0)}$ such that $u_0v_0t_0 \ne 0$ and $\gcd(u_0, v_0, t_0) = 1$, where the conic
$\cQ_1^{(\alpha_0, \beta_0, \gamma_0)}$ is defined by
\begin{equation*}
\cQ_1^{(\alpha_0, \beta_0, \gamma_0)} : pU^2 - V^2 - \beta_0P_0Q_0T^2 = 0
\end{equation*}
with
\begin{align*}
P_0 &= p\alpha_0^2 + 2\beta_0^2 - 2p\gamma_0^2, \\
Q_0 &= 4bdp\gamma_0 - 4b^2\beta_0 - d^2p\beta_0.
\end{align*}
Let $A, B \in \bQ$ be rational numbers, and assume that the following are true.
\begin{itemize}

\item [(C1)] $A, B \in \bZ_2$ and $B^2 - pA^2 \in \bZ_2^{\times}$.

\item [(C2)]  $A \in \bZ_p$ and $B \in \bZ_p^{\times}$.

\item [(C3)] $u := u_0 + AC \ne 0$ and $v := v_0 + BC \ne 0$, where
\begin{equation}
\label{eqn-C}
C := \dfrac{2pu_0A - 2v_0B - 4p^3\alpha_0\beta_0t_0^2Q_0}{B^2 - pA^2 + 4p^5\beta_0t_0^2Q_0}.
\end{equation}

\end{itemize}
Define
\begin{align*}
\begin{cases}
\alpha &:= \alpha_0 + 2p^2C \\
\beta  &:= \beta_0  \\
\gamma &:= \gamma_0.
\end{cases}
\end{align*}
Then the triple $(\alpha, \beta, \gamma) \in \bQ^3$ satisfies $(A3)$, $(A4)$, $(A5)$ and $(B1)$.

\end{lemma}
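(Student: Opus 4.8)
The plan is to exploit that only the first coordinate changes: $\beta=\beta_0$ and $\gamma=\gamma_0$, while $\alpha=\alpha_0+2p^2C$. Consequently $Q=4bdp\gamma-4b^2\beta-d^2p\beta$ equals $Q_0$ and is nonzero; applying Lemma~\ref{p-divisibility-beta-lemma} to $(\alpha_0,\beta_0,\gamma_0)$ (which satisfies (A3) and has $\alpha_0,\beta_0,\gamma_0\in\bZ_p$ by (A4)) shows the quantity $\bar\beta$ attached to the new triple is $\bar\beta_0$, so $Q_1$ is unchanged and (B1), being a congruence in $\bar\beta$ and $\gamma$ only, holds verbatim. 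Expanding $P=p\alpha^2+2\beta^2-2p\gamma^2$ one gets $P=P_0+4p^3\alpha_0 C+4p^5C^2$, so the new conic $\cQ_1^{(\alpha,\beta,\gamma)}$ differs from $\cQ_1^{(\alpha_0,\beta_0,\gamma_0)}$ only through these $C$-dependent terms.

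First I would check that $C\in\bZ_2\cap\bZ_p$. By (C1) the numerator of $(\ref{eqn-C})$ lies in $\bZ_2$ and the denominator is $\equiv B^2-pA^2\pmod 4$, a $2$-adic unit; by (C2) the numerator lies in $\bZ_p$ and the denominator is $\equiv B^2\pmod p$, a $p$-adic unit. In particular the denominator is nonzero, so $C$ is well defined, and $\alpha\equiv\alpha_0\pmod 2$, $\alpha\equiv\alpha_0\pmod{p^2}$. Since $\alpha_0\in\bZ_2^\times\cap\bZ_p^\times$ this gives $\alpha\in\bZ_2^\times\cap\bZ_p^\times$, hence (A4); and writing $P=pP_1$ with $P_1\equiv\alpha^2-2\gamma_0^2\pmod p$, the fact that $2$ is a nonresidue mod $p$ (as $p\equiv 5\pmod 8$) forces $P_1\in\bZ_p^\times$, so $P\neq 0$. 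Together with $Q=Q_0\neq 0$ this establishes the first two clauses of (A3).

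The crux is the conic clause of (A3), and this is the step I expect to carry all the weight: the definition $(\ref{eqn-C})$ of $C$ is designed precisely so that $(U,V,T)=(u_0+AC,\,v_0+BC,\,t_0)$ lies on $\cQ_1^{(\alpha,\beta,\gamma)}$. I would substitute this triple into $pU^2-V^2-\beta P Q\,T^2$, use $\beta Q=\beta_0Q_0$ and the expansion of $P$, and collect terms: the part independent of $C$ equals $pu_0^2-v_0^2-\beta_0P_0Q_0t_0^2$, which vanishes because $(u_0,v_0,t_0)\in\cQ_1^{(\alpha_0,\beta_0,\gamma_0)}$, and the remainder factors as $C\bigl[(2pu_0A-2v_0B-4p^3\alpha_0\beta_0t_0^2Q_0)+C(pA^2-B^2-4p^5\beta_0t_0^2Q_0)\bigr]$, whose bracket is zero exactly by $(\ref{eqn-C})$ (the denominator being nonzero). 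Hence $(u_0+AC,v_0+BC,t_0)$ is a $\bQ$-point of the new conic; it has all coordinates nonzero by (C3) and $t_0\neq 0$, and clearing denominators and dividing by the content produces an integral primitive point with nonzero coordinates, as required. This expansion — the one that motivated the formula $(\ref{eqn-C})$ in the first place — is the only nonroutine computation; the rest is local bookkeeping.

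Finally I would dispatch (A5): since $\alpha\equiv\alpha_0\pmod{p^2}$ we have $\alpha^2\equiv\alpha_0^2\pmod{p^2}$, so the quantity $P_1$ for the new triple is congruent mod $p^2$ to its value $P_1^{(0)}$ for $(\alpha_0,\beta_0,\gamma_0)$; as $Q_1=Q_1^{(0)}$, we get $\gamma Q_1+bdP_1\equiv\gamma_0 Q_1^{(0)}+bdP_1^{(0)}\equiv 0\pmod{p^2}$ from (A5) for the base triple. Combined with (B1), noted above, this shows $(\alpha,\beta,\gamma)$ satisfies (A3), (A4), (A5) and (B1), as claimed.
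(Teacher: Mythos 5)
Your proposal is correct and follows the same route as the paper's proof: check $C\in\bZ_2\cap\bZ_p$ using (C1)--(C2) and the unit denominator, deduce $\alpha\equiv\alpha_0$ modulo $2$ and modulo $p^2$ to get (A4) and later (A5), observe $Q=Q_0$ and $\bar\beta=\bar\beta_0$ so (B1) carries over, and verify that $(u_0+AC,\,v_0+BC,\,t_0)$ lies on the new conic by expanding $\beta P Q t^2$ in powers of $C$ and invoking the defining relation for $C$ to kill the $C$-dependent terms. The only cosmetic difference is that you show $P\ne 0$ via the $p$-adic unit $P_1\equiv\alpha^2-2\gamma_0^2\pmod p$ (using that $2$ is a nonresidue), whereas the paper instead notes $P\equiv p\alpha^2\not\equiv 0\pmod 2$; both are fine.
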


\begin{remark}
\label{Remark-The-geometric-motivation-for-parametrizing-the-triples-alpha-beta-gamma}

In order to use Theorem \ref{hyperelliptic-curves-theorem} to show the existence of algebraic families of hyperelliptic curves satisfying HP1 and HP2, one of the crucial steps is to describe a parametrization of triples $(\alpha, \beta, \gamma)$ such that the conics associated to these triples in $(A3)$ of Theorem \ref{hyperelliptic-curves-theorem} has a non-trivial rational point. Assuming the existence of one triple $(\alpha_0, \beta_0, \gamma_0)$ satisfying $(A3)-(A5)$ and $(B1)$, Lemma \ref{infinitude-alpha-beta-gamma-lemma} shows how to construct families of triples $(\alpha, \beta, \gamma)$ satisfying the same conditions as the triple $(\alpha_0, \beta_0, \gamma_0)$.

\end{remark}

\begin{proof}

We first prove that $(\alpha, \beta, \gamma)$ satisfies $(A4)$. Since $A \in \bZ_p$, $B \in \bZ_p^{\times}$ and the triple $(\alpha_0, \beta_0, \gamma_0)$ satisfies $(A4)$, it follows that
$B^2 - pA^2 + 4p^5\beta_0t_0^2Q_0 \in \bZ_p^{\times}$. Hence by $(\ref{eqn-C})$ and $(C2)$, we see that $C \in \bZ_p$. Thus $\alpha = \alpha_0 + 2p^2C \in \bZ_p$. Hence it follows that
\begin{equation*}
\alpha \equiv \alpha_0 \not\equiv 0 \pmod{p},
\end{equation*}
which proves that $\alpha \in \bZ_p^{\times}$. By assumption, one knows that the triple $(\alpha_0, \beta_0, \gamma_0)$ satisfies $(A4)$. Since $\beta = \beta_0$ and $\gamma = \gamma_0$, we deduce that $\beta, \gamma \in \bZ_2^{\times}$, $\beta \in \bZ_p$ and $\gamma, d \in \bZ_p^{\times}$. Hence it remains to prove that $\alpha \in \bZ_2^{\times}$. By assumptions and $(C1)$, we know that $Q_0 \in \bZ_2$ and $B^2 - pA^2 \in \bZ_2^{\times}$. Hence it follows that
\begin{align*}
B^2 - pA^2 + 4p^5\beta_0t_0^2Q_0 \equiv B^2 - pA^2 \not\equiv 0 \pmod{2}.
\end{align*}
Thus $B^2 - pA^2 + 4p^5\beta_0t_0^2Q_0 \in \bZ_2^{\times}$, and hence we deduce that $C \in \bZ_2$. Thus we see that
\begin{align*}
\alpha = \alpha_0 + 2p^2C \equiv \alpha_0 \not\equiv 0 \pmod{2}.
\end{align*}
Therefore $\alpha \in \bZ_2^{\times}$, and hence $(\alpha, \beta, \gamma)$ satisfies $(A4)$.

Now we prove that $(\alpha, \beta, \gamma)$ satisfies $(A3)$. By what we have proved above, we know that $\alpha, \beta, \gamma \in \bZ_2^{\times}$. This implies that $\alpha, \beta, \gamma \ne 0$. Let $P$ and $Q$ be the rational numbers defined by $(\ref{eqn-P})$ and $(\ref{eqn-Q})$, respectively. One knows that $Q = Q_0 \ne 0$. Since $\alpha, \beta, \gamma \in \bZ_2^{\times}$, it follows that $P \in \bZ_2$. Hence we deduce that
\begin{equation*}
P \equiv p\alpha^2 \not\equiv 0 \pmod{2},
\end{equation*}
which proves that $P \in \bZ_2^{\times}$. Note that $P \ne 0$ since $P \in \bZ_2^{\times}$.

Let $\cQ_1 \subset \bP^2_{\bQ}$ be the conic defined by
\begin{equation*}
\cQ_1 :pU^2 - V^2 - \beta PQ T^2= 0.
\end{equation*}
We prove that the point $\P := (u, v, t) \in \bQ^3$ belongs to $\cQ_1(\bQ)$, where $u$ and $v$ are defined in $(C3)$ and $t := t_0$. Indeed, since $\beta = \beta_0$, $\gamma = \gamma_0$ and $Q = Q_0$, we deduce from $(\ref{eqn-P})$ that
\begin{align*}
-\beta PQt^2 = -\beta_0t_0^2Q_0(p(\alpha_0 + 2p^2C)^2 + 2\beta_0^2 - 2p\gamma_0^2) = -(4p^5\beta_0t_0^2Q_0)C^2 - (4p^3\alpha_0\beta_0t_0^2Q_0)C - (\beta_0P_0Q_0)t_0^2.
\end{align*}
Hence
\begin{align*}
pu^2 - v^2 -\beta PQt^2 &= p(u_0 + AC)^2 - (v_0 + BC)^2 -(4p^5\beta_0t_0^2Q_0)C^2 - (4p^3\alpha_0\beta_0t_0^2Q_0)C - (\beta_0P_0Q_0)t_0^2 \\
&= \left(pA^2 - B^2 - 4p^5\beta_0t_0^2Q_0 \right)C^2 + \left(2pu_0A - 2v_0B - 4p^3\alpha_0\beta_0t_0^2Q_0\right)C + (pu_0^2 - v_0^2 - \beta_0P_0Q_0t_0^2).
\end{align*}
Since $(u_0, v_0, t_0)$ belongs to $\cQ_1^{(\alpha_0, \beta_0, \gamma_0)}(\bQ)$, we see that
\begin{equation*}
pu_0^2 - v_0^2 - \beta_0P_0Q_0t_0^2 = 0.
\end{equation*}
Hence it follows from from $(\ref{eqn-C})$ that
\begin{align*}
pu^2 - v^2 -\beta PQt^2 = \left(pA^2 - B^2 - 4p^5\beta_0t_0^2Q_0 \right)C^2 + \left(2pu_0A - 2v_0B - 4p^3\alpha_0\beta_0t_0^2Q_0\right)C = 0.
\end{align*}
Thus $\P \in \cQ_1(\bQ)$. Since $\cQ_1$ is a nonsingular conic in $\bP^2_{\bQ}$, $\cQ_1(\bQ) \ne \emptyset$ and $uvt \ne 0$, it follows that $(\alpha, \beta, \gamma)$ satisfies $(A3)$.

We now prove that $(\alpha, \beta, \gamma)$ satisfies $(A5)$. Indeed, we have shown that $(\alpha, \beta, \gamma)$ satisfies $(A3), (A4)$. This implies that $\alpha, \beta, \gamma \in \bZ_p$. By Lemma \ref{p-divisibility-beta-lemma}, we know that there is a rational number $\bar{\beta} \in \bQ$ such that $\beta = p\bar{\beta}$ and $\bar{\beta} \in \bZ_p$. Similarly, since $(\alpha_0, \beta_0, \gamma_0)$ satisfies $(A3)$ and $(A4)$, there is a rational number $\bar{\beta}_0$ such that $\beta_0 = p\bar{\beta}_0$ and $\bar{\beta}_0 \in \bZ_p$. Since $\beta = \beta_0$, we deduce that $\bar{\beta} = \bar{\beta}_0$.

Let $P_1$ and $Q_1$ be the rational numbers defined in Remark \ref{p-divisibility-P-Q-remark} and
let $P_1^{(0)}$ and $Q_1^{(0)}$ be the rational numbers defined by the same equations as $P_1$, $Q_1$, respectively
in Remark \ref{p-divisibility-P-Q-remark} with $(\alpha_0, \bar{\beta_0}, \gamma_0)$ in the role of $(\alpha, \bar{\beta}, \gamma)$. By assumption, one knows that the triple $(\alpha_0, \beta_0, \gamma_0)$ satisfies $(A5)$, that is,
\begin{equation*}
\gamma_0 Q_1^{(0)} + bdP_1^{(0)} \equiv 0 \pmod{p^2}.
\end{equation*}
We will prove that
\begin{equation*}
\gamma Q_1 + bdP_1 \equiv 0 \pmod{p^2}.
\end{equation*}
Indeed, one can check that
\begin{align*}
P_1 = \alpha^2 + 2p\bar{\beta}^2 - 2\gamma^2 = 4p^4C^2 + 4p^2\alpha_0C + P_1^{(0)}
\end{align*}
and $Q_1 = Q_1^{(0)}$. Since $\alpha, \bar{\beta}, \gamma$ are in $\bZ_p$, we deduce that $P_1 \in \bZ_p$. Recall that $C \in \bZ_p$. Hence
\begin{align*}
P_1 = 4p^4C^2 + 4p^2\alpha_0C + P_1^{(0)} \equiv P_1^{(0)} \pmod{p^2},
\end{align*}
and thus we deduce that
\begin{equation*}
\gamma Q_1 + bd P_1 \equiv \gamma_0 Q_1^{(0)} + bdP_1^{(0)} \equiv 0 \pmod{p^2}.
\end{equation*}
Therefore $(\alpha, \beta, \gamma)$ satisfies $(A5)$.

Finally, since $(\alpha_0, \beta_0, \gamma_0)$ satisfies $(B1)$, we see that
\begin{align*}
bd - \bar{\beta}\gamma = bd -\bar{\beta}_0\gamma_0 \equiv 0 \pmod{4}.
\end{align*}
Thus $(\alpha, \beta, \gamma)$ satisfies $(B1)$, which proves our contention.

\end{proof}

\begin{lemma}
\label{infinitude-(A,B)-lemma}

Let $(p, b, d)$ be a triple of integers satisfying $(A1)$ and $(A2)$. Assume that there is a triple $(\alpha_0, \beta_0, \gamma_0) \in \bQ^3$ satisfying $(A3)$, $(A4)$, $(A5)$ and $(B1)$.
Let $(u_0, v_0, t_0) \in \bZ^3$ be a point on the conic $\cQ_1^{(\alpha_0, \beta_0, \gamma_0)}$ such that $u_0v_0t_0 \ne 0$ and $\gcd(u_0, v_0, t_0) = 1$, where $P_0$, $Q_0$ and the conic
$\cQ_1^{(\alpha_0, \beta_0, \gamma_0)}$ are defined as in Lemma \ref{infinitude-alpha-beta-gamma-lemma}. Let $\I$ be the set defined by
\begin{align*}
\I := \left\{(A, B) \in \bQ^2: (A, B) \; \; \text{satisfies} \; \; (C1), (C2), (C3) \; \; \text{in Lemma \ref{infinitude-alpha-beta-gamma-lemma}} \right\}.
\end{align*}
Then $\I$ is of infinite cardinality.

\end{lemma}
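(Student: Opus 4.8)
The plan is to produce an explicit infinite family inside $\I$ by specializing $A = 0$ and letting $B$ range over suitable integers. With $A = 0$, condition (C1) reduces to ``$B \in \bZ_2$ and $B^2 \in \bZ_2^{\times}$'', i.e.\ $B$ odd, while condition (C2) reduces to ``$B \in \bZ_p^{\times}$'', i.e.\ $p \nmid B$. Thus every integer $B$ that is odd and prime to $p$ satisfies (C1) and (C2), and there are infinitely many such $B$.

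It then remains to arrange (C3) for these choices. Since $(\alpha_0, \beta_0, \gamma_0)$ satisfies (A4) one has $\beta_0 \in \bZ_2^{\times}$, and the proof of Lemma \ref{infinitude-alpha-beta-gamma-lemma} shows $Q_0 \in \bZ_2$; as $t_0 \in \bZ$ and $p$ is odd, $4p^5\beta_0 t_0^2 Q_0 \in 4\bZ_2$. Hence for $B$ odd the denominator in $(\ref{eqn-C})$ becomes $B^2 + 4p^5\beta_0 t_0^2 Q_0 \equiv B^2 \pmod{4\bZ_2}$, so it is a $2$-adic unit, in particular nonzero, and $C$ is well defined. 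Moreover $u = u_0 + 0\cdot C = u_0 \ne 0$ by hypothesis, so the first half of (C3) is automatic. For the second half, write $C = N(B)/D(B)$ with $N(B) := -2v_0 B - 4p^3\alpha_0\beta_0 t_0^2 Q_0$ and $D(B) := B^2 + 4p^5\beta_0 t_0^2 Q_0$; clearing denominators gives $v\,D(B) = v_0 D(B) + B\,N(B) = -v_0 B^2 - 4p^3\alpha_0\beta_0 t_0^2 Q_0\,B + 4p^5 v_0\beta_0 t_0^2 Q_0$, a polynomial in $B$ of degree exactly $2$ because $v_0 \ne 0$. Since $D(B) \ne 0$ for odd $B$, the equality $v = 0$ forces $B$ to be one of the at most two rational roots of this quadratic.

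Combining the three conditions: the set of integers $B$ that are odd, prime to $p$, and not among the finitely many roots of the quadratic above is infinite, and for each such $B$ the pair $(0, B)$ lies in $\I$. Distinct $B$ give distinct pairs, so $\I$ is of infinite cardinality. The only step requiring any care — the mild obstacle here — is excluding $v = 0$, which reduces to the observation that $v_0 \ne 0$ keeps the relevant polynomial in $B$ genuinely quadratic and hence with only finitely many roots; everything else is immediate from the $2$-adic and $p$-adic integrality facts already established for the triple $(\alpha_0, \beta_0, \gamma_0)$ in the proof of Lemma \ref{infinitude-alpha-beta-gamma-lemma}.
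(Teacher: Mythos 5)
Your argument is correct and mirrors the paper's own proof: specialize to $A = 0$, take $B$ an integer coprime to $2p$ (the paper parametrizes these as $B = 2px + B_0$ with $\gcd(B_0, 2p) = 1$), observe that $u = u_0 \neq 0$ automatically and that $v = 0$ forces $B$ to satisfy a fixed quadratic with leading coefficient $v_0 \neq 0$, leaving infinitely many valid choices. Your explicit verification that the denominator $B^2 + 4p^5\beta_0 t_0^2 Q_0$ is a $2$-adic unit for odd $B$ is a small but welcome addition the paper leaves implicit.
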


\begin{proof}

Let $B_0$ be an integer such that $\gcd(B_0, 2p) = 1$. For each $x \in \bZ$, define $B = 2px + B_0$. We see that $B \in \bZ_2^{\times}$ and $B \in \bZ_p^{\times}$. The latter implies that $B \ne 0$. Let $A = 0$, and let $C$ be the rational number defined by $(\ref{eqn-C})$. Define
\begin{align*}
u &:= u_0 + AC = u_0, \\
v &:= v_0 + BC.
\end{align*}
By assumption, we know that $u = u_0 \ne 0$. Assume that $v = 0$. Since $B \ne 0$, it follows from $(\ref{eqn-C})$ and the definition of $v$ that
\begin{equation*}
C = -\dfrac{v_0}{B} = \dfrac{- 2v_0B - 4p^3\alpha_0\beta_0t_0^2Q_0}{B^2 + 4p^5\beta_0t_0^2Q_0}.
\end{equation*}
Hence we deduce that $B$ is a zero of the quadratic polynomial $\cB(T) \in \bQ[T]$, where $\cB(T)$ is defined by
\begin{equation}
\label{B(T)-polynomial}
\cB(T) := v_0T^2 + (4p^3\alpha_0\beta_0t_0^2Q_0)T - 4p^5\beta_0v_0t_0^2Q_0.
\end{equation}
Hence upon letting $T_1$ and $T_2$ be the zeros of $\cB(T)$, we deduce that $(0, B)$ satisfies $(C3)$ in Lemma \ref{infinitude-alpha-beta-gamma-lemma} if and only if $B \ne T_1$ and $B \ne T_2$. The latter holds if and only if $x \ne \dfrac{T_1  - B_0}{2p}$ and $x \ne \dfrac{T_2  - B_0}{2p}$. This implies that if $T_1, T_2 \not\in \bZ$, then $(0, B)$ automatically satisfies $(C3)$ for any integer $x \in \bZ$. Furthermore we see that $B^2 - pA^2 = B^2 \in \bZ_2^{\times}$. Hence $(0, B)$ satisfies $(C1)$ and $(C2)$. Thus $\J$ is a subset of $\I$, where $\J$ is defined by
\begin{align*}
\J := \left\{(0, B) : x \in \bZ, \;  \; x \ne \dfrac{T_1  - B_0}{2p} \; \; \text{and} \; \; x \ne \dfrac{T_2  - B_0}{2p} \right\}.
\end{align*}
Since $\J$ is of infinite cardinality, so is $\I$. Hence our contention follows.

\end{proof}

Using Lemma \ref{infinitude-alpha-beta-gamma-lemma} and Lemma \ref{infinitude-(A,B)-lemma}, we prove the main result in this section.

\begin{lemma}
\label{Lemma-the-infinitude-of-p-b-d-alpha-beta-gamma-satisfying-A1-A5-and-B1}

There are infinitely many sextuples $(p, b, d, \alpha, \beta, \gamma)$ satisfying $(A1)-(A5)$ and $(B1)$.

\end{lemma}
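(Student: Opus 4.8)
The plan is to bootstrap from a single explicit sextuple by iterating the two preceding lemmas. Suppose one sextuple $(p, b, d, \alpha_0, \beta_0, \gamma_0)$ has been produced for which $(p,b,d)$ satisfies $(A1)$, $(A2)$ and $(\alpha_0, \beta_0, \gamma_0)$ satisfies $(A3)$, $(A4)$, $(A5)$, $(B1)$; fix a primitive integral point $(u_0, v_0, t_0)$, with $u_0 v_0 t_0 \neq 0$, on the conic $\cQ_1^{(\alpha_0,\beta_0,\gamma_0)}$ as in Lemma \ref{infinitude-alpha-beta-gamma-lemma}. By Lemma \ref{infinitude-(A,B)-lemma} the set $\I$ of couples $(A,B)$ satisfying $(C1)$, $(C2)$, $(C3)$ is infinite, and its proof in fact supplies infinitely many such couples with $A = 0$. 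For each of these, Lemma \ref{infinitude-alpha-beta-gamma-lemma} produces a new triple $(\alpha, \beta, \gamma) = (\alpha_0 + 2p^2 C,\ \beta_0,\ \gamma_0)$ satisfying $(A3)-(A5)$ and $(B1)$, with $C$ as in $(\ref{eqn-C})$. Since $\beta$ and $\gamma$ are held fixed, two of the resulting sextuples agree only when the corresponding values of $\alpha$, hence of $C$, agree; but when $A = 0$ the formula $(\ref{eqn-C})$ expresses $C$ as a rational function of $B$ whose numerator is linear and nonconstant in $B$ (here $v_0 \neq 0$ is used) and whose denominator is quadratic, so $C$ is a nonconstant rational function of $B$ and therefore takes infinitely many distinct values on the infinite set of admissible $B$. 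This gives infinitely many pairwise distinct sextuples satisfying $(A1)-(A5)$ and $(B1)$, so it suffices to exhibit one base sextuple.

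To produce the base sextuple I would fix the triple $(p, b, d) = (5, 1, 1)$, which satisfies $(A1)$ and $(A2)$ by the Example following Lemma \ref{threefolds-infinitude-triple-lemma}. By Lemma \ref{p-divisibility-beta-lemma}, any $(\alpha_0, \beta_0, \gamma_0)$ satisfying $(A3)$ with $\beta_0 \in \bZ_p$ must have $p \mid \beta_0$, so I would set $\beta_0 = p\bar\beta_0$ and treat $\alpha_0, \bar\beta_0, \gamma_0$ as the free parameters, each taken to be a unit at $2$ and at $p$; since $d = 1$, condition $(A4)$ is then automatic. Condition $(B1)$ reduces to the single congruence $bd - \bar\beta_0\gamma_0 \equiv 0 \pmod 4$, arranged by choosing $\bar\beta_0\gamma_0 \equiv 1 \pmod 4$. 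Condition $(A5)$ reduces to the single congruence $\gamma_0 Q_1^{(0)} + bd\,P_1^{(0)} \equiv 0 \pmod{p^2}$, where $P_1^{(0)} = \alpha_0^2 + 2p\bar\beta_0^2 - 2\gamma_0^2$ and $Q_1^{(0)} = 4bd\gamma_0 - 4b^2\bar\beta_0 - d^2 p\bar\beta_0$; this can be met by first adjusting $\alpha_0^2$ modulo $p$ and then lifting by Hensel's lemma, keeping $\alpha_0$ a unit. The nonvanishing conditions $P_0 \neq 0$ and $Q_0 \neq 0$ in $(A3)$ then hold for all but finitely many choices.

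The one genuinely delicate requirement in $(A3)$ is that the conic $\cQ_1 : pU^2 - V^2 - (\beta_0 P_0 Q_0)T^2 = 0$ possess a primitive integral point with no vanishing coordinate. By the Hasse--Minkowski theorem this is equivalent to the local condition $(p, \beta_0 P_0 Q_0)_\ell = 1$ at every place $\ell$. Since $p \equiv 5 \pmod 8$, this holds automatically at $\ell = \infty$ and at every odd prime $\ell \neq p$ at which $p$ is a square; at $\ell = 2$, at $\ell = p$, and at the odd primes dividing $\beta_0 P_0 Q_0$ at which $p$ is a nonsquare it becomes an explicit valuation-and-residue condition on $\beta_0 P_0 Q_0$. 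I expect this to be the main obstacle: one must arrange $\beta_0 P_0 Q_0$ to be a local norm from $\bQ(\sqrt p)$ at $2$ and at $p$ --- forceable by congruence conditions together with Hensel's lemma, as in Corollaries \ref{hyperelliptic-curves-hp-corollary} and \ref{n-even-hyperelliptic-curve-hp-corollary} --- while also controlling the odd prime factors of $\beta_0 P_0 Q_0$. The cleanest route is to pin down a single concrete numerical triple $(\alpha_0, \bar\beta_0, \gamma_0)$ and verify isotropy of the ternary form $\langle p, -1, -\beta_0 P_0 Q_0 \rangle$ over every $\bQ_\ell$ directly; failing that, one can invoke Dirichlet's theorem on primes in arithmetic progressions to force the remaining bad odd prime factors of $\beta_0 P_0 Q_0$ to split in $\bQ(\sqrt p)$. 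Once a base sextuple is in hand, the argument of the first paragraph finishes the proof.
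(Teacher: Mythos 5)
Your first paragraph matches the paper's strategy (bootstrap from a single base sextuple via Lemmas \ref{infinitude-alpha-beta-gamma-lemma} and \ref{infinitude-(A,B)-lemma}), and your finishing argument is in fact cleaner than the paper's: where the paper inductively builds a sequence $(0,B_n)$ by avoiding the roots of auxiliary linear polynomials $\H_i(T)$ and then verifies $\alpha_m \neq \alpha_n$ by a resultant-type computation, you simply observe that with $A=0$ the map $B \mapsto C(B)$ in $(\ref{eqn-C})$ is a nonconstant rational function (numerator linear with leading coefficient $-2v_0 \neq 0$, denominator quadratic), so its fibers are finite and it assumes infinitely many values on the infinite admissible set. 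That is a valid and shorter route to the same conclusion, and it correctly isolates where $v_0 \neq 0$ is used.

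The gap is in your second and third paragraphs: you never actually produce a base sextuple, only a strategy for finding one. You propose working over $(p,b,d)=(5,1,1)$, reducing $(A3)$'s conic condition to $(p, \beta_0 P_0 Q_0)_\ell = 1$ at all places via Hasse--Minkowski (correct), and then say one should ``pin down a single concrete numerical triple $(\alpha_0,\bar\beta_0,\gamma_0)$ and verify isotropy... directly; failing that, one can invoke Dirichlet''. None of these steps is carried out, and whether a suitable triple exists over $(5,1,1)$ is not obvious; the paper in fact works over $(29,1,3)$ and exhibits the concrete sextuple $(p,b,d,\alpha_0,\beta_0,\gamma_0)=(29,1,3,7,261,15)$, whose verification it carries out in the proof of Theorem \ref{algebraic-family-hyperelliptic-curve-hp-theorem} (the conic $\cQ_1^{(7,261,15)}: 29U^2 - V^2 + 2079746732385\,T^2 = 0$ admits the point $(166257, 3020031, 2)$, and $(A4)$, $(A5)$, $(B1)$ are checked directly). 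Since the whole lemma is an ``exhibit at least one, then propagate'' argument, the base case is not a dispensable detail: without a verified sextuple, the bootstrap produces nothing. To complete your proposal you would need to either exhibit and check a concrete sextuple (over $(5,1,1)$ or otherwise) or, at minimum, flesh out the Hensel/Dirichlet argument into an actual existence proof, including the part where one ensures the conic has a rational point with $uvt \neq 0$ (this last part is easy once the conic is isotropic and nondegenerate, but it should still be said).
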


\begin{proof}

Assume that there is a sextuple $(p, b, d, \alpha_0, \beta_0, \gamma_0)$ satisfying $(A1)-(A5)$ and $(B1)$. Let $(u_0, v_0, t_0) \in \bZ^3$ be a point on the conic $\cQ_1^{(\alpha_0, \beta_0, \gamma_0)}$ such that $u_0v_0t_0 \ne 0$ and $\gcd(u_0, v_0, t_0) = 1$, where $P_0$, $Q_0$ and the conic $\cQ_1^{(\alpha_0, \beta_0, \gamma_0)}$ are defined as in Lemma \ref{infinitude-alpha-beta-gamma-lemma}. Let $\J$ be the set defined in the proof of Lemma \ref{infinitude-(A,B)-lemma}. We construct an infinite sequence $(0, B_n)_{n \in \bZ_{\ge 0}}$ of elements of $\J$ as follows.

Let $(0, B_1)$ be an arbitrary element of $\J$, and assume that the elements $(0, B_i)$ of $\J$ with $1 \le i \le n$ are already constructed.
Since $\J$ is infinite, we can choose an element $(0, B_{n + 1})$ of $\J$ such that $B_{n + 1} \ne B_i$ for $1 \le i \le n$ and  $B_{n + 1}$
is not a zero of any of the polynomials $\H_i(T)$ for $1 \le i \le n$, where for each $1 \le i \le n$, $\H_i(T)$ is defined by
\begin{align}
\label{Hi-polynomial}
\H_i(T) = (v_0B_i + 2p^3\alpha_0\beta_0t_0^2Q_0)T + 2p^3\alpha_0\beta_0t_0^2Q_0B_i - 4p^5\beta_0v_0t_0^2Q_0 \in \bQ[T].
\end{align}
Indeed, we see that $2p^3\alpha_0\beta_0t_0^2Q_0B_i - 4p^5\beta_0v_0t_0^2Q_0 \ne 0$ for every $1 \le i \le n$; otherwise, there is an integer $1 \le i \le n$ such that
\begin{equation*}
\alpha_0B_i = 2p^2v_0.
\end{equation*}
Hence $\alpha_0B_i \not\in \bZ_p^{\times}$, which is a contradiction since $\alpha_0$ and $B_i$ are in $\bZ_p^{\times}$. Hence $\H_i(T)$ is nonzero and of degree at most $1$ for each $1 \le i \le n$. Thus $\H_i(T)$ has at most one zero in $\bZ$ for each $1 \le i \le n$; hence, excluding these $n$ zeros (if existing) and the integers $B_i$ for $1 \le i \le n$ out of the infinite set $\J$, one can choose an element $(0, B_{n+1})$ as desired. Therefore we have inductively constructed an infinite sequence $\left\{(0, B_{n})\right\}_{n \ge 1}$ of elements of $\J$. We contend that for any two distinct members $(0, B_m)$ and $(0, B_n)$ of the sequence with $m < n$, the triples $(\alpha_m, \beta_0, \gamma_0)$ and $(\alpha_n, \beta_0, \gamma_0)$ are distinct, that is, $\alpha_m \ne \alpha_n$, where
\begin{align*}
\alpha_m &:= \alpha_0 + 2p^2C_{(m)}, \\
\alpha_n &:= \alpha_0 + 2p^2C_{(n)},
\end{align*}
and $C_{(m)}$, $C_{(n)}$ are defined as in $(\ref{eqn-C})$ with $(0, B_m)$ and $(0, B_n)$ in the role of $(A, B)$, respectively. Assume the contrary, that is, $\alpha_m = \alpha_n$. It follows that
\begin{equation*}
\dfrac{- 2v_0B_m - 4p^3\alpha_0\beta_0t_0^2Q_0}{B_m^2 + 4p^5\beta_0t_0^2Q_0} = C_{(m)} = C_{(n)} = \dfrac{- 2v_0B_n - 4p^3\alpha_0\beta_0t_0^2Q_0}{B_n^2 + 4p^5\beta_0t_0^2Q_0}.
\end{equation*}
Hence we deduce that
\begin{equation*}
2(B_n - B_m)\left(\left(v_0B_m + 2p^3\alpha_0\beta_0t_0^2Q_0\right)B_n + 2p^3\alpha_0\beta_0t_0^2Q_0B_m - 4p^5\beta_0v_0t_0^2Q_0\right) = 0.
\end{equation*}
Since $B_n \ne B_m$, we deduce that $B_n$ is a zero of $\H_m(T)$, where $\H_m(T)$ is defined by $(\ref{Hi-polynomial})$, which is a contradiction to the choice of $B_n$. Thus we have shown that there are infinitely many sextuples $(p, b, d, \alpha, \beta, \gamma)$ satisfying $(A1)-(A5)$ and $(B1)$ provided that there exists one sextuple $(p, b, d, \alpha_0, \beta_0, \gamma_0)$ satisfying $(A1)-(A5)$ and $(B1)$. On the other hand, in the proof of part $(i)$ of Theorem \ref{algebraic-family-hyperelliptic-curve-hp-theorem} below, we will show that the sextuple
$(p, b, d, \alpha_0, \beta_0, \gamma_0) = (29, 1, 3, 7, 261, 15)$ satisfies $(A1)-(A5)$ and $(B1)$, and hence our contention follows.

\end{proof}

\section{Algebraic families of hyperelliptic curves violating the Hasse principle}
\label{algebraic-families-hyperelliptic-curve-hp-section}

Let $n$ be an integer such that $n > 5$ and $n \not\equiv 0 \pmod{4}$. In this section, using the results in the last section, we will show how to construct algebraic families of hyperelliptic curves of genus $n$ satisfying HP1 and HP2. We begin by proving the following useful lemma.

\begin{lemma}
\label{algebraic-family-hyperelliptic-curve-lemma}

Let $\S$ be a finite set of primes, and let $\G(t) \in \bQ(t)$ be a nonzero rational function. Let $\Z$ be the finite set of rational zeros and poles of $\G(t)$, that is, $\Z$ consists of the rational numbers $z \in \bQ$ for which $\G(z)$ is either zero or infinity. For any $z \in \Z$, let $a_z, b_z$ be integers such that $b_z \ne 0$, $\gcd(a_z, b_z) = 1$ and $z = a_z/b_z$. Assume that the following is true.
\begin{itemize}

\item [(D)] let $z$ be any element in $\Z$ such that $a_z \ne 0$. Then $a_z \not\equiv 0 \pmod{l}$ for each prime $l \in \S$.

\end{itemize}
Then there is a rational function $\F(t) \in \bQ(t)$ such that the following are true.
\begin{itemize}

\item [(1)] $\F(t) \in \bZ_l^{\times}$ for each prime $l \in \S$ and each $t \in \bQ$, and

\item [(2)] $\G(\F(t))$ is defined (that is, not infinity) and nonzero for each $t \in \bQ$.

\end{itemize}

\end{lemma}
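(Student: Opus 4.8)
The plan is to produce $\F$ as a ratio of two binary quadratic forms that are \emph{anisotropic at every prime of $\S$}. Put $M:=\prod_{l\in\S}l$; when $\S=\emptyset$ condition (1) is vacuous and the assertion is elementary, so assume $\S\ne\emptyset$. I would look for
\[
\F(t)=\frac{g(t,1)}{h(t,1)},\qquad g,h\in\bZ[u,v]\ \text{binary quadratic forms},
\]
such that for every $l\in\S$ the reductions of $g$ and $h$ modulo $l$ have no nontrivial zero over $\bF_l$; equivalently, $g(u,v),h(u,v)\in\bZ_l^{\times}$ whenever $\gcd(u,v)=1$. Such forms are plentiful: for each $l\in\S$ pick an anisotropic binary quadratic form over $\bF_l$ (say $u^2-\nu v^2$ with $\nu$ a non-residue when $l$ is odd, and $u^2+uv+v^2$ when $l=2$) and lift by the Chinese Remainder Theorem coefficient by coefficient, with the coefficients moreover free to be prescribed in any chosen residue classes modulo $M$. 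Granting this, (1) is immediate: for $t=u/v$ with $\gcd(u,v)=1$ and $v\ne 0$, homogeneity gives $\F(t)=g(u,v)/h(u,v)$, each factor is an $l$-adic unit for every $l\in\S$, and in particular $h(u,v)\ne 0$, so $\F$ has no pole on $\bQ$ and is everywhere defined.

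It then remains to choose $g,h$ so that (2) holds, i.e.\ $\F(t)\notin\Z$ for all $t\in\bQ$. If $z\in\Z$ has $a_z=0$ then $z=0$ and $\F(t)=0$ is impossible since $g(u,v)\ne 0$. If $z\in\Z$ has $a_z\ne 0$, write $z=a_z/b_z$; then $\F(u/v)=z$ is equivalent to the vanishing at $(u,v)$ of the binary quadratic form $\Phi_z:=b_z g-a_z h$, and since a form $Au^2+Buv+Cv^2$ with $A\ne 0$ has a nontrivial rational zero exactly when $B^2-4AC$ is a perfect square, it suffices to arrange, writing $\alpha,\delta$ for the leading coefficients of $g,h$: (i) $b_z\alpha\ne a_z\delta$, and (ii) $\operatorname{disc}(\Phi_z)$ is not a perfect square, for each of the finitely many $z\in\Z$ with $a_z\ne 0$. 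Condition (i) rules out, for fixed $\delta$, only finitely many values of $\alpha$, so it is compatible with keeping $g,h$ anisotropic at every $l\in\S$; fix such a $g$ and such a form $h_0$. For $s\in\bZ$ put $h_s:=h_0+sMv^2$: since $M\equiv 0\pmod l$ for $l\in\S$, each $h_s$ is still anisotropic at every $l\in\S$ and has leading coefficient $\delta$, and $\operatorname{disc}(b_z g-a_z h_s)$ is a non-constant affine function of $s$, with slope $4a_zM(b_z\alpha-a_z\delta)\ne 0$ by (i). Hence the set $B_z:=\{s\in\bZ:\operatorname{disc}(b_z g-a_z h_s)\ \text{is a perfect square}\}$ meets $[-X,X]$ in only $O(\sqrt X)$ integers (distinct $s$ give distinct values of an affine function, and an interval of length $O(X)$ contains $O(\sqrt X)$ squares), so $\bigcup_z B_z$ omits all but a density-zero set of integers $s$. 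Choosing any $s$ outside $\bigcup_z B_z$ (and outside the at most one value making $g/h_s$ constant) and setting $h:=h_s$, $\F:=g(t,1)/h(t,1)$ completes the construction: each $\Phi_z$ with $a_z\ne 0$ is anisotropic over $\bQ$, so $\F(t)\notin\Z$ for all $t\in\bQ$.

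The one real obstacle is exactly this descent step for (2). Note $\F$ cannot be taken of degree $1$, since a M\"obius transformation is a bijection of $\bP^1(\bQ)$ and so attains every value; $\F$ must have degree $\ge 2$, and the crux is to force its degree-$2$ fibres over the finitely many points of $\Z\cup\{\infty\}$ to carry no rational point, which is the discriminant/counting argument above. Hypothesis (D) is what makes the bookkeeping uniform: for $z\in\Z$ with $a_z\ne 0$ but $l\mid a_z$ for some $l\in\S$ one has $v_l(z)>0$, so (1) alone already gives $\F(t)\ne z$; (D) guarantees there is no such $z$, so every $z\in\Z\setminus\{0\}$ is handled by the forms $\Phi_z$ (and for those $z$ with $b_z\equiv 0\pmod l$ for some $l\in\S$ one may alternatively observe $\Phi_z\equiv -a_z h\pmod l$ is anisotropic at $l$, bypassing the counting for that $z$). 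The final exclusion of one value of $s$ also secures non-constancy of $\F$, which the intended application requires even though the statement does not.
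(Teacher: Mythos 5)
Your proof is correct and takes a genuinely different route from the paper's. Both constructions write $\F$ as a ratio of quadratics, but the paper produces $\F$ as an explicit closed formula, $\F(t)=K_1\bigl(1+K_2/(t^2-p_0^2\epsilon)\bigr)$, where $\epsilon$ is chosen by CRT to be $\equiv 2 \pmod 4$ and a quadratic non-residue mod each odd $l\in\S$, $p_0$ is an auxiliary prime coprime to the relevant data, and $K_1,K_2$ are specific integer products built from $p_0$, the $a_z,b_z$, and $\prod_{l\in\S}l$; condition (1) is then verified by $l$-adic estimates that use hypothesis (D) precisely to make $K_1$ an $l$-adic unit, and (2) by a congruence contradiction modulo $p_0$. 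You instead take $\F=g(t,1)/h(t,1)$ with $g,h$ binary quadratic forms anisotropic modulo every $l\in\S$, so that (1) is immediate from anisotropy, and you force (2) by a counting argument on the discriminants of $\Phi_z=b_z g-a_z h$ as the $v^2$-coefficient of $h$ is shifted in steps of $M=\prod_{l\in\S}l$. The paper's version buys explicitness, which Theorem \ref{algebraic-family-hyperelliptic-curve-hp-theorem} needs (it plugs the resulting formula for $\F$ directly into the explicit families), at the cost of a separate and slightly delicate $2$-adic case split; your version is cleaner and more uniform at $l=2$ and, notably, never invokes hypothesis (D) at all, so it proves a mildly stronger statement, but it is nonconstructive as written (one still has to locate a suitable shift $s$). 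One small wording slip: you write ``$\bigcup_z B_z$ omits all but a density-zero set of integers'' when you mean that $\bigcup_z B_z$ itself has density zero, so that its complement is nonempty, which is what the rest of the sentence actually uses.
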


\begin{proof}

We consider the following two cases.

$\star$ \textit{Case 1. $\Z$ is non-empty.}

By the Chinese Remainder Theorem, there exists an integer $\epsilon$ such that $\epsilon \equiv 2 \pmod{4}$ and $\epsilon$ is a quadratic non-residue in $\bF_l^{\times}$ for each odd prime $l \in \S$ with $l \ne 2$. Let $p_0$ be an odd prime such that the following are true.

\begin{itemize}

\item [(i)] $p_0 \not\in \S$;

\item [(ii)] $b_z \not\equiv 0 \pmod{p_0}$ for every $z \in \Z$; and

\item [(iii)] let $z$ be any element in $\Z$ such that $a_z \ne 0$. Then $a_z \not\equiv 0 \pmod{p_0}$.

\end{itemize}
For each $z \in \Z$, we define
\begin{equation}
\label{algebraic-families-Di-equation}
D_z := p_0b_z\text{sign}(a_z) \prod_{\substack{w \in \Z\setminus \{z\}}}\max(1, |a_w|) \in \bZ,
\end{equation}
where $\text{sign}(\cdot)$ denotes the usual sign function of $\bR$, that is, $\text{sign}(x) = 1$ if $x \ge 0$, and $\text{sign}(x) = -1$ if $x < 0$. We see that $|D_z| \ge p_0 \ge 3$ for each $z \in \Z$. This implies that $|D_z - 1| \ge 1$ for every $z \in \Z$. We will prove that the rational function $\F(t) \in \bQ(t)$ defined by
\begin{equation}
\label{rational-function-F(t)-nonempty-case}
\F(t) := \left(p_0\prod_{z \in \Z}\max(1, |a_z|)\right)\left(1 + \dfrac{4\left(\prod_{l \in \S, l \ne 2}l \right)\left(\prod_{z \in \Z}(D_z - 1)\right)}{t^2 - p_0^2\epsilon} \right)
\end{equation}
satisfies $(1)$ and $(2)$ in Lemma \ref{algebraic-family-hyperelliptic-curve-lemma}. Indeed, let $t = \dfrac{t_1}{t_2}$, where $t_1, t_2 \in \bZ$, $t_2 \ne 0$ and $\gcd(t_1, t_2) = 1$. For each prime $l$, denote by $v_l$ the $l$-adic valuation of $\bQ_l$. For each prime $l \in \S$ with $l \ne 2$, one knows that
\begin{align*}
v_l\left(\dfrac{1}{t^2 - p_0^2\epsilon}\right) = v_l(t_2^2) - v_l(t_1^2 - t_2^2p_0^2\epsilon).
\end{align*}
Assume that $t_1^2 - t_2^2p_0^2\epsilon \equiv 0 \pmod{l}$. Since $p_0 \ne l$ and $\epsilon$ is a quadratic non-residue in $\bF_l^{\times}$, it follows that $t_1 \equiv t_2 \equiv 0 \pmod{l}$, which is a contradiction. Hence we deduce that $v_l(t_1^2 - t_2^2p_0^2\epsilon) = 0$. Thus we see that
\begin{align*}
v_l\left(\dfrac{1}{t^2 - p_0^2\epsilon}\right) = v_l(t_2^2) \ge 0.
\end{align*}
Therefore $\dfrac{1}{t^2 - p_0^2\epsilon} \in \bZ_l$, and hence we deduce that
\begin{align*}
1 + \dfrac{4\left(\prod_{l \in \S, l \ne 2}l \right)\left(\prod_{z \in \Z}(D_z - 1)\right)}{t^2 - p_0^2\epsilon} \in 1 + l\bZ_l.
\end{align*}
By assumption $(D)$ and the choice of $p_0$, one also knows that $p_0\prod_{z \in \Z}\max(1, |a_z|) \in \bZ_l^{\times}$. Hence it follows that for each prime $l \in \S$ with $l \ne 2$, $\F(t) \in \bZ_l^{\times}$ for every $t \in \bQ$. Thus we have shown that if $2 \not\in \S$, then $\F(t)$ satisfies $(1)$ in Lemma \ref{algebraic-family-hyperelliptic-curve-lemma}. Hence it remains to show that if $2 \in \cS$, then $\F(t) \in \bZ_2^{\times}$ for every $t \in \bQ$.

Let first assume that $t_1$ is even. Hence $t_2$ is odd, and hence one sees that $t_1^2 - t_2^2p_0^2\epsilon \equiv 2 \pmod{4}$. Thus $v_2(t_1^2 - t_2^2p_0^2\epsilon) = 1$. Hence it follows that
\begin{align*}
v_2\left(\dfrac{2}{t^2 - p_0^2\epsilon}\right) = 1 + v_2(t_2^2) - v_2(t_1^2 - t_2^2p_0^2\epsilon) = 0,
\end{align*}
which implies that $\dfrac{2}{t^2 - p_0^2\epsilon} \in \bZ_2$.

Now assume that $t_1$ is odd. Since $\epsilon$ is even, one sees that $t_1^2 - t_2^2p_0^2\epsilon$ is odd. Hence it follows that
\begin{align*}
v_2\left(\dfrac{2}{t^2 - p_0^2\epsilon}\right) = 1 + v_2(t_2^2) - v_2(t_1^2 - t_2^2p_0^2\epsilon) = 1 + v_2(t_2^2) \ge 1.
\end{align*}
Thus we have shown that $\dfrac{2}{t^2 - p_0^2\epsilon} \in \bZ_2$. By the definition of $\F(t)$ and assumption $(D)$, we deduce that $\F(t) \in \bZ_2^{\times}$. Hence $\F(t)$ satisfies $(1)$ in Lemma \ref{algebraic-family-hyperelliptic-curve-lemma}.

Now we prove that $\F(t)$ satisfies $(2)$ in Lemma \ref{algebraic-family-hyperelliptic-curve-lemma}. Since $z$ is a rational zero or pole of $\G(t)$ for each $z \in \Z$, we deduce that $\G(\F(t))$ is defined (that is, not infinity) and nonzero for each $t \in \bQ$ if $\F(t) \ne z$ for every $z \in \Z$ and every $t \in \bQ$.

Assume that there is a rational number $t \in \bQ$ such that $\F(t) = z$ for some $z = a_z/b_z \in \Z$. We consider the following two subcases.

$\bullet$ \textit{Subcase 1. $a_z \ne 0$.}

We see that $\max(1, |a_z|) = |a_z|$. Hence it follows that
\begin{align*}
D_z\left(1 + \dfrac{4\left(\prod_{l \in \S, l \ne 2}l \right)\left(\prod_{w \in \Z}(D_w - 1)\right)}{t^2 - p_0^2\epsilon} \right) = 1.
\end{align*}
Upon multiplying by $t^2 - p_0^2\epsilon$ both sides of the identity above and simplifying, we deduce that
\begin{align*}
t^2 = p_0^2\epsilon - 4D_z\left(\prod_{\substack{l \in \S, l \ne 2}}l \right)\left(\prod_{\substack{w \in \Z\setminus \{z\}}}(D_w - 1)\right).
\end{align*}
Hence it follows from $(\ref{algebraic-families-Di-equation})$ that
\begin{align*}
t^2 = p_0\left(p_0\epsilon - 4b_z\text{sign}(a_z)\left(\prod_{w \in \Z\setminus \{z\}}\max(1, |a_w|)\right)\left(\prod_{l \in \S, l \ne 2}l \right)
\left(\prod_{w \in \Z\setminus \{z\}}(D_w - 1)\right)\right).
\end{align*}
This implies that $t \in \bZ$ and $t \equiv 0 \pmod{p_0}$. Hence $v_{p_0}(t^2) = 2v_{p_0}(t) \ge 2$. Thus we deduce that
\begin{align*}
p_0\epsilon - 4b_z\text{sign}(a_z)\left(\prod_{w \in \Z\setminus \{z\}}\max(1, |a_w|)\right)\left(\prod_{l \in \S, l \ne 2}l \right)
\left(\prod_{w \in \Z\setminus \{z\}}(D_w - 1)\right) \equiv 0 \pmod{p_0}.
\end{align*}
Hence
\begin{align*}
4b_z\text{sign}(a_z)\left(\prod_{w \in \Z\setminus \{z\}}\max(1, |a_w|)\right)\left(\prod_{l \in \S, l \ne 2}l \right)
\left(\prod_{w \in \Z\setminus \{z\}}(D_w - 1)\right)  \equiv 0 \pmod{p_0}.
\end{align*}
By $(\ref{algebraic-families-Di-equation})$, one knows that $D_w \equiv 0 \pmod{p_0}$ for every $w \in \Z$. Hence
\begin{align*}
\prod_{w \in \Z\setminus \{z\}}(D_w - 1) \equiv (-1)^{m - 1} \pmod{p_0}.
\end{align*}
Thus we deduce that
\begin{align*}
(-1)^{m - 1}4b_z\text{sign}(a_z)\left(\prod_{w \in \Z\setminus \{z\}}\max(1, |a_w|)\right)\left(\prod_{l \in \S, l \ne 2}l \right)  \equiv 0 \pmod{p_0},
\end{align*}
which is a contradiction to the choice of $p_0$. Therefore $\F(t) \ne z$ for every $t \in \bQ$.

$\bullet$ \textit{Subcase 2. $a_z = 0$.}

We see that $\F(t) = \dfrac{a_z}{b_z} = 0$. Hence we deduce from the definition of $\F(t)$ that
\begin{align*}
t^2 = p_0^2\epsilon - 4\left(\prod_{l \in \S, l \ne 2}l \right)\left(\prod_{w \in \Z}(D_w - 1)\right).
\end{align*}
This implies that $t \in \bZ$. Hence we deduce that
\begin{align*}
t^2 = p_0^2\epsilon \pmod{l}
\end{align*}
for each prime $l \in \S$ with $l \ne 2$. Since $\epsilon$ is a quadratic non-residue in $\bF_l^{\times}$, it follows that $t \equiv p_0 \equiv 0 \pmod{l}$, which is a contradiction to the choice of $p_0$. Thus, in any event, $\F(t) \ne z$ for every $t \in \bQ$. Therefore $\F(t)$ satisfies $(2)$ in Lemma \ref{algebraic-family-hyperelliptic-curve-lemma}.

$\star$ \textit{Case 2. $\Z = \emptyset$.}

In this case, let $\epsilon$ be the same as in \textit{Case 1}, and let $p_0$ be an odd prime such that $p_0 \not\in \S$. Let $\F(t) \in \bQ(t)$ be the rational function defined by
\begin{equation}
\label{rational-function-F(t)-empty-case}
\F(t) := 1 + \dfrac{4\left(\prod_{l \in \S, l \ne 2}l \right)}{t^2 - p_0^2\epsilon}.
\end{equation}
Using the same arguments as in \textit{Case 1}, one can show that $\F(t)$ satisfies $(1)$ and $(2)$ in Lemma \ref{algebraic-family-hyperelliptic-curve-lemma}.

\end{proof}

\begin{lemma}
\label{Lemma-Rational-functions-of-degree-4-belong-to-q-Z-q}

Let $\D(t) \in \bQ(t)$ be a nonzero rational function of the form
\begin{align*}
\D(t) := \dfrac{at^4 + bt^2 + c}{dt^4 + et^2 + f},
\end{align*}
where $a, b, c, d, e, f$ are integers. Let $q$ be an odd prime. Assume that there exists an integer $t_0$ such that
\begin{align*}
at_0^4 + bt_0^2 + c \equiv 0 \pmod{q},
\end{align*}
\begin{align*}
at_0^4 + bt_0^2 + c \not\equiv 0 \pmod{q^2},
\end{align*}
and
\begin{align*}
dt_0^4 + et_0^2 + f \not\equiv 0 \pmod{q}.
\end{align*}
Then there exists a rational function $\Gamma(t) \in \bQ(t)$ such that for every $t \in \bQ$, $\D(\Gamma(t))$ belongs to $q\bZ_q$, but does not belong to $q^2\bZ_q$.

\end{lemma}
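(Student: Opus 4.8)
The plan is to choose $\Gamma(t)$ so that for every rational $t$ the value $\Gamma(t)$ is a $q$-adic integer congruent to $t_0$ modulo $q^2$. Since the numerator and denominator of $\D$ are polynomials in $t^2$ (hence in $t^4$), substituting $\Gamma(t)$ will, modulo $q^2$, agree with substituting $t_0$, and the three congruence hypotheses on $t_0$ then pin down the $q$-adic valuation of $\D(\Gamma(t))$ to be exactly $1$. Explicitly, fix an integer $\epsilon$ that is a quadratic non-residue modulo $q$ (such $\epsilon$ exists because $q$ is odd), and set
\begin{equation*}
\Gamma(t) := t_0 + \frac{q^2}{t^2 - \epsilon} \in \bQ(t).
\end{equation*}
Since $\epsilon$ is a unit at $q$ and a non-residue mod $q$, it is not a square in $\bQ_q$, so $t^2 - \epsilon \ne 0$ for every $t \in \bQ$ and $\Gamma(t)$ is defined at every rational $t$.

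The one computation that needs a little care is the $q$-adic size of $t^2 - \epsilon$, and it is exactly the device already used in the proof of Lemma \ref{algebraic-family-hyperelliptic-curve-lemma}: writing $t = t_1/t_2$ in lowest terms, one has $t^2 - \epsilon = (t_1^2 - \epsilon t_2^2)/t_2^2$, and $q \nmid t_1^2 - \epsilon t_2^2$. Indeed, if $q \mid t_1^2 - \epsilon t_2^2$, then either $q \mid t_2$, which forces $q \mid t_1$ and contradicts $\gcd(t_1,t_2)=1$, or $q \nmid t_2$ and then $\epsilon \equiv (t_1/t_2)^2 \pmod{q}$ is a residue, again a contradiction. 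Hence $v_q(t^2-\epsilon) = -2\,v_q(t_2) \le 0$, so $q^2/(t^2-\epsilon) \in q^2\bZ_q$, and therefore $\Gamma(t) \in \bZ_q$ with $\Gamma(t) \equiv t_0 \pmod{q^2}$ for every $t \in \bQ$.

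Finally, from $\Gamma(t) \equiv t_0 \pmod{q^2}$ one gets $\Gamma(t)^2 \equiv t_0^2$ and $\Gamma(t)^4 \equiv t_0^4 \pmod{q^2}$, so that
\begin{align*}
a\Gamma(t)^4 + b\Gamma(t)^2 + c &\equiv a t_0^4 + b t_0^2 + c \pmod{q^2}, \\
d\Gamma(t)^4 + e\Gamma(t)^2 + f &\equiv d t_0^4 + e t_0^2 + f \pmod{q}.
\end{align*}
By the hypotheses on $t_0$, the right-hand side of the first congruence has $q$-adic valuation exactly $1$, and the right-hand side of the second is a $q$-adic unit; hence $v_q(a\Gamma(t)^4 + b\Gamma(t)^2 + c) = 1$ and $v_q(d\Gamma(t)^4 + e\Gamma(t)^2 + f) = 0$. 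In particular the denominator is nonzero, so $\D(\Gamma(t))$ is defined, and $v_q(\D(\Gamma(t))) = 1 - 0 = 1$, i.e. $\D(\Gamma(t)) \in q\bZ_q \setminus q^2\bZ_q$ for every $t \in \bQ$, which is the assertion. I do not anticipate a genuine obstacle here: the only subtle point is the estimate $v_q(t^2 - \epsilon) \le 0$, which depends on choosing $\epsilon$ to be a non-residue, and everything else is routine propagation of congruences through the even polynomials defining $\D$.
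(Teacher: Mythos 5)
Your proof is correct and follows essentially the same construction as the paper's: pick a non-residue $\epsilon$ mod $q$, set $\Gamma(t)=t_0+q^2/(t^2-\epsilon)$, use a lowest-terms valuation estimate to get $\Gamma(t)\in t_0+q^2\bZ_q$ for all rational $t$, and propagate the congruences through the even polynomials. The only cosmetic difference is that the paper writes $t^2-q_0^2\epsilon$ with an auxiliary odd prime $q_0\ne q$, but since $q_0^2$ is a square the extra factor changes nothing and you were right to drop it.
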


\begin{proof}

Let $\epsilon$ be an integer such that $\epsilon$ is a quadratic non-residue in $\bF_q^{\times}$. Let $q_0$ be an odd prime such that $q_0 \ne q$. We will show that the rational function $\Gamma(t) \in \bQ(t)$ defined by
\begin{align}
\label{Definition-The-definition-of-Gamma-function}
\Gamma(t) := t_0 + \dfrac{q^2}{t^2 - q_0^2\epsilon}
\end{align}
satisfies the assertions in Lemma \ref{Lemma-Rational-functions-of-degree-4-belong-to-q-Z-q}.

Since $\epsilon$ is not a square in $\bF_q^{\times}$, it follows that $t^2 - q_0^2\epsilon$ is non-zero for each $t \in \bQ$, and hence $\Gamma(t)$ is well-defined for every $t \in \bQ$.

We now prove that $\Gamma(t)$ belongs to $t_0 + q^2\bZ_q$ for each $t \in \bQ$. Indeed, take any rational number $t$, and write $t = \dfrac{t_1}{t_2}$, where $t_1, t_2$ are integers such that $t_2 \ne 0$ and $\gcd(t_1, t_2) = 1$. We see that
\begin{align*}
v_q\left(\dfrac{1}{t^2 - q_0^2\epsilon}\right) = v_q\left(\dfrac{t_2^2}{t_1^2 - q_0^2\epsilon t_2^2}\right) = v_q(t_2^2) - v_q(t_1^2 - q_0^2\epsilon t_2^2).
\end{align*}

If $t_2 \equiv 0 \pmod{q}$, then it follows that $t_1 \not\equiv 0 \pmod{q}$. Hence we deduce that
\begin{align*}
v_q(t_1^2 - q_0^2\epsilon t_2^2) = \min\left(v_q(t_1^2), v_q(q_0^2\epsilon t_2^2)\right) = \min\left(0, v_q(t_2^2)\right) = 0,
\end{align*}
and thus
\begin{align*}
v_q\left(\dfrac{1}{t^2 - q_0^2\epsilon}\right) = v_q(t_2^2) - v_q(t_1^2 - q_0^2\epsilon t_2^2) = 2v_q(t_2) \ge 2.
\end{align*}
Therefore $\dfrac{1}{t^2 - q_0^2\epsilon}$ belongs to $\bZ_q$, and hence it follows from $(\ref{Definition-The-definition-of-Gamma-function})$ that $\Gamma(t)$ belongs to $t_0 + q^2\bZ_q$.

If $t_2 \not\equiv 0 \pmod{q}$, it follows that $v_q(t_2^2) = 0$. We contend that $t_1^2 - q_0^2\epsilon t_2^2 \not\equiv 0 \pmod{q}$. Assume the contrary, that is, $t_1^2 - q_0^2\epsilon t_2^2 \equiv 0 \pmod{q}$. Since $t_2 \not\equiv 0 \pmod{q}$ and $q_0 \ne q$, we deduce that
\begin{align*}
\epsilon \equiv \left(\dfrac{t_1}{q_0t_2}\right)^2 \pmod{q},
\end{align*}
which contradicts the choice of $\epsilon$. This contradiction establishes that $t_1^2 - q_0^2\epsilon t_2^2 \not\equiv 0 \pmod{q}$, and thus
\begin{align*}
v_q\left(\dfrac{1}{t^2 - q_0^2\epsilon}\right) = v_q(t_2^2) - v_q(t_1^2 - q_0^2\epsilon t_2^2) = 0.
\end{align*}
Therefore $\dfrac{1}{t^2 - q_0^2\epsilon}$ belongs to $\bZ_q^{\times}$, and hence it follows from $(\ref{Definition-The-definition-of-Gamma-function})$ that $\Gamma(t)$ belongs to $t_0 + q^2\bZ_q$.

Since $\Gamma(t)$ belongs to $t_0 + q^2\bZ_q$, we see that
\begin{align*}
a(\Gamma(t))^4 + b(\Gamma(t))^2 + c \equiv at_0^4 + bt_0^2 + c \equiv 0 \pmod{q},
\end{align*}
\begin{align*}
a(\Gamma(t))^4 + b(\Gamma(t))^2 + c \equiv at_0^4 + bt_0^2 + c \not\equiv 0 \pmod{q^2},
\end{align*}
and
\begin{align*}
d(\Gamma(t))^4 + e(\Gamma(t))^2 + f \equiv dt_0^4 + et_0^2 + f \not\equiv 0 \pmod{q}
\end{align*}
for each $t \in \bQ$. The last congruence shows that
\begin{align*}
\dfrac{1}{c(\Gamma(t))^4 + d(\Gamma(t))^2 + e}
\end{align*}
belongs to $\bZ_q^{\times}$, and hence we deduce that for every $t \in \bQ$,
\begin{align*}
\D(\Gamma(t)) = \dfrac{a(\Gamma(t))^4 + b(\Gamma(t))^2 + c}{d(\Gamma(t))^4 + e(\Gamma(t))^2 + f}
\end{align*}
belongs to $q\bZ_q$, but does not belong to $q^2\bZ_q$. Thus our contention follows.

\end{proof}

The following two examples will be used in the proof of the main theorem in this section.

\begin{example}
\label{Example-The-first-rational-function-D1-related-to-the-construction-of-algebraic-families}

Let $\D_1(T) \in \bQ(T)$ be the rational function defined by
\begin{align}
\label{Definition-The-rational-function-D1-T}
\D_1(T) := \frac{45588894173298T^4 -  1641200890885920T^2 + 14770814323798008}{-5477180725633679T^4 + 197178506122812676T^2 - 1774606555105302716},
\end{align}
and define
\begin{align}
\label{Definition-The-rational-function-D1-ast-T}
\D_1^{\ast}(T) : = 7 + 1682\D_1(T) = \frac{-38340254920051483T^4  + 1380250355610428708T^2  -12422263806891130444}{5477180725633679T^4 - 197178506122812676T^2 + 1774606555105302716}.
\end{align}

Let $T_0 = 0$, and let $q = 31$. Since
\begin{align*}
12422263806891130444 = 2^2\cdot7^3\cdot 31 \cdot 433\cdot 3299 \cdot 10589 \cdot 19309,
\end{align*}
it follows that
\begin{align*}
v_q(-12422263806891130444) = v_{31}(-12422263806891130444) = 1,
\end{align*}
and hence we deduce that
\begin{align*}
&-38340254920051483T_0^4  + 1380250355610428708T_0^2 - 12422263806891130444 \\
&= -12422263806891130444 \equiv 0 \pmod{q}
\end{align*}
and
\begin{align*}
&-38340254920051483T_0^4  + 1380250355610428708T_0^2 - 12422263806891130444 \\
&= -12422263806891130444 \not\equiv 0 \pmod{q^2}.
\end{align*}
Since
\begin{align*}
1774606555105302716 = 2^2 \cdot 7^2 \cdot 47 \cdot 192640746320593,
\end{align*}
we see that
\begin{align*}
&5477180725633679T_0^4 - 197178506122812676T_0^2 + 1774606555105302716 \\
&= 1774606555105302716 \not\equiv 0 \pmod{q}.
\end{align*}

Let $\epsilon = 3$, and let $q_0 = 5$. Following the proof of Lemma \ref{Lemma-Rational-functions-of-degree-4-belong-to-q-Z-q}, we define the rational function $\Gamma_1(T) \in \bQ(T)$ by $(\ref{Definition-The-definition-of-Gamma-function})$, that is,
\begin{align}
\label{Definition-The-definition-of-the-rational-function-Gamma1}
\Gamma_1(T) := T_0 + \dfrac{q^2}{T^2 - q_0^2\epsilon} = \dfrac{961}{T^2 - 75}.
\end{align}
Applying Lemma \ref{Lemma-Rational-functions-of-degree-4-belong-to-q-Z-q} with $\D_1^{\ast}(T)$ in the role of $\D(t)$, we deduce that the rational function $\D_1^{\ast}(\Gamma_1(T))$ belongs to $31\bZ_{31}$, but does not belong to $31^2\bZ_{31}$, where
\begin{align}
\label{Definition-The-rational-function-D-1-ast-of-Gamma-1}
\D_1^{\ast}(\Gamma_1(T)) = \frac{\Sigma_{1, 1}(T)}{\Sigma_{1, 2}(T)}
\end{align}
with
\begin{align}
\label{Equation-Sigma-1-1}
\Sigma_{1, 1}(T) &= -12422263806891130444 T^8 + 3726679142067339133200 T^6 \\
&+ 855438785181123078355868 T^4 - 170240958125426027001880200 T^2 \nonumber \\
&- 25922975674046723162225380003 \nonumber
\end{align}
and
\begin{align}
\label{Equation-Sigma-1-2}
\Sigma_{1, 2}(T) &= 1774606555105302716 T^8 - 532381966531590814800 T^6 \\
&- 122205519918242118687196 T^4 + 24320125111216714469579400 T^2 \nonumber \\
&+ 3703283999134302153081910439. \nonumber
\end{align}

\end{example}

\begin{example}
\label{Example-The-second-rational-function-D2-related-to-the-construction-of-algebraic-families}

Let $\D_2(T) \in \bQ(T)$ be the rational function defined by
\begin{align}
\label{Definition-The-rational-function-D2-T}
\D_2(T) := \frac{-64380401708754T^4 + 2317693623118880T^2 - 20859235062503544}{407097080892401T^4 - 14655494912126204T^2 + 131899454209147204},
\end{align}
and define
\begin{align}
\label{Definition-The-rational-function-D2-ast-T}
\D_2^{\ast}(T) : = 133 + 1682\D_2(T) = \frac{-54143923915434895T^4  + 1949179850773171028T^2  - 17542605965314382876}{407097080892401T^4 - 14655494912126204T^2 + 131899454209147204}.
\end{align}

Let $T_0 = 0$, and let $q = 11$. Since
\begin{align*}
17542605965314382876 = 2^2 \cdot 7 \cdot 11 \cdot 56956512874397347,
\end{align*}
it follows that
\begin{align*}
-54143923915434895T_0^4  + 1949179850773171028T_0^2  - 17542605965314382876 \equiv 0 \pmod{11}
\end{align*}
but
\begin{align*}
-54143923915434895T_0^4  + 1949179850773171028T_0^2  - 17542605965314382876 \not\equiv 0 \pmod{11^2}.
\end{align*}

Since
\begin{align*}
131899454209147204 \equiv 8 \not\equiv 0 \pmod{11},
\end{align*}
we deduce that
\begin{align*}
407097080892401T^4 - 14655494912126204T^2 + 131899454209147204 \not\equiv 0 \pmod{7}.
\end{align*}

Let $\epsilon = 7$, and let $q_0 = 3$. Following the proof of Lemma \ref{Lemma-Rational-functions-of-degree-4-belong-to-q-Z-q}, we define the rational function $\Gamma_2(T) \in \bQ(T)$ by $(\ref{Definition-The-definition-of-Gamma-function})$, that is,
\begin{align}
\label{Definition-The-definition-of-the-rational-function-Gamma2}
\Gamma_2(T) := T_0 + \dfrac{q^2}{T^2 - q_0^2\epsilon} = \dfrac{121}{T^2 - 63}.
\end{align}
Applying Lemma \ref{Lemma-Rational-functions-of-degree-4-belong-to-q-Z-q} with $\D_2^{\ast}(T)$ in the role of $\D(t)$, we deduce that the rational function $\D_2^{\ast}(\Gamma_2(T))$ belongs to $11\bZ_{11}$, but does not belong to $11^2\bZ_{11}$, where
\begin{align}
\label{Definition-The-rational-function-D-2-ast-of-Gamma-2}
\D_2^{\ast}(\Gamma_2(T)) = \frac{\Sigma_{2, 1}(T)}{\Sigma_{2, 2}(T)}
\end{align}
with
\begin{align}
\label{Equation-Sigma-2-1}
\Sigma_{2, 1}(T) &= -17542605965314382876 T^8 + 4420736703259224484752 T^6 \\
&-389221676262826716788116 T^4 + 13950123258644442355341240 T^2 \nonumber \\
&-174687125980796870729105719 \nonumber
\end{align}
and
\begin{align}
\label{Equation-Sigma-2-2}
\Sigma_{2, 2}(T) &= 131899454209147204 T^8 - 33238662460705095408 T^6 \\
&+ 2926482501528191763292 T^4  - 104888292579475114826088 T^2 \nonumber \\
&+ 1313439132893945928914009. \nonumber
\end{align}

\end{example}

We recall the following separability criterion \cite{dq-separable-polynomials}.

\begin{theorem}
\label{the-criterion-for-separability-of-certain-polynomials-theorem}

Let $n, m, k$ be positive integers, and let $a, b, c, d, e$ be rational numbers such that $a \ne 0$. Let $p$ be an odd prime such that $a, b, c, d, e$ belong to $\bZ_p$ and $a \equiv 0 \pmod{p}$. Let $F(x) \in \bQ[x]$ be the polynomial defined by
\begin{align}
\label{the-polynomial-F-equation}
F(x) := ax^{2n + 2} + (bx^{2m} + c)(dx^{2k} + e).
\end{align}
Define
\begin{align*}
n_1 &:= (m + k)(v_p(a) - v_p(bd)) + m + k - 1, \\
n_2 &:= (m + k)(v_p(a) - v_p(b)) + m - 1, \\
n_3 &:= (m + k)(v_p(a) - v_p(d)) + k - 1, \\
n_4 &:= (m + k)v_p(a) - 1, \\
n_5 &:= v_p(a) - v_p(bd) + m + k - 1.
\end{align*}
Suppose that the following are true:
\begin{itemize}

\item [(S1)] $n > m + k - 1$ and $n > \max(n_1, n_2, n_3, n_4, n_5)$.

\item [(S2)] $ce \not\equiv 0 \pmod{p}$, $km \not\equiv 0 \pmod{p}$, and $b^ke^m + (-1)^{m + k + 1}c^kd^m \not\equiv 0 \pmod{p}$.

\end{itemize}
Then $F$ is separable: that is, it has exactly $2n + 2$ distinct roots in $\bC$.

\end{theorem}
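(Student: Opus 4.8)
The plan is to verify separability $p$-adically, using the hypothesis $a \equiv 0 \pmod{p}$ to make the reduction of $F$ mod $p$ drop degree, and then to split $F$ over $\bQ_p$ according to its Newton polygon. Since separability over $\bC$ is equivalent to separability over $\overline{\bQ}$, hence over $\overline{\bQ_p}$ (everything is in characteristic zero), it suffices to show that $F$ has $2n+2$ distinct roots in $\overline{\bQ_p}$. Write $G(x) := (bx^{2m}+c)(dx^{2k}+e)$, so that $F = ax^{2n+2}+G$ and $\bar F = \bar G$ in $\bF_p[x]$. The first point is that $(S2)$ says exactly that $\bar G$ is separable: $ce \not\equiv 0$ forces $\bar c,\bar e \ne 0$ (so $0$ is not a root, and at least one of $b,d$ must be a $p$-adic unit, since otherwise the third condition of $(S2)$ would fail); $km \not\equiv 0$ makes each surviving binomial factor $\bar b x^{2m}+\bar c$, $\bar d x^{2k}+\bar e$ separable; and $b^k e^m + (-1)^{m+k+1}c^k d^m \not\equiv 0$ is precisely the condition that these two binomials share no root in $\overline{\bF_p}$.

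Next I would apply Hensel's lemma (Theorem \ref{Theorem-Hensel-lemma}) to the reciprocal polynomial $x^{2n+2}F(1/x)$, whose leading coefficient is the unit $ce$ and whose reduction mod $p$ is $x^{\,2n+2-\deg\bar G}\cdot \widetilde{\bar G}(x)$, where $\widetilde{\bar G}(x) := x^{\deg\bar G}\bar G(1/x)$ is the reciprocal of $\bar G$; since $\bar G$ is separable with $\bar G(0)\ne 0$, so is $\widetilde{\bar G}$, and $\widetilde{\bar G}(0)\ne 0$, so the two factors are coprime. This produces a factorization $F = F_1 F_2$ in $\bQ_p[x]$ with $F_1$ of degree $\deg\bar G$ whose reduction is a unit multiple of $\bar G$, and $F_2$ of degree $N := 2n+2-\deg\bar G$. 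Then $F_1$ is separable because its reduction is; the roots of $F_1$ are exactly the roots of $F$ of valuation $0$, while all roots of $F_2$ have negative valuation (no root of $F$ has positive valuation, since $\bar F(0)=\overline{ce}\ne 0$), so $F_1$ and $F_2$ are coprime. Hence it remains only to prove that $F_2$ is separable.

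This last step will be the technical core, and it is here that $(S1)$ is used. Passing to reciprocals, one must show that the monic polynomial $A(x) := x^{N}F_2(1/x)/F_2(0) \in \bZ_p[x]$, which satisfies $\bar A = x^{N}$, is separable; its roots are the inverses of the roots of $F_2$, all of positive valuation, and they are governed by the left-hand (negative-slope) part of the Newton polygon of the monicized reciprocal $\hat F(x) := (ce)^{-1}x^{2n+2}F(1/x)$, whose vertices lie among the lattice points $(0,v_p(a))$, $(2n+2-2m-2k,\,v_p(bd))$, $(2n+2-2m,\,v_p(be))$, $(2n+2-2k,\,v_p(cd))$ and $(2n+2,0)$. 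The inequalities $n>m+k-1$ and $n>\max(n_1,\dots,n_5)$ are precisely the quantitative conditions which, in each of the (at most three) cases according to which of $b,d$ are $p$-adic units, force this negative-slope part to be a single segment with every intermediate lattice point strictly above it; rescaling the variable by the corresponding fractional power of $p$ then exhibits $A$ as a perturbation of a binomial model $a' x^{N'} + u'$ with $a',u'\in\bZ_p^{\times}$ and $N'\mid N$ — the leading unit coming from the term $ax^{2n+2}$ and the constant from the last vertex of the lower hull of the Newton polygon of $G$ — the thresholds in $(S1)$ being exactly what is needed to make that perturbation smaller, coefficient by coefficient, than the minimal separation of the roots of the model. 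Since $a'x^{N'}+u'$ is separable over $\overline{\bQ_p}$ (characteristic zero), Krasner's lemma (or a direct Hensel-type comparison) then matches the roots of $A$ with those of the model one-to-one, so $A$ — hence $F_2$, hence $F$ — has the expected number of distinct roots. The main obstacle is the bookkeeping in this paragraph: one must check, case by case, that each of $n_1,\dots,n_5$ corresponds to the correct "lattice point above the segment" (or perturbation-size) inequality, keeping track of how the shape of the lower hull of $G$, and thus which term supplies the constant of the binomial model, changes with the valuations of $b$ and $d$.
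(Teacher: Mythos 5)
The paper does not actually prove this theorem: it invokes Theorem~2.1 of \cite{dq-separable-polynomials}, and the remark that follows merely observes that the proof given there goes through with $\bZ_p$-coefficients in place of integer ones. So there is no in-paper argument to compare your attempt against, and your attempt must stand on its own. Your reduction to $\overline{\bQ_p}$, the deduction of separability of $\bar G$ from $(S2)$, and the Hensel split $F = F_1F_2$ over $\bQ_p$ obtained by lifting the coprime factorization of the reduction of the monic reciprocal (so that $F_1$ is controlled by $\bar G$, is separable, and is coprime to $F_2$) are all correct.

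The genuine gap is in your last step. You rescale so that the relevant Newton segment is horizontal and treat the result as a small perturbation of a binomial $u_0 + u_N y^N$, then invoke Krasner or a Hensel comparison to match roots. This fails whenever $p$ divides $N$: if $p^s$ exactly divides $N$, two roots $\beta$, $\beta\zeta$ of the binomial, with $\zeta$ a primitive $p^s$-th root of unity, differ by valuation $1/(p^{s-1}(p-1))$, while the perturbation contributed by the $y^{N+2\min(m,k)}$ term has valuation $2\min(m,k)\,v_p(a)/N$; condition $(S1)$ gives only $v_p(a)/N < 1/(2(m+k))$, which does not make that exceed $1/(p^{s-1}(p-1))$. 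Concretely, take $p=3$, $m=1$, $k=2$, $b=d=1$, $c=2$, $e=1$, $a=3$, $n=8$: then $N=12$, the rescaled reciprocal is $\tfrac12(1+y^{12})$ plus error, the $y^{14}$-coefficient of the error has valuation $1/6$, while the model's roots have pairwise differences of valuation as large as $1/2$, and the Hensel inequality (value at an approximate root exceeding twice the derivative's valuation) reads $1/6 > 1/3$ and fails. The theorem is still true there, so your use of $(S1)$ does not capture its actual role. Two further loose ends: the phrase ``a binomial model $a'x^{N'}+u'$ with $N'\mid N$'' is unclear if $N'<N$, since the degrees then disagree; and you do not verify that the negative-slope part of the Newton polygon is a single segment in the cases where $b$ or $d$ fails to be a unit. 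The robust route, and presumably the content of \cite{dq-separable-polynomials}, is to eliminate $a$ with the logarithmic derivative: a double root $\alpha$ satisfies $\alpha F'(\alpha) - (2n+2)F(\alpha) = 0$, and since $F = ax^{2n+2}+G$ this reduces to $\alpha G'(\alpha) - (2n+2)G(\alpha) = 0$, i.e.\ $N\,u\,w = -2mcw - 2keu$ with $u = b\alpha^{2m}+c$, $w = d\alpha^{2k}+e$. The Newton polygon fixes $v(\alpha)$; plugging in and comparing $p$-adic valuations on the two sides, using that $v_p(N)$ is a nonnegative integer while $(S1)$ pins the other valuations strictly inside $(-1,1)$ and $(S2)$ blocks cancellation, produces the contradiction in each case for $v_p(b)$, $v_p(d)$ with no need to control $v_p(N)$ at all.
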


\begin{remark}

Theorem \ref{the-criterion-for-separability-of-certain-polynomials-theorem} is a very mild generalization of Theorem 2.1 in \cite{dq-separable-polynomials}. The only difference between Theorem \ref{the-criterion-for-separability-of-certain-polynomials-theorem} and \cite[Theorem 2.1]{dq-separable-polynomials} is that in \cite[Theorem 2.1]{dq-separable-polynomials}, the author assumed that $a, b, c, d, e$ are integers, whereas in Theorem \ref{the-criterion-for-separability-of-certain-polynomials-theorem} presented here, we only need the assumption that $a, b, c, d, e$ belong to $\bZ_p$. Upon looking closely the proof of \cite[Theorem 2.1]{dq-separable-polynomials}, we see that it is sufficient to assume that $a, b, c, d, e$ are in $\bZ_p$, and hence Theorem \ref{the-criterion-for-separability-of-certain-polynomials-theorem} follows immediately from \cite[Theorem 2.1]{dq-separable-polynomials}.

\end{remark}

Using Theorem \ref{the-criterion-for-separability-of-certain-polynomials-theorem}, we prove the following corollaries that play a key role in constructing algebraic families of curves violating the Hasse principle.

\begin{corollary}
\label{Corollary-The-separability-of-the-first-algebraic-family-of-hyperelliptic-curves}

We maintain the same notation as in Example \ref{Example-The-first-rational-function-D1-related-to-the-construction-of-algebraic-families}. Let $\D_1(T)$, $\D_1^{\ast}(T), \Gamma_1(T) \in \bQ(T)$ be the rational functions defined by $(\ref{Definition-The-rational-function-D1-T})$, $(\ref{Definition-The-rational-function-D1-ast-T})$, $(\ref{Definition-The-definition-of-the-rational-function-Gamma1})$, respectively in Example \ref{Example-The-first-rational-function-D1-related-to-the-construction-of-algebraic-families}. Let $n$ be a positive integer such that $n > 5$. For each rational number $T \in \bQ$, let $\cP_{1, T}(x) \in \bQ[x]$ be the polynomial of degree $2n + 2$ given by
\begin{align}
\label{Definition-The-definition-of-the-polynomial-P-1-T}
&\cP_{1, T}(x) := 118579927725(\D_1^{\ast}(\Gamma_1(T)))^2x^{2n + 2}  \\
&+ \left(2\left(29(\D_1^{\ast}(\Gamma_1(T)))^2 + 123192\right)x^2 - 16689645\right)\left(261\left(29(\D_1^{\ast}(\Gamma_1(T)))^2 + 123192\right)x^2 - 33379290\right), \nonumber
\end{align}
where the composition $\D_1^{\ast}(\Gamma_1(T))$ of $\D_1^{\ast}(T)$ and $\Gamma_1(T)$ is given by $(\ref{Definition-The-rational-function-D-1-ast-of-Gamma-1})$ in Example \ref{Example-The-first-rational-function-D1-related-to-the-construction-of-algebraic-families}. Then for each $T \in \bQ$, the polynomial $\cP_{1, T}(x)$ is separable: that is, it has exactly $2n + 2$ distinct roots in $\bC$.

\end{corollary}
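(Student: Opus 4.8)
The plan is to apply the separability criterion, Theorem~\ref{the-criterion-for-separability-of-certain-polynomials-theorem}, with the prime $p=31$ and exponents $m=k=1$. Fix $T\in\bQ$ and abbreviate $W:=\D_1^{\ast}(\Gamma_1(T))$. First I would match $\cP_{1,T}(x)$ to the polynomial $F(x)=ax^{2n+2}+(bx^{2m}+c)(dx^{2k}+e)$ of $(\ref{the-polynomial-F-equation})$ by setting
\begin{align*}
a &:= 118579927725\, W^2, \quad b := 2\bigl(29W^2+123192\bigr), \quad c := -16689645, \\
d &:= 261\bigl(29W^2+123192\bigr), \quad e := -33379290.
\end{align*}
By Example~\ref{Example-The-first-rational-function-D1-related-to-the-construction-of-algebraic-families} (which rests on Lemma~\ref{Lemma-Rational-functions-of-degree-4-belong-to-q-Z-q}), $W$ lies in $31\bZ_{31}\setminus 31^2\bZ_{31}$ for every $T\in\bQ$; in particular $W\ne 0$, so $a\ne 0$ and $\cP_{1,T}(x)$ genuinely has degree $2n+2$, and moreover $v_{31}(W)=1$.

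Next I would check the standing hypotheses of Theorem~\ref{the-criterion-for-separability-of-certain-polynomials-theorem} for $p=31$. Since $W\in\bZ_{31}$ and all displayed constants are integers, each of $a,b,c,d,e$ lies in $\bZ_{31}$. For the valuation of the leading coefficient, factor $118579927725 = 29\cdot 63945^2$ with $63945 = 3^2\cdot 5\cdot 7^2\cdot 29$; as neither $29$ nor $63945$ is divisible by $31$, one gets $v_{31}(118579927725)=0$ and hence $v_{31}(a)=2v_{31}(W)=2$. In particular $a\equiv 0\pmod{31}$, so the hypotheses on $p$ are satisfied.

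It then remains to verify (S1) and (S2). Because $W\equiv 0\pmod{31}$, the residues of $b,c,d,e$ modulo $31$ depend only on their constant parts, and a short computation shows $b\equiv 27$, $c\equiv 11$, $d\equiv 5$, $e\equiv 22\pmod{31}$, all units; thus $v_{31}(b)=v_{31}(c)=v_{31}(d)=v_{31}(e)=0$. With $v_p(a)=2$, $v_p(bd)=0$ and $m=k=1$, the quantities in (S1) become $n_1=5$, $n_2=n_3=4$, $n_4=n_5=3$, so $\max(n_1,\dots,n_5)=5$ while $m+k-1=1$; hence (S1) amounts exactly to $n>5$, the hypothesis of the corollary. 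For (S2): $ce\not\equiv 0$ and $km=1\not\equiv 0\pmod{31}$ are clear, and $b^ke^m+(-1)^{m+k+1}c^kd^m=be-cd\equiv 27\cdot 22-11\cdot 5\equiv 12\not\equiv 0\pmod{31}$. Thus all hypotheses of Theorem~\ref{the-criterion-for-separability-of-certain-polynomials-theorem} hold, and it follows that $\cP_{1,T}(x)$ has exactly $2n+2$ distinct roots in $\bC$ for every $T\in\bQ$, as claimed.

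The only genuinely delicate point is the arithmetic bookkeeping that pins down the threshold: one must confirm that $31$ divides the leading coefficient $118579927725\,W^2$ to order exactly $2$ — equivalently, that $31\nmid 29\cdot 63945$ and $v_{31}(\D_1^{\ast}(\Gamma_1(T)))=1$ — since this is precisely what makes $n_1=5$ and renders the hypothesis $n>5$ both exactly what the criterion needs. Everything else reduces to checking that $b,c,d,e$ and the combination $be-cd$ are $31$-adic units, which can be read off their constant terms thanks to $W\equiv 0\pmod{31}$.
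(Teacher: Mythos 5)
Your proposal is correct and follows the same route as the paper: identify the coefficients $a,b,c,d,e$ with $p=31$, $m=k=1$, use the fact from Example~\ref{Example-The-first-rational-function-D1-related-to-the-construction-of-algebraic-families} that $v_{31}(\D_1^{\ast}(\Gamma_1(T)))=1$ to get $v_{31}(a)=2$, compute $n_1,\dots,n_5$ to show $\max=5$, and verify (S2) via the residues of $b,c,d,e$ modulo $31$. The only cosmetic differences are that you factor $118579927725=29\cdot 63945^2$ and record the individual residues $c\equiv 11$, $e\equiv 22\pmod{31}$ rather than just $ce\equiv 25$, but the argument and all checked values match the paper's proof.
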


\begin{proof}

Throughout the proof, we maintain the same notation as in Theorem \ref{the-criterion-for-separability-of-certain-polynomials-theorem}. Take any rational number $T \in \bQ$, and define
\begin{align*}
a &:= 118579927725(\D_1^{\ast}(\Gamma_1(T)))^2, \\
b &:= 2\left(29(\D_1^{\ast}(\Gamma_1(T)))^2 + 123192\right), \\
c &:= -16689645, \\
d &:= 261\left(29(\D_1^{\ast}(\Gamma_1(T)))^2 + 123192\right), \\
e &:= -33379290.
\end{align*}
Let $p = 31$, and let $m = k = 1$. Since $118579927725 \equiv 27 \not\equiv 0 \pmod{31}$, $123192 \equiv 29 \not\equiv 0 \pmod{31}$, it follows from Example \ref{Example-The-first-rational-function-D1-related-to-the-construction-of-algebraic-families} that
\begin{align*}
v_{p}(a) = v_{31}\left( 118579927725(\D_1^{\ast}(\Gamma_1(T)))^2 \right) = 2v_{31}\left(\D_1^{\ast}(\Gamma_1(T)) \right) = 2,
\end{align*}
\begin{align*}
v_p(b) = v_{31}\left(2\left(29(\D_1^{\ast}(\Gamma_1(T)))^2 + 123192\right)\right) = v_{31}(123192) = 0,
\end{align*}
and
\begin{align*}
v_p(d) = v_{31}\left(261\left(29(\D_1^{\ast}(\Gamma_1(T)))^2 + 123192\right)\right) = v_{31}(123192) = 0.
\end{align*}

We see that
\begin{align*}
n_1 &:= (m + k)(v_p(a) - v_p(bd)) + m + k - 1 = 2v_p(a) + 1 = 5, \\
n_2 &:= (m + k)(v_p(a) - v_p(b)) + m - 1 = 2v_p(a) = 4, \\
n_3 &:= (m + k)(v_p(a) - v_p(d)) + k - 1 = 2v_p(a) = 4, \\
n_4 &:= (m + k)v_p(a) - 1 = 2v_p(a) - 1 = 3, \\
n_5 &:= v_p(a) - v_p(bd) + m + k - 1 = 2 + 1 = 3,
\end{align*}
and hence
\begin{align*}
\max(n_1, n_2, n_3, n_4, n_5) = 5.
\end{align*}
By assumption, we know that
\begin{align*}
n > 5 = \max(n_1, n_2, n_3, n_4, n_5),
\end{align*}
and hence condition $(S1)$ in Theorem \ref{the-criterion-for-separability-of-certain-polynomials-theorem} is satisfied.

It is obvious that $km = 1 \not\equiv 0 \pmod{31}$ and
\begin{align*}
ce = (-16689645)\cdot (-33379290) \equiv 25 \not\equiv 0 \pmod{31}.
\end{align*}
Furthermore, since $\D_1^{\ast}(\Gamma_1(T))$ belongs to $31\bZ_{31}$, we deduce that
\begin{align*}
b^ke^m + (-1)^{m + k + 1}c^kd^m &= be - cd \\
&= \left(2\left(29(\D_1^{\ast}(\Gamma_1(T)))^2 + 123192\right)\right)(-33379290) \\
&- (-16689645)\left(261\left(29(\D_1^{\ast}(\Gamma_1(T)))^2 + 123192\right)\right) \\
&\equiv 2\cdot 123192 \cdot (-33379290) + 16689645 \cdot 261 \cdot 123192 \\
&\equiv 12 \not\equiv 0 \pmod{31}.
\end{align*}
Therefore condition $(S2)$ in Theorem \ref{the-criterion-for-separability-of-certain-polynomials-theorem} is satisfied, and hence the polynomial $\cP_{1, T}(x)$ is separable. Since $T$ is an arbitrary rational number, our contention follows.

\end{proof}

\begin{corollary}
\label{Corollary-The-separability-of-the-second-algebraic-family-of-hyperelliptic-curves}

We maintain the same notation as in Example \ref{Example-The-second-rational-function-D2-related-to-the-construction-of-algebraic-families}. Let $\D_2(T)$, $\D_2^{\ast}(T), \Gamma_2(T) \in \bQ(T)$ be the rational functions defined by $(\ref{Definition-The-rational-function-D2-T})$, $(\ref{Definition-The-rational-function-D2-ast-T})$, $(\ref{Definition-The-definition-of-the-rational-function-Gamma2})$, respectively in Example \ref{Example-The-second-rational-function-D2-related-to-the-construction-of-algebraic-families}. Let $n$ be a positive integer such that $n > 5$. For each rational number $T \in \bQ$, let $\cP_{2, T}(x) \in \bQ[x]$ be the polynomial of degree $2n + 2$ given by
\begin{align}
\label{Definition-The-definition-of-the-polynomial-P-2-T}
&\cP_{2, T}(x) := 84898109(\D_2^{\ast}(\Gamma_2(T)))^2 x^{2n + 2} \\
&+ \left(2\left(29(\D_2^{\ast}(\Gamma_2(T)))^2 - 40600\right)x^2 + 49619\right)\left(261\left(29(\D_2^{\ast}(\Gamma_2(T)))^2 - 40600\right)x^2 + 99238\right),
\end{align}
where the composition $\D_2^{\ast}(\Gamma_2(T))$ of $\D_2^{\ast}(T)$ and $\Gamma_2(T)$ is given by $(\ref{Definition-The-rational-function-D-2-ast-of-Gamma-2})$ in Example \ref{Example-The-second-rational-function-D2-related-to-the-construction-of-algebraic-families}. Then for each $T \in \bQ$, the polynomial $\cP_{2, T}(x)$ is separable: that is, it has exactly $2n + 2$ distinct roots in $\bC$.

\end{corollary}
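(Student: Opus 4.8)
The plan is to apply the separability criterion of Theorem \ref{the-criterion-for-separability-of-certain-polynomials-theorem} exactly as in the proof of Corollary \ref{Corollary-The-separability-of-the-first-algebraic-family-of-hyperelliptic-curves}, but now with the prime $p = 11$ in place of $31$ and with $m = k = 1$. Fixing an arbitrary rational number $T \in \bQ$, I would set
\begin{align*}
a &:= 84898109(\D_2^{\ast}(\Gamma_2(T)))^2, \quad b := 2\left(29(\D_2^{\ast}(\Gamma_2(T)))^2 - 40600\right), \quad c := 49619, \\
d &:= 261\left(29(\D_2^{\ast}(\Gamma_2(T)))^2 - 40600\right), \quad e := 99238 = 2c,
\end{align*}
so that $\cP_{2, T}(x) = ax^{2n+2} + (bx^2 + c)(dx^2 + e)$ has precisely the shape $(\ref{the-polynomial-F-equation})$. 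The first point to record is that $a, b, c, d, e$ all lie in $\bZ_{11}$: this is clear for $c$ and $e$, and for $a, b, d$ it follows from the assertion established in Example \ref{Example-The-second-rational-function-D2-related-to-the-construction-of-algebraic-families} that $\D_2^{\ast}(\Gamma_2(T))$ belongs to $11\bZ_{11}$ (hence to $\bZ_{11}$) for every $T \in \bQ$.

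Next I would compute the relevant $11$-adic valuations. Since $\D_2^{\ast}(\Gamma_2(T)) \in 11\bZ_{11} \setminus 11^2\bZ_{11}$ and $84898109 \equiv 10 \not\equiv 0 \pmod{11}$, one gets $v_{11}(a) = 2 v_{11}(\D_2^{\ast}(\Gamma_2(T))) = 2$; in particular $a \equiv 0 \pmod{11}$, as required by Theorem \ref{the-criterion-for-separability-of-certain-polynomials-theorem}. Because $29(\D_2^{\ast}(\Gamma_2(T)))^2 \equiv 0 \pmod{11}$ while $40600 \equiv -1 \not\equiv 0 \pmod{11}$, one finds $b \equiv 2 \not\equiv 0$ and $d \equiv 261 \equiv 8 \not\equiv 0 \pmod{11}$, hence $v_{11}(b) = v_{11}(d) = v_{11}(bd) = 0$. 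Substituting $v_p(a) = 2$, $v_p(b) = v_p(d) = v_p(bd) = 0$ and $m = k = 1$ into the definitions of $n_1, \ldots, n_5$ yields $n_1 = 5$, $n_2 = n_3 = 4$, $n_4 = n_5 = 3$, so $\max(n_1, \ldots, n_5) = 5$; since $n > 5$ by hypothesis and $n > m + k - 1 = 1$, condition $(S1)$ holds.

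Finally I would verify $(S2)$. Here $km = 1 \not\equiv 0 \pmod{11}$; from $c \equiv 9$ and $e = 2c \equiv 7 \pmod{11}$ one gets $ce \equiv 63 \equiv 8 \not\equiv 0 \pmod{11}$; and, reducing modulo $11$ using $\D_2^{\ast}(\Gamma_2(T)) \equiv 0$ so that $b \equiv 2$ and $d \equiv 8$, one has
\begin{align*}
b^k e^m + (-1)^{m+k+1} c^k d^m = be - cd \equiv 2\cdot 7 - 9\cdot 8 \equiv -58 \equiv 8 \not\equiv 0 \pmod{11}.
\end{align*}
Thus $(S2)$ is satisfied, Theorem \ref{the-criterion-for-separability-of-certain-polynomials-theorem} applies, and $\cP_{2, T}(x)$ has exactly $2n+2$ distinct roots in $\bC$; since $T \in \bQ$ was arbitrary, the corollary follows. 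There is no genuine obstacle here beyond the bookkeeping of the $11$-adic reductions: the only subtlety is that Theorem \ref{the-criterion-for-separability-of-certain-polynomials-theorem} requires both $a, b, c, d, e \in \bZ_{11}$ and $a \equiv 0 \pmod{11}$, and these are precisely the properties that Example \ref{Example-The-second-rational-function-D2-related-to-the-construction-of-algebraic-families} was arranged to guarantee for $\D_2^{\ast}(\Gamma_2(T))$.
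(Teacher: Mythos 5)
Your proof is correct and follows essentially the same route as the paper: you apply Theorem \ref{the-criterion-for-separability-of-certain-polynomials-theorem} with the identical choices $a, b, c, d, e$, $p = 11$, $m = k = 1$, verify $v_{11}(a) = 2$ and $v_{11}(b) = v_{11}(d) = 0$ using the fact from Example \ref{Example-The-second-rational-function-D2-related-to-the-construction-of-algebraic-families} that $\D_2^{\ast}(\Gamma_2(T)) \in 11\bZ_{11} \setminus 11^2\bZ_{11}$, compute $\max(n_1,\ldots,n_5) = 5$, and check $(S2)$ by reducing mod $11$. Your arithmetic (e.g.\ $b \equiv 2$, $c \equiv 9$, $d \equiv 8$, $e \equiv 7$, $ce \equiv 8$, $be - cd \equiv 8 \pmod{11}$) agrees with the paper's.
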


\begin{proof}

Throughout the proof, we maintain the same notation as in Theorem \ref{the-criterion-for-separability-of-certain-polynomials-theorem}. Take any rational number $T \in \bQ$, and define
\begin{align*}
a &:= 84898109(\D_2^{\ast}(\Gamma_2(T)))^2, \\
b &:= 2\left(29(\D_2^{\ast}(\Gamma_2(T)))^2 - 40600\right), \\
c &:= 49619, \\
d &:= 261\left(29(\D_2^{\ast}(\Gamma_2(T)))^2 - 40600\right), \\
e &:= 99238.
\end{align*}
Let $p = 11$, and let $m = k = 1$. Since $84898109 \equiv 10 \not\equiv 0 \pmod{11}$, $40600 \equiv 10 \not\equiv 0 \pmod{11}$, it follows from Example \ref{Example-The-second-rational-function-D2-related-to-the-construction-of-algebraic-families} that
\begin{align*}
v_{p}(a) = v_{11}\left(84898109(\D_2^{\ast}(\Gamma_2(T)))^2 \right) = 2v_{11}\left(\D_2^{\ast}(\Gamma_2(T))\right) = 2,
\end{align*}
\begin{align*}
v_p(b) = v_{11}\left(2\left(29(\D_2^{\ast}(\Gamma_2(T)))^2 - 40600\right)\right) = v_{11}(40600) = 0,
\end{align*}
and
\begin{align*}
v_p(d) = v_{31}\left(261\left(29(\D_2^{\ast}(\Gamma_2(T)))^2 - 40600\right)\right) = v_{11}(40600) = 0.
\end{align*}

We see that
\begin{align*}
n_1 &:= (m + k)(v_p(a) - v_p(bd)) + m + k - 1 = 2v_p(a) + 1 = 5, \\
n_2 &:= (m + k)(v_p(a) - v_p(b)) + m - 1 = 2v_p(a) = 4, \\
n_3 &:= (m + k)(v_p(a) - v_p(d)) + k - 1 = 2v_p(a) = 4, \\
n_4 &:= (m + k)v_p(a) - 1 = 2v_p(a) - 1 = 3, \\
n_5 &:= v_p(a) - v_p(bd) + m + k - 1 = 2 + 1 = 3,
\end{align*}
and hence
\begin{align*}
\max(n_1, n_2, n_3, n_4, n_5) = 5.
\end{align*}
By assumption, we know that
\begin{align*}
n > 5 = \max(n_1, n_2, n_3, n_4, n_5),
\end{align*}
and hence condition $(S1)$ in Theorem \ref{the-criterion-for-separability-of-certain-polynomials-theorem} is satisfied.

It is obvious that $km = 1 \not\equiv 0 \pmod{11}$ and
\begin{align*}
ce = 49619 \cdot 99238 \equiv 8 \not\equiv 0 \pmod{11}.
\end{align*}
Since $\D_2^{\ast}(\Gamma_2(T))$ belongs to $11\bZ_{11}$, we deduce that
\begin{align*}
b^ke^m + (-1)^{m + k + 1}c^kd^m &= be - cd \\
&= \left(2\left(29(\D_2^{\ast}(\Gamma_2(T)))^2 - 40600\right)\right)(99238) \\
&- (49619)\left(261\left(29(\D_2^{\ast}(\Gamma_2(T)))^2 - 40600\right)\right) \\
&\equiv 2\cdot (-40600) \cdot 99238 - (49619) \cdot 261 \cdot (-40600) \\
&\equiv 8 \not\equiv 0 \pmod{11}.
\end{align*}
Therefore condition $(S2)$ in Theorem \ref{the-criterion-for-separability-of-certain-polynomials-theorem} is satisfied, and hence the polynomial $\cP_{2, T}(x)$ is separable. Since $T$ is an arbitrary rational number, our contention follows.

\end{proof}

For the rest of this section, let
\begin{align*}
\A_1 := \left\{n \in \bZ: n > 5, \; \; n \not\equiv 0 \pmod{4} \; \; \text{and} \; \; n \not\equiv 21 \pmod{29} \right\},
\end{align*}
and
\begin{align*}
\A_2 := \left\{n \in \bZ: n > 5, \; \; n \not\equiv 0 \pmod{4} \; \; \text{and} \; \; n \not\equiv 8 \pmod{29} \right\}.
\end{align*}
We see that
\begin{align}
\label{Equation-the-union-of-A1-and-A2-is-the-set-of-all-n-greater-than-5-and-not-equiv-to-zero-mod-4}
\A_1 \cup \A_2 = \left\{n \in \bZ: n > 5 \; \; \text{and} \; \; n \not\equiv 0 \pmod{4} \right\}.
\end{align}

We now prove the main theorem in this section.

\begin{theorem}
\label{algebraic-family-hyperelliptic-curve-hp-theorem}

For each $n \in \A_1$ and each rational number $T \in \bQ$, let $\cP_{1, T}(x) \in \bQ[x]$ be the polynomial of degree $2n + 2$ defined by $(\ref{Definition-The-definition-of-the-polynomial-P-1-T})$ in Corollary \ref{Corollary-The-separability-of-the-first-algebraic-family-of-hyperelliptic-curves}. For each $n \in \A_2$ and each rational number $T \in \bQ$, let $\cP_{2, T}(x) \in \bQ[x]$ be the polynomial of degree $2n + 2$ defined by $(\ref{Definition-The-definition-of-the-polynomial-P-2-T})$ in Corollary \ref{Corollary-The-separability-of-the-second-algebraic-family-of-hyperelliptic-curves}. Then
\begin{itemize}

\item [(i)] For each $n \in \A_1$ and each rational number $T \in \bQ$, the hyperelliptic curve $\cC_{n, T, (29, 1, 3)}^{(7, 261, 15)}$ of genus $n$ satisfies HP1 and HP2, where $\cC_{n, T, (29, 1, 3)}^{(7, 261, 15)}$ is the smooth projective model of the affine curve defined by
\begin{align*}
\cC_{n, T, (29, 1, 3)}^{(7, 261, 15)} : z^2 = \cP_{1, T}(x).
\end{align*}

\item [(ii)] For each $n \in \A_2$ and each rational number $T \in \bQ$, the hyperelliptic curve $\cC_{n, T, (29, 1, 3)}^{(133, 29, 27)}$ of genus $n$ satisfies HP1 and HP2, where $\cC_{n, T, (29, 1, 3)}^{(133, 29, 27)}$ is the smooth projective model of the affine curve defined by
\begin{align*}
\cC_{n, T, (29, 1, 3)}^{(133, 29, 27)} &: z^2 = \cP_{2, T}(x).
\end{align*}

\end{itemize}

\end{theorem}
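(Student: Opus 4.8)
The plan is to exhibit each curve in the statement as a member of the family $(\ref{eqn-hyperelliptic-curve-C})$ and then to check, for every $T\in\bQ$, the hypotheses of Corollary \ref{hyperelliptic-curves-hp-corollary} when the genus $n$ is odd, and of Corollary \ref{n-even-hyperelliptic-curve-hp-corollary} when $n\equiv 2\pmod 4$. First I would match coefficients: taking $(p,b,d)=(29,1,3)$ and $(\alpha,\beta,\gamma)=(\D_1^{\ast}(\Gamma_1(T)),261,15)$, one computes $Q=4bdp\gamma-4b^2\beta-d^2p\beta=-63945$, $\beta Q=-16689645$, $P=p\alpha^2+2\beta^2-2p\gamma^2=29\alpha^2+123192$ and $p\alpha^2Q^2=118579927725\,\alpha^2$, so that $p\alpha^2Q^2x^{2n+2}+(2b^2Px^2+\beta Q)(d^2pPx^2+2\beta Q)$ is exactly the polynomial $\cP_{1,T}(x)$ of $(\ref{Definition-The-definition-of-the-polynomial-P-1-T})$; hence $\cC_{n,T,(29,1,3)}^{(7,261,15)}$ is the curve $(\ref{eqn-hyperelliptic-curve-C})$ for this sextuple. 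In the same way $\cC_{n,T,(29,1,3)}^{(133,29,27)}$ is $(\ref{eqn-hyperelliptic-curve-C})$ for the sextuple $(29,1,3,\D_2^{\ast}(\Gamma_2(T)),29,27)$, with $Q=1711$, $\beta Q=49619$, $P=29\alpha^2-40600$ and $p\alpha^2Q^2=84898109\,\alpha^2$.

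For part (i) the task is to verify that $(29,1,3,\D_1^{\ast}(\Gamma_1(T)),261,15)$ satisfies $(A1)$--$(A5)$, $(B1)$ and $(S)$ for every $T\in\bQ$. Conditions $(A1)$ and $(A2)$ do not involve $T$ and are those checked for $(29,1,3)$ in Section \ref{infinitude-threefold-section}: $29\equiv 5\pmod 8$, $3$ is a quadratic non-residue modulo $29$, and $q=|29\cdot 9-4|=257$ is prime with $b=1$ odd, $b\not\equiv 0\pmod 3$ and $b\not\equiv 0\pmod{29}$. For $(A3)$--$(A5)$ and $(B1)$ I would first verify them for the base sextuple $(29,1,3,7,261,15)$: here $\bar\beta_0=\beta_0/29=9$, the quantities $P_0,Q_0$ are nonzero, the congruence $\gamma_0Q_1^{(0)}+bdP_1^{(0)}=-20184\equiv 0\pmod{29^2}$ gives $(A5)$, $bd-\bar\beta_0\gamma_0=-132\equiv 0\pmod 4$ gives $(B1)$, the unit conditions $7,261,15\in\bZ_2^{\times}$, $7,15,3\in\bZ_{29}^{\times}$ and $261\in\bZ_{29}$ give $(A4)$, and $(A3)$ holds because the smooth conic $29U^2-V^2-\beta_0P_0Q_0T^2=0$ is everywhere locally solvable (as can be read off from the norm computations already made), hence has a primitive integer point which may be chosen with $u_0v_0t_0\ne 0$. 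Then the machinery of Section \ref{infinitude-hyperelliptic-curve-hp-section} --- Lemma \ref{infinitude-alpha-beta-gamma-lemma}, Lemma \ref{infinitude-(A,B)-lemma} and Lemma \ref{algebraic-family-hyperelliptic-curve-lemma} applied with the prime set $\S=\{2,29\}$ --- produces the rational function $\D_1^{\ast}(\Gamma_1(T))$, whose value at each $T\in\bQ$ equals $\alpha_0+2p^2C$ for some pair $(A,B)$ satisfying $(C1)$--$(C3)$; by Lemma \ref{infinitude-alpha-beta-gamma-lemma} the triple $(\D_1^{\ast}(\Gamma_1(T)),261,15)$ then still satisfies $(A3)$--$(A5)$ and $(B1)$, the corresponding conic point being $(u_0+AC,v_0+BC,t_0)$. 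Finally $(S)$ holds for all $T$ and all $n>5$ by Corollary \ref{Corollary-The-separability-of-the-first-algebraic-family-of-hyperelliptic-curves}; the role of $\Gamma_1$ is exactly to force $v_{31}(\D_1^{\ast}(\Gamma_1(T)))=1$, hence $v_{31}(p\alpha^2Q^2)=2$, which is the hypothesis ``$a\equiv 0\pmod{31}$'' of the separability criterion Theorem \ref{the-criterion-for-separability-of-certain-polynomials-theorem}, the inequality $(S1)$ then being guaranteed by $n>5=\max(n_1,\dots,n_5)$.

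To finish part (i), note that $\alpha=\D_1^{\ast}(\Gamma_1(T))\equiv 7\pmod{29}$ for every $T$ (the added term $2\cdot 29^2C$ vanishing modulo $29^2$), so, with $\gamma=15$, one has $-2(\gamma/\alpha)^2\equiv -2(15\cdot 7^{-1})^2\equiv -2(-2)^2\equiv 21\pmod{29}$; thus the residue condition appearing in both $(A6)$ and $(B2)$ reads precisely $n\not\equiv 21\pmod{29}$, which is part of the definition of $\A_1$. Hence for $n\in\A_1$ odd (so $n\ge 7\ge 3$) Corollary \ref{hyperelliptic-curves-hp-corollary} applies, and for $n\in\A_1$ with $n\equiv 2\pmod 4$ (so $n\ge 6\ge 2$) Corollary \ref{n-even-hyperelliptic-curve-hp-corollary} applies using the $(B1)$ already verified; in either case $\cC_{n,T,(29,1,3)}^{(7,261,15)}$ satisfies HP1 and HP2, and it has genus $n$ since $\cP_{1,T}$ is separable of degree $2n+2$, its leading coefficient $118579927725(\D_1^{\ast}(\Gamma_1(T)))^2$ being nonzero because $v_{31}(\D_1^{\ast}(\Gamma_1(T)))=1$. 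As every element of $\A_1$ is odd or $\equiv 2\pmod 4$, part (i) follows. Part (ii) is proved identically with the base sextuple $(29,1,3,133,29,27)$, $\bar\beta_0=1$, the verifications $\gamma_0Q_1^{(0)}+bdP_1^{(0)}=50460\equiv 0\pmod{29^2}$ and $bd-\bar\beta_0\gamma_0=-24\equiv 0\pmod 4$, separability from Corollary \ref{Corollary-The-separability-of-the-second-algebraic-family-of-hyperelliptic-curves}, and $-2(\gamma/\alpha)^2\equiv -2(27\cdot 17^{-1})^2\equiv -2\cdot 25\equiv 8\pmod{29}$, so that the excluded residue is $8\pmod{29}$, which defines $\A_2$. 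Combining parts (i) and (ii) with $(\ref{Equation-the-union-of-A1-and-A2-is-the-set-of-all-n-greater-than-5-and-not-equiv-to-zero-mod-4})$ covers all integers $n>5$ with $n\not\equiv 0\pmod 4$, which in particular re-establishes Theorem \ref{Theorem-The-main-theorem}.

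The main obstacle is establishing $(A3)$ and the $2$-adic and $29$-adic unit parts of $(A4)$ uniformly over \emph{all} rational $T$: one must know that the conic $\cQ_1$ attached to $(29,1,3,\D_1^{\ast}(\Gamma_1(T)),261,15)$ has a nontrivial rational point for every $T\in\bQ$ and that $\D_1^{\ast}(\Gamma_1(T))$ is a unit at $2$ and at $29$ for every $T$, with no rational value of $T$ left out. This is precisely what Lemmas \ref{infinitude-alpha-beta-gamma-lemma}--\ref{algebraic-family-hyperelliptic-curve-lemma} are built to provide; the delicate point is matching the explicit rational functions $\D_1,\Gamma_1$ of Example \ref{Example-The-first-rational-function-D1-related-to-the-construction-of-algebraic-families} (and $\D_2,\Gamma_2$ of Example \ref{Example-The-second-rational-function-D2-related-to-the-construction-of-algebraic-families}) to the abstract parametrization, so that the finitely many forbidden parameter values have all been cleared while the separability prime $31$ (respectively $11$) is accommodated at the same time. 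The remaining verifications are the finite congruence computations indicated above.
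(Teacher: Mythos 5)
Your proposal follows the paper's proof essentially step by step: identify the base sextuples $(29,1,3,7,261,15)$ and $(29,1,3,133,29,27)$, verify $(A1)$--$(A5)$ and $(B1)$ for them, invoke Lemmas \ref{infinitude-alpha-beta-gamma-lemma}--\ref{algebraic-family-hyperelliptic-curve-lemma} with $\S=\{2,29\}$ to parametrize admissible $\alpha$ by $\D_1^{\ast}(\Gamma_1(T))$ (resp.\ $\D_2^{\ast}(\Gamma_2(T))$), use the separability Corollaries \ref{Corollary-The-separability-of-the-first-algebraic-family-of-hyperelliptic-curves} and \ref{Corollary-The-separability-of-the-second-algebraic-family-of-hyperelliptic-curves}, compute the excluded residue $-2(\gamma/\alpha)^2\bmod 29$, and then apply Corollary \ref{hyperelliptic-curves-hp-corollary} or \ref{n-even-hyperelliptic-curve-hp-corollary} according to the parity class of $n$. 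Your explicit congruence checks $-20184\equiv 0\pmod{29^2}$, $50460\equiv 0\pmod{29^2}$, $-132\equiv -24\equiv 0\pmod 4$, and the residues $21$ and $8$ modulo $29$ are all correct and agree with the paper.

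The one place where your argument is thinner than the paper's is the verification of $(A3)$ for the base sextuples. You assert that the conic $29U^2 - V^2 - \beta_0P_0Q_0T^2=0$ "is everywhere locally solvable (as can be read off from the norm computations already made)" and hence has a primitive integer point with $u_0v_0t_0\neq 0$. This is plausible but not actually proved in your text: the cited norm computations concern $\cU$, not this conic directly, so a Hasse--Minkowski check for the conic is still owed. More importantly, the specific point $(u_0,v_0,t_0)$ is not merely an existence datum: the explicit rational functions $\C_1,\G_1,\F_1,\D_1$ (and hence $\D_1^{\ast}$ and the polynomial $\cP_{1,T}$ of $(\ref{Definition-The-definition-of-the-polynomial-P-1-T})$) are built from a particular choice of $(u_0,v_0,t_0)$ via $(\ref{eqn-C})$. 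The paper uses $(166257,3020031,2)$ for part (i) and $(728799,3613777,10)$ for part (ii); to claim that $\cC_{n,T,(29,1,3)}^{(7,261,15)}$ equals the curve $(\ref{eqn-hyperelliptic-curve-C})$ for this parametrized sextuple, you must use the same point, else the displayed coefficients of $\D_1^{\ast}(\Gamma_1(T))$ would not match $(\ref{Definition-The-rational-function-D-1-ast-of-Gamma-1})$. Supplying these explicit points (and then verifying, as the paper does, that the numerator and denominator of $\G_i(T)$ are irreducible over $\bQ$, so that $\Z_i=\emptyset$ in Lemma \ref{algebraic-family-hyperelliptic-curve-lemma}) closes the gap; apart from that, your proof is complete and takes the same route as the paper.
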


\begin{remark}
\label{Remark-The-above-theorem-implies-the-main-theorem-in-the-paper}

By $(\ref{Equation-the-union-of-A1-and-A2-is-the-set-of-all-n-greater-than-5-and-not-equiv-to-zero-mod-4})$ and Theorem \ref{algebraic-family-hyperelliptic-curve-hp-theorem}, we see that Theorem \ref{Theorem-The-main-theorem} follows immediately.

\end{remark}

\begin{proof}

Throughout the proof of Theorem \ref{algebraic-family-hyperelliptic-curve-hp-theorem}, we will use the same notation as in Theorem \ref{hyperelliptic-curves-theorem} and Lemma \ref{infinitude-alpha-beta-gamma-lemma}. We first prove that part $(i)$ of Theorem \ref{algebraic-family-hyperelliptic-curve-hp-theorem} holds.

Let $n$ be any integer such that $n \in \A_1$. Let $(p, b, d, \alpha_0, \beta_0, \gamma_0) = (29, 1, 3, 7, 261, 15)$. We see that $\bar{\beta_0} = 9$. One can check that the sextuple $(p, b, d, \alpha_0, \beta_0, \gamma_0)$ satisfies $(A1)-(A5)$ and $(B1)$. Indeed, $(A1), (A2), (A4)$, $(A5)$ and $(B1)$ are obvious. It remains to prove that $(p, b, d, \alpha_0, \beta_0, \gamma_0)$ satisfies $(A3)$. By $(\ref{eqn-P})$ and $(\ref{eqn-Q})$, we know that
\begin{align*}
P_0 &= 124613, \\
Q_0 &= -63945.
\end{align*}
The conic $\cQ_1^{(7, 261, 15)}$ in $(A3)$ of Theorem \ref{hyperelliptic-curves-theorem} defined by
\begin{equation*}
\cQ_1^{(7, 261, 15)} : 29U^2 - V^2 + 2079746732385T^2 = 0
\end{equation*}
has a point $(u_0, v_0, t_0) = (166257, 3020031, 2)$, and hence $(p, b, d, \alpha_0, \beta_0, \gamma_0)$ satisfies $(A3)$.

Let $\S := \{2, 29\}$, and let $\C_1(T)$ be the rational function in $\bQ(T)$ defined by the same equation $(\ref{eqn-C})$ of $C$ with $(0, T)$ in the role of $(A, B)$, that is,
\begin{align*}
\C_1(T) := \dfrac{- 2v_0T - 4p^3\alpha_0\beta_0t_0^2Q_0}{T^2 + 4p^5\beta_0t_0^2Q_0} = \dfrac{-6040062T + 45588900213360}{T^2 - 5477180725633680}.
\end{align*}
Let $\G_1(T) \in \bQ(T)$ be the rational function defined by
\begin{align}
\label{G1(T)-rational-function-eqn}
\G_1(T) = v_0 + T\C_1(T) = \frac{-3020031T^2 + 45588900213360T - 16541255584016208244080}{T^2 - 5477180725633680}.
\end{align}
Since the numerator and denominator of $\G_1(T)$ are irreducible polynomials over $\bQ$, the set $\Z_1$ of rational zeros and poles of $\G_1(T)$ is empty. Hence applying Lemma \ref{algebraic-family-hyperelliptic-curve-lemma} for the triple $\left(\S, \G_1(T), \Z_1 \right)$, we know that $\F_1(T)$ satisfies $(1)$ and $(2)$ in Lemma \ref{algebraic-family-hyperelliptic-curve-lemma}, where $\F_1(T)$ is the rational function defined by $(\ref{rational-function-F(t)-empty-case})$ in \textit{Case 2} of Lemma \ref{algebraic-family-hyperelliptic-curve-lemma} with $(p_0, \epsilon) = (3, 2)$ and $\left(\S, \G_1(T), \Z_1 \right)$ in the role of $(\S, \G(T), \Z)$, that is,
\begin{align*}
\F_1(T) := 1 + \dfrac{4\left(\prod_{l \in \S, l \ne 2}l \right)}{T^2 - p_0^2\epsilon} = 1 + \dfrac{116}{T^2 - 18} = \dfrac{T^2 + 98}{T^2 - 18}.
\end{align*}

Let $\Gamma_1(T) \in \bQ(T)$ be the rational function defined by $(\ref{Definition-The-definition-of-the-rational-function-Gamma1})$ in Example \ref{Example-The-first-rational-function-D1-related-to-the-construction-of-algebraic-families}. Recall that
\begin{align*}
\Gamma_1(T) := \dfrac{961}{T^2 - 75}.
\end{align*}
It is known that $\Gamma_1(T)$ is well-defined for each rational number $T \in \bQ$.

Let $(A, B) = (0, \F_1(\Gamma_1(T)))$. By Lemma \ref{algebraic-family-hyperelliptic-curve-lemma}, we know that $(0, \F_1(T))$ satisfies $(C1)$ and $(C2)$ in Lemma \ref{infinitude-alpha-beta-gamma-lemma} for each $T \in \bQ$. Thus it follows that $(A, B) = (0, \F_1(\Gamma_1(T)))$ satisfies $(C1)$ and $(C2)$ in Lemma \ref{infinitude-alpha-beta-gamma-lemma} for each $T \in \bQ$.

Let $\D_1(\Gamma_1(T))$ be the rational function in $\bQ(T)$ defined by the same equation $(\ref{eqn-C})$ of $C$ with $(0, \F_1(\Gamma_1(T)))$ in the role of $(A, B)$, that is,
\begin{align}
\label{D1-eqn-algebraic-family}
&\D_1(\Gamma_1(T)) := \C_1(\F_1(\Gamma_1(T))) \\
&= \frac{45588894173298(\Gamma_1(T))^4 -  1641200890885920(\Gamma_1(T))^2 + 14770814323798008}{-5477180725633679(\Gamma_1(T))^4 + 197178506122812676(\Gamma_1(T))^2 - 1774606555105302716}. \nonumber
\end{align}
Note that $(0, \F_1(\Gamma_1(T)))$ satisfies $(C1)$ and $(C2)$. Hence using the same arguments as in the proof of Lemma \ref{infinitude-alpha-beta-gamma-lemma}, one knows that $\D_1(\Gamma_1(T)) \in \bZ_{29}$ for each $T \in \bQ$.

We see that
\begin{align*}
u := u_0 + A\D_1(\Gamma_1(T)) = u_0 = 166257 \ne 0
\end{align*}
for each $T \in \bQ$. Furthermore it follows from part $(2)$ of Lemma \ref{algebraic-family-hyperelliptic-curve-lemma} that
\begin{align*}
v = v_0 + B\D_1(\Gamma_1(T)) = \G_1(\F_1(\Gamma_1(T)))
\end{align*}
is defined (that is, not infinity) and non-zero for each $T \in \bQ$. Hence $(0, \F_1(\Gamma_1(T)))$ satisfies $(C3)$ in Lemma \ref{infinitude-alpha-beta-gamma-lemma} for each $T \in \bQ$.

Set
\begin{align*}
\alpha &:= \alpha_0 + 2p^2\D_1(\Gamma_1(T)) = 7 + 1682\D_1(\Gamma_1(T)) = \D_1^{\ast}(\Gamma_1(T)), \\
\beta  &:= \beta_0 = 261, \\
\gamma &:= \gamma_0 = 15,
\end{align*}
where $\D_1^{\ast}(\Gamma_1(T))$ is defined by $(\ref{Definition-The-rational-function-D-1-ast-of-Gamma-1})$ in Example \ref{Example-The-first-rational-function-D1-related-to-the-construction-of-algebraic-families}. Recall from $(\ref{Definition-The-rational-function-D-1-ast-of-Gamma-1})$ in Example \ref{Example-The-first-rational-function-D1-related-to-the-construction-of-algebraic-families} that
\begin{align*}
\D_1^{\ast}(\Gamma_1(T)) = \frac{\Sigma_{1, 1}(T)}{\Sigma_{1, 2}(T)},
\end{align*}
where $\Sigma_{1, 1}(T), \Sigma_{1, 2}(T)$ are defined by $(\ref{Equation-Sigma-1-1})$, $(\ref{Equation-Sigma-1-2})$, respectively.

By Lemma \ref{infinitude-alpha-beta-gamma-lemma}, we know that $(\alpha, \beta, \gamma)$ satisfies $(A1)-(A5)$ and $(B1)$ for each $T \in \bQ$. By $(\ref{eqn-P})$ and $(\ref{eqn-Q})$, we know that
\begin{align*}
P &= 29(7 + 1682\D_1(\Gamma_1(T)))^2 + 123192 = 29(\D_1^{\ast}(\Gamma_1(T)))^2 + 123192, \\
Q &= Q_0 = -63945.
\end{align*}
It is not difficult to see that for each $T \in \bQ$, the curve $\cC_{n, T, (29, 1, 3)}^{(7, 261, 15)}$ defined in part $(i)$ of Theorem \ref{algebraic-family-hyperelliptic-curve-hp-theorem} is the smooth projective model of the affine curve defined by $(\ref{eqn-hyperelliptic-curve-C})$.

By Corollary \ref{Corollary-The-separability-of-the-first-algebraic-family-of-hyperelliptic-curves}, we know that $\cP_{1, T}(x)$ is separable for each $T \in \bQ$, and hence we deduce that condition $(S)$ in Theorem \ref{hyperelliptic-curves-theorem} is true. Since $\D_1(\Gamma_1(T)) \in \bZ_{29}$ for each $T \in \bQ$, we see that
\begin{align*}
-2\left(\dfrac{\gamma}{\alpha}\right)^2 \equiv 21 \pmod{29}.
\end{align*}
Since $n \in \A_1$, we deduce that
\begin{align*}
n \not\equiv -2\left(\dfrac{\gamma}{\alpha}\right)^2 \pmod{29},
\end{align*}
and thus $(A6)$ holds if $n$ is odd and $(B2)$ holds if $n \equiv 2 \pmod{4}$. By Corollary \ref{hyperelliptic-curves-hp-corollary} and Corollary \ref{n-even-hyperelliptic-curve-hp-corollary}, we deduce that for each $n \in \A_1$ and each $T \in \bQ$, $\cC_{n, T, (29, 1, 3)}^{(7, 261, 15)}$ satisfies HP1 and HP2.

We now prove that part $(ii)$ of Theorem \ref{algebraic-family-hyperelliptic-curve-hp-theorem} is true. We will use the same notation as in the proof of part $(i)$ as long as they do not cause any confusion. We will use the same arguments as in the proof of part $(i)$ to construct an algebraic family of hyperelliptic curves of genus $n$ satisfying HP1 and HP2 for each $n \in \A_2$.

Let $n$ be any integer such that $n \in \A_2$. Let $(p, b, d, \alpha_0, \beta_0, \gamma_0) = (29, 1, 3, 133, 29, 27)$. We see that $\bar{\beta_0} = 1$. One can check that the sextuple $(p, b, d, \alpha_0, \beta_0, \gamma_0)$ satisfies $(A1)-(A5)$ and $(B1)$. Indeed $(A1), (A2), (A4), (A5)$ and $(B1)$ are obvious. It remains to prove that the sextuple satisfies $(A3)$. By $(\ref{eqn-P})$ and $(\ref{eqn-Q})$, we know that
\begin{align*}
P_0 &= 472381, \\
Q_0 &=  1711.
\end{align*}
The conic $\cQ_1^{(133, 29, 27)}$ in $(A3)$ of Theorem \ref{hyperelliptic-curves-theorem} defined by
\begin{equation*}
\cQ_1^{(133, 29, 27)} : 29U^2 - V^2 - 23439072839T^2 = 0
\end{equation*}
has a point $(u_0, v_0, t_0) = (728799, 3613777, 10)$, and thus $(p, b, d, \alpha_0, \beta_0, \gamma_0)$ satisfies $(A3)$.

Let $\S := \{2, 29\}$, and let $\C_2(T) \in \bQ(T)$ be the rational function defined by the same equation $(\ref{eqn-C})$ of $C$ with $(0, T)$ in the role of $(A, B)$, that is,
\begin{align*}
\C_2(T) = \dfrac{- 2v_0T - 4p^3\alpha_0\beta_0t_0^2Q_0}{T^2 + 4p^5\beta_0t_0^2Q_0} = \dfrac{-7227554T - 64380394481200}{T^2 + 407097080892400}.
\end{align*}
Let $\G_2(T) \in \bQ(T)$ be the rational function defined by
\begin{equation}
\label{G2(T)-rational-function-eqn}
\G_2(T) = v_0 + T\C_2(T) = \frac{-3613777T^2 - 64380394481200T + 1471158067696094594800}{T^2 + 407097080892400}.
\end{equation}
Since the numerator and denominator of $\G_2(T)$ are irreducible polynomials over $\bQ$, the set $\Z_2$ of rational zeroes and poles of $\G_2(T)$ is empty. Hence applying Lemma \ref{algebraic-family-hyperelliptic-curve-lemma} for the triple $\left(\S, \G_2(T), \Z_2 \right)$, we know that $\F_2(T)$ satisfies $(1)$ and $(2)$ in Lemma \ref{algebraic-family-hyperelliptic-curve-lemma}, where $\F_2(T)$ is the rational function defined by $(\ref{rational-function-F(t)-empty-case})$ in \textit{Case 2} of Lemma \ref{algebraic-family-hyperelliptic-curve-lemma} with $(p_0, \epsilon) = (3, 2)$ and $\left(\S, \G_2(T), \Z_2 \right)$ in the role of $(\S, \G(T), \Z)$, that is,
\begin{align*}
\F_2(T) := 1 + \dfrac{4\left(\prod_{l \in \S, l \ne 2}l \right)}{T^2 - p_0^2\epsilon} = 1 + \dfrac{116}{T^2 - 18} = \dfrac{T^2 + 98}{T^2 - 18}.
\end{align*}

Let $\Gamma_2(T) \in \bQ(T)$ be the rational function defined by $(\ref{Definition-The-definition-of-the-rational-function-Gamma2})$ in Example \ref{Example-The-second-rational-function-D2-related-to-the-construction-of-algebraic-families}. Recall that
\begin{align*}
\Gamma_2(T) := \dfrac{121}{T^2 - 63}.
\end{align*}
It is known that $\Gamma_2(T)$ is well-defined for each rational number $T \in \bQ$.

Let $(A, B) = (0, \F_2(\Gamma_2(T)))$. By Lemma \ref{algebraic-family-hyperelliptic-curve-lemma}, we know that $(0, \F_2(T))$ satisfies $(C1)$ and $(C2)$ in Lemma \ref{infinitude-alpha-beta-gamma-lemma} for each $T \in \bQ$. Thus it follows that $(A, B) = (0, \F_2(\Gamma_2(T)))$ satisfies $(C1)$ and $(C2)$ in Lemma \ref{infinitude-alpha-beta-gamma-lemma} for each $T \in \bQ$.

Let $\D_2(\Gamma_2(T))$ be the rational function in $\bQ(T)$ defined by the same equation $(\ref{eqn-C})$ of $C$ with $(0, \F_2(\Gamma_2(T)))$ in the role of $(A, B)$, that is,
\begin{align}
\label{D2-eqn-algebraic-family}
&\D_2(\Gamma_2(T)) := \C_2(\F_2(\Gamma_2(T))) \\
&= \frac{-64380401708754 (\Gamma_2(T))^4 + 2317693623118880 (\Gamma_2(T))^2 - 20859235062503544}{407097080892401 (\Gamma_2(T))^4 - 14655494912126204 (\Gamma_2(T))^2 + 131899454209147204}. \nonumber
\end{align}
Note that $(0, \F_2(\Gamma_2(T)))$ satisfies $(C1)$ and $(C2)$. Hence using the same arguments as in the proof of Lemma \ref{infinitude-alpha-beta-gamma-lemma}, one knows that $\D_2(\Gamma_2(T)) \in \bZ_{29}$ for each $T \in \bQ$.

We see that
\begin{align*}
u := u_0 + A\D_1(\Gamma_1(T)) = u_0 = 728799 \ne 0
\end{align*}
for each $T \in \bQ$. Furthermore it follows from $(2)$ of Lemma \ref{algebraic-family-hyperelliptic-curve-lemma} that
\begin{align*}
v = v_0 + B\D_2(\Gamma_2(T)) = \G_2(\F_2(\Gamma_2(T)))
\end{align*}
is defined (that is, not infinity) and non-zero for each $T \in \bQ$. Hence $(0, \F_2(\Gamma_2(T)))$ satisfies $(C3)$ in Lemma \ref{infinitude-alpha-beta-gamma-lemma} for each $T \in \bQ$.

Set
\begin{align*}
\alpha &:= \alpha_0 + 2p^2\D_2(\Gamma_2(T)) = 133 + 1682\D_2(\Gamma_2(T)) = \D_2^{\ast}(\Gamma_2(T)), \\
\beta  &:= \beta_0 = 29, \\
\gamma &:= \gamma_0 = 27,
\end{align*}
where $\D_2^{\ast}(\Gamma_2(T))$ is defined by $(\ref{Definition-The-rational-function-D-2-ast-of-Gamma-2})$ in Example \ref{Example-The-second-rational-function-D2-related-to-the-construction-of-algebraic-families}. Recall from $(\ref{Definition-The-rational-function-D-2-ast-of-Gamma-2})$ in Example \ref{Example-The-second-rational-function-D2-related-to-the-construction-of-algebraic-families} that
\begin{align*}
\D_2^{\ast}(\Gamma_2(T)) = \frac{\Sigma_{2, 1}(T)}{\Sigma_{2, 2}(T)},
\end{align*}
where $\Sigma_{2, 1}(T), \Sigma_{2, 2}(T)$ are defined by $(\ref{Equation-Sigma-2-1})$, $(\ref{Equation-Sigma-2-2})$, respectively.

By Lemma \ref{infinitude-alpha-beta-gamma-lemma}, we know that $(\alpha, \beta, \gamma)$ satisfies $(A1)-(A5)$ and $(B1)$ for each $T \in \bQ$. By $(\ref{eqn-P})$ and $(\ref{eqn-Q})$, we know that
\begin{align*}
P &= 29(133 + 1682\D_2(T))^2 - 40600 = 29(\D_2^{\ast}(\Gamma_2(T)))^2 - 40600, \\
Q &= Q_0 = 1711.
\end{align*}
It is not difficult to see that for each $T \in \bQ$, the curve $\cC_{n, T, (29, 1, 3)}^{(133, 29, 27)}$ defined in part $(ii)$ of Theorem \ref{algebraic-family-hyperelliptic-curve-hp-theorem} is the smooth projective model of the affine curve defined by $(\ref{eqn-hyperelliptic-curve-C})$.

By Corollary \ref{Corollary-The-separability-of-the-second-algebraic-family-of-hyperelliptic-curves}, we know that $\cP_{2, T}(x)$ is separable for each $T \in \bQ$, and hence we deduce that condition $(S)$ in Theorem \ref{hyperelliptic-curves-theorem} is true. Since $\D_2(\Gamma_2(T)) \in \bZ_{29}$ for each $T \in \bQ$, we see that
\begin{align*}
-2\left(\dfrac{\gamma}{\alpha}\right)^2 \equiv 8 \pmod{29}.
\end{align*}
Since $n \in \A_2$, we deduce that
\begin{align*}
n \not\equiv -2\left(\dfrac{\gamma}{\alpha}\right)^2 \pmod{29},
\end{align*}
and thus $(A6)$ holds if $n$ is odd, and $(B2)$ holds if $n \equiv 2 \pmod{4}$. By Corollary \ref{hyperelliptic-curves-hp-corollary} and Corollary \ref{n-even-hyperelliptic-curve-hp-corollary}, we deduce that for each $n \in \A_2$ and each $T \in \bQ$, $\cC_{n, T, (29, 1, 3)}^{(133, 29, 27)}$ satisfies HP1 and HP2.

\end{proof}

\begin{remark}
\label{Remark-Existence-of-algebraic-families-of-curves-of-genus-n-with-n-divisible-by-4}

It seems that the approach used in this paper to construct algebraic families of hyperelliptic curves of genus $n$ violating the Hasse principle does not work if $n \equiv 0 \pmod{4}$. The reason is that the hyperelliptic curves defined in Theorem \ref{hyperelliptic-curves-theorem} might fail to be locally solvable at $2$ if the genus of these curves is divisible by four, which in turn comes from the fact that the hyperelliptic curves in Theorem \ref{hyperelliptic-curves-theorem} lie on the threefolds $\cY$ in Theorem \ref{threefold-hp-theorem} and the local Azumaya invariant at $2$ of the threefolds $\cY$ is $1/2$.

In \cite{dq-Algebraic-families-of-curves-with-n-divisible-by-4}, the author constructs threefolds parameterized by the primes $p \equiv 1 \pmod{8}$ that are counterexamples to the Hasse principle. Furthermore it is shown in \cite{dq-Algebraic-families-of-curves-with-n-divisible-by-4} that there exists an odd prime $q$ such that the local Azumaya algebra at $q$ of the threefolds is $1/2$ and the local Azumaya algebra invariant at $l$ of the threefolds is zero for any prime $l \ne q$. For any hyperelliptic curve $\cC$ of genus $n$ lying on the threefolds in \cite{dq-Algebraic-families-of-curves-with-n-divisible-by-4}, since the local Azumaya algebra invariant at $2$ of the threefolds is zero, the local solvability at $2$ of $\cC$ should not depend on whether $n$ is divisible by $4$. Hence using the approach in this paper, it seems promising that there should exist algebraic families of curves of arbitrary genus violating the Hasse principle. The author hopes to finish this work in the near future.

\end{remark}

\section{Appendix}

\subsection{Proof of Lemma \ref{bright-corn-lemma}}
\label{Subsection-Proof-of-the-Bright-Corn-Lemma}

In this subsection, we sketch a proof of Lemma \ref{bright-corn-lemma}. The proof presented here follows closely from \cite[Proposition 4.17]{bright-Thesis} and \cite[Proposition 2.2.3]{corn-PLMS}. We begin by recalling some facts about the Tate cohomology groups.

Let $G$ be a finite group, and let $A$ be a $G$-module. The \textit{Tate cohomology groups $\widehat{H}^n(G, A)$ with negative and positive exponents} \cite[Chapter VIII]{serre-Local-Fields} are defined by
\begin{align*}
\widehat{H}^n(G, A) :=
\begin{cases}
H^n(G, A) \; &\text{if $n \ge 1$,}\\
A^G/NA \; &\text{if $n = 0$,} \\
_{N}A/I_GA \; &\text{if $n = -1$,} \\
H_{n - 1}(G, A) \; &\text{if $n \le - 2$,}
\end{cases}
\end{align*}
where $A^G$ is the submodule of $A$ consisting of the elements fixed by $G$, $N : A \rightarrow A$ is the endomorphism given by $Na = \sum_{g \in G}ga$ for all $a \in A$, $_{N}A$ is the kernel of $N$, and $I_G$ is the \textit{augmentation ideal} of the group algebra $\bZ[G]$, i.e., it consists of all linear combinations of the elements $g - 1$ with $g \in G$.

We now proceed to prove Lemma \ref{bright-corn-lemma}. We have the exact sequence
\begin{align}
\label{Equation-The-exact-sequence-of-Div-and-Pic}
0 \rightarrow F(\cX_L)^{\times}/L^{\times} \rightarrow \text{Div}(\cX_L) \rightarrow \text{Pic}(\cX_L) \rightarrow 0,
\end{align}
where $\cX_L = \cX \times_{F} L$ and $F(\cX_L)$ denotes the function field of $\cX_L$. Let $\text{Gal}(L/F)$ denote the Galois group of the extension $L/F$. It is well-known \cite[Corollary to Proposition 6, page 133]{serre-Local-Fields} that the Tate cohomology groups with exponents $n$ depend only on the parity of $n$. Hence it follows from the exact sequence $(\ref{Equation-The-exact-sequence-of-Div-and-Pic})$ that we have a commutative diagram
\begin{align}
\label{Equation-The-commutative-diagram-of-Div-and-Pic}
\begin{CD}
\widehat{H}^{-1}(\text{Gal}(L/F), \text{Pic}(\cX_L)) @>>> \widehat{H}^{0}(\text{Gal}(L/F), F(\cX_L)^{\times}/L^{\times})    @>>>  \widehat{H}^{0}(\text{Gal}(L/F), \text{Div}(\cX_L))     \\
@VVV             @VVV       @VVV      \\
H^{1}(\text{Gal}(L/F), \text{Pic}(\cX_L))  @>>> H^{2}(\text{Gal}(L/F), F(\cX_L)^{\times}/L^{\times})     @>>> H^{2}(\text{Gal}(L/F), \text{Div}(\cX_L)),
\end{CD}
\end{align}
where the vertical arrows are isomorphisms. By \cite[Chapter V]{deuring-Algebra}, we know that the cyclic algebra $(L/F, f)$ corresponds to the element $f \in H^{2}(\text{Gal}(L/F), F(\cX_L)^{\times}/L^{\times})$, and it thus follows from the diagram $(\ref{Equation-The-commutative-diagram-of-Div-and-Pic})$ that $(L/F, f)$ corresponds to the element $f \in \widehat{H}^{0}(\text{Gal}(L/F), F(\cX_L)^{\times}/L^{\times})$.

By the definition of the Tate cohomology groups, we see that
\begin{align*}
\widehat{H}^{0}(\text{Gal}(L/F), \text{Div}(\cX_L)) = \text{Div}(\cX)/N \text{Div}(\cX_L).
\end{align*}
Thus our contentions follow immediately from the commutative diagram $(\ref{Equation-The-commutative-diagram-of-Div-and-Pic})$.

\section*{Acknowledgements}

I was supported by a postdoctoral fellowship in the Department of Mathematics at University of British Columbia during the time when the paper was prepared.

\end{document}